  \newtheorem{thm}{Theorem}[section]
  \newtheorem{lem}[thm]{Lemma}
  \newtheorem{prop}[thm]{Proposition}
  \newtheorem{cor}[thm]{Corollary}
  \theoremstyle{definition}
  \newtheorem{defn}[thm]{Definition}
  \newtheorem{exm}[thm]{Example}
  \newtheorem{rmk}[thm]{Remark}
 \newcommand\ra{\rightarrow}
 \newcommand\mI{\mathcal{I}}
 \newcommand\s{\subseteq}
  \newcommand\lam{\lambda}
\newcommand{\Rad}{\mbox{\rm Rad}}
\newcommand{\Ker}{\mbox{\rm Ker}}
 \numberwithin{equation}{section}
\begin{document}
\title{The Loomis--Sikorski Theorem for $EMV$-algebras }
\author{ Anatolij Dvure\v{c}enskij$^{^{1,2}}$, Omid Zahiri$^{^{3}}$
\\
{\small\em $^1$Mathematical Institute,  Slovak Academy of Sciences, \v Stef\'anikova 49, SK-814 73 Bratislava, Slovakia} \\
{\small\em $^2$Depart. Algebra  Geom.,  Palack\'{y} Univer. 17. listopadu 12, CZ-771 46 Olomouc, Czech Republic} \\
{\small\em  $^{3}$University of Applied Science and Technology, Tehran, Iran}\\
{\small\tt  dvurecen@mat.savba.sk\quad   zahiri@protonmail.com} }
\date{}
\maketitle

{\small {\it AMS Mathematics Subject Classification (2010)}:  06C15, 06D35 }

{\small {\it Keywords:} $MV$-algebra, idempotent element, $EMV$-algebra, $\sigma$-complete $EMV$-algebra, $EMV$-clan, $EMV$-tribe, state-morphism, ideal, filter, hull-kernel topology, the Loomis--Sikorski Theorem
 }

{\small {\it Acknowledgement:} AD is thankful for the support by the grants APVV-16-0073, VEGA No. 2/0069/16 SAV and GA\v{C}R 15-15286S  }

\begin{abstract}
Recently, in \cite{DvZa}, we have introduced $EMV$-algebras which resemble $MV$-algebras but the top element is not guaranteed for them. For $\sigma$-complete $EMV$-algebras, we prove an analogue of the Loomis--Sikorski Theorem showing that every $\sigma$-complete $EMV$-algebra is a $\sigma$-homomorphic image of an $EMV$-tribe of fuzzy sets where all algebraic operations are defined by points. To prove it, some topological properties of the state-morphism space  and the space of maximal ideals are established.
\end{abstract}

\section{Introduction}

Boolean algebras are well-known structures studied over many decades. They describe an algebraic semantics for two-valued logic. In Thirties, there appeared Boolean rings, or equivalently, generalized Boolean
algebras, which have almost Boolean features, but top element is not
assumed. For such structures, Stone, see e.g. \cite[Thm 6.6]{LuZa}, developed a representation of Boolean rings by rings of subsets, and also some logical models with such an incomplete information were established, see \cite{Sto1,Sto2}.

Our approach in \cite{DvZa} was based on analogous ideas: Develop a \L ukasiewicz type algebraic structure with incomplete total information, i.e. find an algebraic semantics very similar to $MV$-algebras with incomplete information, which however in local sense is complete: Conjunctions and disjunctions exist, negation only in local sense, i.e. negation of $a$ in $b$ exists whenever $a\le b$ but total negation of the event $a$ is not assumed.
For such ideas we have introduced in \cite{DvZa} $EMV$-algebras which are locally close to $MV$-algebras, however, the top element is not assumed.

The basic representation theorem says, \cite[Thm 5.21]{DvZa}, that even in such a case, we can find an $MV$-algebra where the original algebra can be embedded as its maximal ideal, i.e. an incomplete information hidden in an $EMV$-algebra is sufficient to find a \L ukasiewicz logical system where top element exists and where all original statements are valid.

Of course, every $MV$-algebra is an $EMV$-algebra ($EMV$-algebras stand for extended $MV$-algebras), and $EMV$-algebras generalize Chang's $MV$-algebras, \cite{Cha}. Nowadays $MV$-algebras have many important applications in different areas of mathematics and logic. Therefore, $MV$-algebras have many different generalizations, like $BL$-algebras, pseudo $MV$-algebras, \cite{georgescu, Dvu2}, $GMV$-algebras in the realm of residuated lattices, \cite{Tsinakis}, etc. In the last period $MV$-algebras are studied also in frames of involutive semirings, see \cite{DiRu}. The presented $EMV$-algebras are another kind of generalizations of $MV$-algebras inspired by Boolean rings.

We note that for $\sigma$-complete $MV$-algebras, a variant of the Loomis--Sikorski Theorem was established in \cite{Mun4,Dvu5,BaWe}. It was shown that, for every $\sigma$-complete $MV$-algebra $M$, there is a tribe of fuzzy sets, which is a $\sigma$-complete $MV$-algebra of $[0,1]$-valued functions with all $MV$-operations defined by points, that can be $\sigma$-homomorphically embedded onto $M$.

The aim of the present paper is to formulated and prove a Loomis--Sikorski type theorem for $\sigma$-complete $EMV$-algebras showing that every $\sigma$-complete $EMV$-algebra is a $\sigma$-homomorphic image of an $EMV$-tribe of fuzzy sets, where all $EMV$-operations are defined by points. To show this, we introduce the hull-kernel topology of the maximal ideals of $EMV$-algebras and the weak topology of state-morphisms which are $EMV$-homomorphisms from the $EMV$-algebra into the $MV$-algebra of the real interval $[0,1]$, or equivalently, a variant of extremal probability measures.

The paper is organized as follows. Section 2 gathers the main notions and results on $EMV$-algebras showing that every $EMV$-algebra without top element can be embed into an $MV$-algebra as its maximal ideal.  Dedekind $\sigma$-complete $EMV$-algebras are studied in Section 3 where also some one-to-one relationships among maximal ideals, maximal filters and state-morphisms are established. In Section 4 we introduce the weak topology of state-morphisms and the hull-kernel topology of maximal ideals.  We show that these spaces are always mutually homeomorphic, locally compact Hausdorff spaces which are compact if and only if the $EMV$-algebra possesses the top element. We prove that if our $EMV$-algebra has no top element, then the state-morphism space of the representing $MV$-algebra is the one-point compactification of the state-morphism space of the original $EMV$-algebra. The Loomis--Sikorski representation theorem will be established in Section 5 together with some topological properties of the state-morphism space and the space of the maximal ideals.

\section{Elements of $EMV$-algebras}

An $MV$-{\it algebra} is an algebra $(M;\oplus,^*,0,1)$ (henceforth write simply $M=(M;\oplus,^*,0,1)$) of type $(2,1,0,0)$, where $(M;\oplus,0)$ is a
commutative monoid with the neutral element $0$ and for all $x,y\in M$, we have:
\vspace{1mm}
\begin{enumerate}[nolistsep]
	\item[(i)] $x^{**}=x$;
	\item[(ii)] $x\oplus 1=1$;
	\item[(iii)] $x\oplus (x\oplus y^*)^*=y\oplus (y\oplus x^*)^*$.
\end{enumerate}
\vspace{1mm}

\noindent
In any $MV$-algebra $(M;\oplus,^*,0,1)$, we can define also the following  operations:
\[
x\odot y:=(x^*\oplus y^*)^*,\quad x\ominus y:=(x^*\oplus y)^*.
\]

Then $M$ is a distributive lattice where
$x\vee y=(x\ominus y)\oplus y$ and $x\wedge y=x\odot (x^*\oplus y)$. Note that, for each $x\in M$, $x^*$ is the least element
of the set $\{z\in M\mid x\oplus z=1\}$, i.e.

\begin{equation}\label{eq:MV}
x^*:=\min\{z \in M \mid z\oplus x = 1\}.
\end{equation}
For example, if $(G,u)$ is an Abelian unital $\ell$-group with strong unit $u$, then the interval $[0,u]$ can be converted into an $MV$-algebra as follows: $x\oplus y := (x+y)\wedge u$, $x^*:=u-x$ for all $x,y \in [0,u]$. Then $\Gamma(G,u):=([0,u];\oplus, ^*,0,u)$ is an $MV$-algebra and due to the Mundici result, every $MV$-algebra is isomorphic to some $\Gamma(G,u)$, see \cite{Mun}. For more info about $MV$-algebras, see \cite{CDM}.

An element $a\in M$ is said to be {\it Boolean} or {\it idempotent} if $a\oplus a = a$, or equivalently, $a\vee a^*=1$. The set $B(M)$ of Boolean elements of $M$ forms a Boolean algebra.

Given $a\in B(M)$, we can define a new $MV$-algebra $M_a$ whose universe is the interval $[0,a]$ and the $MV$-operations are inherited from the original one as follows: $M_a=([0,a]; \oplus, ^{*_a},0,a)$, where $x^{*_a}=a\odot x^*$ for each $x \in [0,a]$.
Then we have
$$
x^{*_a}= \min\{z \in [0,a]\colon z\oplus x = a\}, \quad x \in [0,a].
$$
In the paper, we will write also $\lambda_a(x):= x^{*_a}$, $x\in [0,a]$,

Inspired by these properties of $MV$-algebras, in\cite{DvZa}, we have introduced $EMV$-algebras as follows. Let $(M;\oplus,0)$ be a commutative monoid with a neutral element $0$. An element $a \in M$ is said to be an {\it idempotent} if $a\oplus a=a$. We denote by $\mathcal I(M)$ the set of idempotent elements of $M$; clearly $0 \in \mathcal I(M)$, and if $a,b \in I(M)$, then $a\oplus b \in \mathcal I(M)$.

According to \cite{DvZa}, an $EMV$-{\it algebra} is an algebra $(M;\vee,\wedge,\oplus,0)$ of type $(2,2,2,0)$ such that
\begin{enumerate}[nolistsep]
\item[(i)] $(M;\oplus,0)$ is a commutative monoid with a neutral element $0$;
\item[(ii)] $(M;\vee,\wedge,0)$ is a distributive lattice with the bottom element $0$;
\item[(iii)] for each idempotent $a \in \mathcal I(M)$, the algebra $([0,a];\oplus, \lambda_a,0,a)$ is an $MV$-algebra;

\item[(iv)] for each $x \in M$, there is an idempotent $a$ of $M$ such that $x\le a$.
\end{enumerate}
We notify that according to (\ref{eq:MV}), we have for each $a\in \mathcal I(M)$
$$
\lambda_a(x)=\min\{z \in [0,a]\mid z\oplus x = a\}, \quad x\in [0,a].
$$
We note that the existence of a top element in an $EMV$-algebra is not assumed, and if it exists, then $M=(M;\oplus, \lambda_1,0,1)$ is an $MV$-algebra. We note that every $MV$-algebra forms an $EMV$-algebra, every generalized Boolean algebra (or equivalently Boolean ring) is an $EMV$-algebra.

Besides the operation $\oplus$ we can define an operation $\odot$ as follows: Let $x,y \in M$ and let $x,y\le a\in \mathcal I(M)$. Then
$$x\odot y :=\lambda_a(\lambda_a(x)\oplus \lambda_a(y)).
$$
As it was shown in \cite[Lem 5.1]{DvZa}, the operation $\odot$ does not depend on $a\in \mathcal I(M)$. Then we have:
If $x,y \in [0,a]$ for some idempotent $a\in M$, then
\begin{equation}\label{eq:x<y2}
x\odot \lambda_a(y)=x\odot \lambda_a(x\wedge y) \quad \mbox{ and }\quad  x=(x\wedge y) \oplus (x\odot \lambda_a(y)).
\end{equation}

For any integer $n\ge 1$ and any $x$ of an $EMV$-algebra $M$, we can define
$$0.x=0,\quad 1.x = x,\quad (n+1).x= (n.x)\oplus x,$$
and
$$ x^1 =1, \quad x^n=x^{n-1}\odot x, \ n\ge 2,$$
and if $M$ has a top element $1$, we define also $x^0=1$.

We define the classical notions like ideal: An {\it ideal} of an $EMV$-algebra is a non-void subset $I$ of $M$ such that (i) if $x\le y \in I$, then $x\in I$, and (ii) if $x,y \in I$, then $x\oplus y$. An ideal is {\it maximal} if it is a proper ideal of $M$ which is not properly contained in another proper ideal of $M$. Nevertheless $M$ has not necessarily a top element, every $M\ne \{0\}$ has a maximal ideal, see \cite[Thm 5.6]{DvZa}. We denote by $\mathrm{MaxI}(M)$ the set of maximal ideals of $M$. The {\it radical} $\mbox{\rm Rad}(M)$ of $M$, is the intersection of all maximal ideals of $M$, and for it we have
\begin{equation}\label{eq:Rad}
\Rad(M)=\{x\in M\mid x\neq 0,\ \exists\, a\in\mI(M): x\leq a\ \& \ \ (n.x\leq \lam_a(x),\  \forall\, n\in\mathbb{N})\}\cup\{0\}.
\end{equation}

A {\it filter} is a dual notion to ideals, i.e. a non-void subset $F$ of $M$ such that (i) $x\ge y \in F$ implies $x \in F$, and (ii) if $x,y \in F$, then $x\odot y \in F$.

A subset $A\s M$ is called an $EMV$-{\it subalgebra} of $M$ if $A$ is closed
under $\vee$, $\wedge$, $\oplus$ and $0$ and, for each $b\in \mI(M)\cap A$, the set $[0,b]_A:=[0,b]\cap A$ is a subalgebra
of the $MV$-algebra $([0,b];\oplus,\lam_b,0,b)$.  Clearly, the last condition is equivalent to the following condition:
$$\forall\, b\in A\cap \mI(M),\quad \forall\, x\in [0,b]_A,\ \ \min\{z\in [0,b]_A\mid x\oplus z=b\}=\min\{z\in [0,b]\mid x\oplus z=b\},
$$
or equivalently, $x\in [0,b]\cap A$ implies $\lambda_b(x)\in [0,b]\cap A$ whenever $b \in A\cap \mathcal I(M)$.
Let $(M_1;\vee,\wedge,\oplus,0)$ and $(M_2;\vee,\wedge,\oplus,0)$ be $EMV$-algebras. A map $f:M_1\ra M_2$ is called an $EMV$-{\it homomorphism}
if $f$ preserves the operations $\vee$, $\wedge$, $\oplus$ and $0$, and for each $b\in\mI(M_1)$ and for each $x\in [0,b]$, $f(\lam_b(x))= \lam_{f(b)}(f(x))$.

As it was said, it can happen that an $EMV$-algebra $M$ has no top element, however, it can be embedded into an $MV$-algebra $N$ as its maximal ideal as it was proved in \cite[Thm 5.21]{DvZa}:

\begin{thm}\label{th:embed}{\rm [Basic Representation Theorem]}
Every $EMV$-algebra $M$ is either an $MV$-algebra or $M$ can be embedded into an $MV$-algebra $N$ as a maximal ideal of $N$ such that every element $x\in N$ either belongs to the image of the embedding of $M$, or it is a complement of some element $x_0$ belonging to the image of the embedding of $M$, i.e. $x=\lambda_1(x_0)$.
\end{thm}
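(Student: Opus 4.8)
The plan is to construct the enveloping $MV$-algebra $N$ explicitly and then verify that $M$ embeds into it as a maximal ideal with the stated complementation property. The natural candidate for $N$ is built from the monoid/lattice structure of $M$ by formally adjoining "complements" of the elements of $M$. Concretely, I would take $N$ to be the disjoint union $M \cup \overline{M}$, where $\overline{M}$ is a copy of $M$ whose elements I write as $\lambda_1(x_0)$ with $x_0 \in M$, thinking of them as the still-missing total negations. The top element $1$ is then $\lambda_1(0)$, and the involution is forced: $\lambda_1(\lambda_1(x_0))=x_0$ for $x_0 \in M$, so $^*$ swaps the two copies. The first task is to define $\oplus$, $\wedge$, $\vee$ on all of $N$ so that $(N;\oplus,^*,0,1)$ satisfies the $MV$-axioms (i)--(iii).

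**The key steps**, in order, are as follows. First, I would fix the definitions of the operations on the three possible cases for a pair of elements of $N$: both in $M$, both in $\overline{M}$, or one in each. On $M\times M$ the operation $\oplus$ is already given; on a mixed pair $x\oplus\lambda_1(y_0)$ one uses the idea that $\lambda_1$ should behave like $1\ominus(\cdot)$, so the value should be $\lambda_1\big(y_0\odot\lambda_a(x\wedge y_0)\big)$ or a similar expression computed inside a common block $[0,a]$ with $a\in\mathcal I(M)$ an idempotent bounding the relevant elements --- here (\ref{eq:x<y2}) is exactly the identity that lets these local computations be patched consistently. Second, I would prove that these definitions are well defined, i.e. independent of the choice of the bounding idempotent $a$; this is where \cite[Lem 5.1]{DvZa} (the $a$-independence of $\odot$) and the compatibility identities for $\lambda_a$ are invoked repeatedly. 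Third, I would verify axioms (i)--(iii) of an $MV$-algebra for $N$, checking the commutative-monoid law for $\oplus$ and the two \L ukasiewicz identities by reducing every instance to a computation inside a single $MV$-block $[0,a]$, where they already hold.

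**The embedding and maximality** come last. The inclusion $\iota:M\hookrightarrow N$ is clearly injective and preserves $\vee,\wedge,\oplus,0$; it preserves the local negations because $\lambda_a$ on $[0,a]\s M$ agrees with $a\odot(\cdot)^*$ computed in $N$, which follows once the involution $^*$ on $N$ is in place. To see that $\iota(M)$ is a maximal ideal, I would show that $\iota(M)=\{x\in N\mid x^*\notin \iota(M)\}$ is downward closed and closed under $\oplus$ (an ideal), and that it is maximal because $N/\iota(M)$ is, via the involution, a two-element structure --- equivalently, every $x\in N\setminus\iota(M)$ is of the form $\lambda_1(x_0)$ with $x_0\in\iota(M)$, so adjoining any such $x$ forces $1=x\oplus x^*\in$ the generated ideal. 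The trichotomy stated in the theorem (either $M$ is already an $MV$-algebra, i.e. has a top element, or $N=\iota(M)\cup\lambda_1(\iota(M))$) then falls out of the construction, since by axiom (iv) every element of $M$ lies under some idempotent and the only elements of $N$ not in $\iota(M)$ are the adjoined complements.

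**The main obstacle** I expect is the well-definedness and associativity of $\oplus$ on the mixed and double-complement cases: defining $\lambda_1(x_0)\oplus\lambda_1(y_0)$ (both complements) requires showing the result again lands correctly in $N$ and that the outcome does not depend on the idempotents chosen to carry out the partial computations. Associativity across the three blocks, in particular, forces one to glue together several local $MV$-identities, and this is the delicate, calculation-heavy heart of the argument; everything else is comparatively routine once $N$ is known to be an $MV$-algebra.
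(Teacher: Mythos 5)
This paper does not actually prove Theorem \ref{th:embed}: it is imported verbatim from \cite[Thm 5.21]{DvZa}, so there is no in-paper argument to measure your proposal against. That said, your plan is the intended construction --- the $MV$-unitization $N=M\cup\lambda_1(M)$ obtained by formally adjoining complements, the exact analogue of passing from a Boolean ring to a Boolean algebra (cf.\ \cite[Thm 2.2]{CoDa}) --- and it is sound in outline: the mixed-case formula $x\oplus\lambda_1(y_0)=\lambda_1\bigl(y_0\odot\lambda_a(x\wedge y_0)\bigr)$ is consistent with (\ref{eq:x<y2}), the double-complement case is forced to be $\lambda_1(x_0)\oplus\lambda_1(y_0)=\lambda_1(x_0\odot y_0)$, independence of the bounding idempotent is exactly \cite[Lem 5.1]{DvZa}, and maximality of the image follows because any ideal of $N$ containing some $\lambda_1(x_0)$ also contains $x_0\in M$ and hence $\lambda_1(x_0)\oplus x_0=1$. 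You correctly locate the labour (well-definedness and the verification of the monoid law and axiom (iii) across the three block configurations), and since each deferred step reduces to an identity holding inside a single $MV$-block $[0,a]$, there is no gap of principle --- only the case analysis you explicitly postpone.
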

The $MV$-algebra $N$ from the latter theorem is said to be {\it representing} the $EMV$-algebra $M$. A similar result for generalized Boolean algebras was established in \cite[Thm. 2.2]{CoDa}.

A mapping $s:M \to [0,1]$ is a {\it state-morphism} if $s$ is an $EMV$-homomorphism from $M$ into the $EMV$-algebra of the real interval $[0,1]$ such that there is an element $x\in M$ with $s(x)=1$. We denote by $\mathcal{SM}(M)$ the set of state-morphisms on $M$. In \cite[Thm 4.2]{DvZa} it was shown that if $M\ne \{0\}$, $M$ admits at least one state-morphism. In addition, there is a one-to-one correspondence between state-morphisms and maximal ideals given by a relation: If $s$ is a state-morphism, then $\Ker(s)=\{x \in M \mid s(x)=0\}$ is a maximal ideal of $M$, and conversely, for each maximal ideal $I$ there is a unique state-morphism $s$ on $M$ such that $\Ker(s)=I$.

An $EMV$-algebra $M$ is said to be {\it semisimple} if $\Rad(M)=\{0\}$. Semisimple $EMV$-algebras can be characterized by $EMV$-clans. A system $\mathcal T\subseteq [0,1]^\Omega$ of fuzzy sets of a set $\Omega\ne \emptyset$ is said to be an $EMV$-{\it clan} if
\vspace{1mm}
\begin{enumerate}[nolistsep]
\item[(i)] $0_\Omega \in \mathcal T$ where $0_\Omega(\omega)=0$ for each $\omega \in \Omega$;
\item[(ii)] if $a \in \mathcal T$ is a characteristic function, then (a) $a-f \in \mathcal T$ for each $f\in \mathcal T$ with $f(\omega)\le a(\omega)$ for each $\omega \in \Omega$, (b) if $f,g \in \mathcal T$ with $f(\omega),g(\omega)\le a(\omega)$ for each $\omega \in \Omega$, then $f\oplus g \in \mathcal T$, where $(f\oplus g)(\omega) = \min\{f(\omega)+g(\omega),a(\omega)\}$, $\omega \in \Omega$, and $a$ is a characteristic function from $\mathcal T$;
\item[(iii)] for each $f \in \mathcal T$, there is a characteristic function $a \in \mathcal T$ such that $f(\omega)\le a(\omega)$ for each $\omega \in \Omega$;
\item[(iv)] given $\omega \in \Omega$, there is $f \in \mathcal T$ such that $f(\omega)=1$.
\end{enumerate}

Then $M$ is semisimple iff there is an $EMV$-clan $\mathcal T$ that is isomorphic to $M$, see \cite[Thm 4.11]{DvZa}.

For other unexplained notions and results, please consult with the paper \cite{DvZa}.

\section{Dedekind $\sigma$-complete $EMV$-algebras}

In the present section, we study Dedekind $\sigma$-complete $EMV$-algebras and we show a one-to-one correspondence between the set of maximal ideals and the set of maximal filters using the notion of state-morphisms.

We say that an $EMV$-algebra $M$ is {\it Archimedean in the sense of Belluce} if, for each $x,y \in M$ with $n.x \le y$ for all $n\ge 0$, we have $x\odot y =x$. This notion was introduced by \cite{Bel} for $MV$-algebras, see also \cite[p. 395]{DvPu}.

\begin{prop}\label{pr:Bel1}
Let $M$ be an $EMV$-algebra. The following statements are equivalent:
\vspace{1mm}
\begin{itemize}[nolistsep]
\item[{\rm (i)}] $M$ is Archimedean in the sense of Belluce.
\item[{\rm (ii)}] For each $a\in \mathcal I(M)$, the $MV$-algebra $[0,a]$ is Archimedean in the sense of Belluce.
\item[{\rm (iii)}] For each $a \in \mathcal I(M)$, the $MV$-algebra $[0,a]$ is semisimple.
\item[{\rm (iv)}] $M$ is semisimple.
\end{itemize}
\end{prop}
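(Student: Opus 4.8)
The plan is to prove the chain of equivalences by establishing (i)$\Leftrightarrow$(ii), (ii)$\Leftrightarrow$(iii), and (iii)$\Leftrightarrow$(iv), exploiting the fact that every $EMV$-algebra is locally an $MV$-algebra on each interval $[0,a]$ with $a\in\mathcal I(M)$, together with condition (iv) in the definition of an $EMV$-algebra, which guarantees that every element sits below some idempotent.

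First I would handle (i)$\Leftrightarrow$(ii). The direction (i)$\Rightarrow$(ii) should be almost immediate: if $x,y\in[0,a]$ satisfy $n.x\le y$ for all $n$, then the same inequalities hold in $M$, so Belluce-Archimedeanness of $M$ gives $x\odot y=x$; I must only check that the operation $\odot$ computed in the $MV$-algebra $[0,a]$ agrees with the global $\odot$ of $M$, which is exactly the content of the fact (cited from \cite[Lem 5.1]{DvZa}) that $\odot$ does not depend on the choice of idempotent $a$. For the converse (ii)$\Rightarrow$(i), given $x,y\in M$ with $n.x\le y$ for all $n$, I would use condition (iv) of the $EMV$-definition to pick an idempotent $a$ with $x,y\le a$; then $x,y\in[0,a]$, the hypothesis $n.x\le y$ holds inside $[0,a]$, and Belluce-Archimedeanness of the $MV$-algebra $[0,a]$ yields $x\odot y=x$.

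Next, (ii)$\Leftrightarrow$(iii) is a purely $MV$-algebraic statement applied uniformly to every interval $[0,a]$, $a\in\mathcal I(M)$: for $MV$-algebras it is a known theorem (due to Belluce) that being Archimedean in the sense of Belluce is equivalent to being semisimple, and I would simply invoke this result instance by instance. The remaining equivalence (iii)$\Leftrightarrow$(iv) is where the genuinely $EMV$-specific work lies, and I expect it to be the main obstacle, since it requires relating the radical of $M$ to the radicals of the local $MV$-algebras $[0,a]$. Using the explicit description of $\Rad(M)$ in \eqref{eq:Rad}, an element $x\ne 0$ lies in $\Rad(M)$ precisely when there is an idempotent $a\ge x$ with $n.x\le\lambda_a(x)$ for all $n$; this last condition is exactly the defining condition for $x$ to lie in the radical $\Rad([0,a])$ of the $MV$-algebra $[0,a]$.

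Therefore I would argue: if every $[0,a]$ is semisimple (iii), then for any $x\in\Rad(M)$ with $x\ne 0$ the witnessing idempotent $a$ forces $x\in\Rad([0,a])=\{0\}$, a contradiction, so $\Rad(M)=\{0\}$ and $M$ is semisimple (iv). Conversely, if $M$ is semisimple and some $[0,a]$ had a nonzero radical element $x$, then $x$ would satisfy $n.x\le\lambda_a(x)$ for all $n$ with $x\le a\in\mathcal I(M)$, whence $x\in\Rad(M)$ by \eqref{eq:Rad}, contradicting $\Rad(M)=\{0\}$; so each $[0,a]$ is semisimple. The delicate point to verify carefully here is that the computation of $\lambda_a$, of the multiples $n.x$, and of the ordering all coincide whether performed in $M$ or in the $MV$-algebra $[0,a]$ — this is guaranteed because the $MV$-structure on $[0,a]$ is by definition inherited from $M$ — and that the characterization of the $MV$-radical via the condition $n.x\le\lambda_a(x)$ for all $n$ matches the bracketed condition in \eqref{eq:Rad}. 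Once this coherence is checked, the equivalence (iii)$\Leftrightarrow$(iv) follows directly from \eqref{eq:Rad}.
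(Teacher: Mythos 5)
Your proposal is correct, and its skeleton (the three equivalences (i)$\Leftrightarrow$(ii), (ii)$\Leftrightarrow$(iii), (iii)$\Leftrightarrow$(iv)) coincides with the paper's; the first two equivalences are handled exactly as in the paper, with (i)$\Rightarrow$(ii) resting on the independence of $\odot$ from the chosen idempotent, (ii)$\Rightarrow$(i) on picking an idempotent above $x$ and $y$, and (ii)$\Leftrightarrow$(iii) on Belluce's theorem for $MV$-algebras. The difference is in (iii)$\Leftrightarrow$(iv). For (iii)$\Rightarrow$(iv) the paper also starts from \eqref{eq:Rad} but finishes by invoking Belluce-Archimedeanness of $[0,a]$ (via (ii)$\Leftrightarrow$(iii)) to get $x=x\odot\lambda_a(x)=0$, whereas you identify the witness condition $n.x\le\lambda_a(x)$ with membership in $\Rad([0,a])$ and conclude from $\Rad([0,a])=\{0\}$; for that direction you only need the easy inclusion (infinitesimals lie in the radical), so this is fine. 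For (iv)$\Rightarrow$(iii) the two routes genuinely diverge: the paper uses the fact that for a maximal ideal $I$ of $M$ the trace $[0,a]\cap I$ is either $[0,a]$ or a maximal ideal of $[0,a]$ (\cite[Prop 3.23]{DvZa}), giving $\Rad([0,a])\subseteq[0,a]\cap\Rad(M)=\{0\}$ directly; you instead push a nonzero element of $\Rad([0,a])$ up into $\Rad(M)$ via \eqref{eq:Rad}. Your version needs the nontrivial direction of the characterization $\Rad([0,a])=\{0\}\cup\{x\mid n.x\le\lambda_a(x)\ \forall n\}$ (the radical of an $MV$-algebra consists of the infinitesimals, \cite[Prop 3.6.4]{CDM}, or equivalently \eqref{eq:Rad} applied to the $EMV$-algebra $[0,a]$ after observing that $\lambda_b(x)=b\wedge\lambda_a(x)$ for idempotents $b\le a$); this is a standard but not entirely free fact, so you should cite it explicitly rather than call it the ``defining condition'' of the radical. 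With that citation supplied, both arguments are of comparable length and depth: the paper trades the infinitesimal characterization of $\Rad([0,a])$ for the ideal-restriction lemma, and your version has the mild aesthetic advantage of treating both directions of (iii)$\Leftrightarrow$(iv) symmetrically through \eqref{eq:Rad}.
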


\begin{proof}
(i) $\Rightarrow$ (ii) If $x,y \in [0,a]$, then $x\odot y\in [0,a]$ so that the implication is evident.

(ii) $\Rightarrow$ (i) Let $x,y \in M$ and let $n.x \le y$ for each $n\ge 0$. There is an idempotent $a \in M$ such that $x,y \le a$. Hence $n.x \le y\le a$, so that $x\odot y=x$.

(ii) $\Leftrightarrow$ (iii) It follows from \cite[Thms 31, 33]{Bel}.

(iii) $\Rightarrow$ (iv) We use equation (\ref{eq:Rad}). Assume $x \in \Rad(M)$. By \cite[Thm 5.14]{DvZa}, there is an idempotent $a \in M$ such that $x\le a$ and $n.x \le \lambda_a(x)$. Using Archimedeanicity in the sense of Belluce holding in the $MV$-algebra $[0,a]$, we have $0=x\odot \lambda_a(x)= x$, so that $\Rad(M)=\{0\}$ and $M$ is semisimple.

(iv) $\Rightarrow$ (iii) Let $a$ be an arbitrary idempotent of $M$.  If $I$ is a maximal ideal of $M$, then by \cite[Prop 3.23]{DvZa}, $[0,a]\cap I$ is either $[0,a]$ or a maximal ideal of $[0,a]$. Since $\{0\}=\Rad(M)=\bigcap\{I \mid I\in  \mathrm{MaxI}(M)\}$, we have $\Rad([0,a])\subseteq [0,a]\cap \Rad(M)=\{0\}$ proving $[0,a]$ is a semisimple $MV$-algebra.
\end{proof}

According to the Basic Representation Theorem, Theorem \ref{th:embed}, every $EMV$-algebra $M$ is either an $MV$-algebra or it can be embedded into an $MV$-algebra $N$ as its maximal ideal, so that we can assume that $M$ is an $EMV$-subalgebra of $N$. We define a notion of Dedekind $\sigma$-complete $EMV$-algebras as follows.

We say that an $EMV$-algebra $M$ is {\it Dedekind} $\sigma$-{\it complete} if, for each sequence $\{x_n\}$ of elements of $M$ for which there is an element $x_0\in M$ such that $x_n\le x_0$ for each $n$, $\bigvee_n x_n$ exists in $M$. It is easy to see that $M$ is Dedekind $\sigma$-complete iff $[0,a]$ is a $\sigma$-complete $MV$-algebra for each idempotent $a \in M$.

\begin{lem}\label{le:Bel2}
{\rm (i)} If $x\in M$ is the least upper bound of a sequence $\{x_n\}$ of elements of an $EMV$-algebra $M$, then it is the least upper bound in $N$.

{\rm (ii)} If $\{x_n\}$ has an upper bound $a\in \mathcal I(M)$, then $\bigvee_n x_n$ exists in $M$ if and only if it exists in the $MV$-algebra $[0,a]$. In either case, the suprema coincide.

{\rm (iii)} $M$ is Dedekind $\sigma$-complete if and only if, given a sequence $\{y_n\}$ of elements of $M$, there is $y=\bigwedge_n y_n\in M$.

If $x=\bigvee_n x_n \le a \in \mathcal I(M)$, then
$$\lambda_a(x) =\bigwedge_n \lambda_a(x_n),$$
and if $y = \bigwedge_n y_n$ and $y_n\le a\in \mathcal I(M)$, then
$$\lambda_a(y)=\bigvee_n\lambda_a(y_n).
$$
\end{lem}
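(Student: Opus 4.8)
The plan is to use the representing $MV$-algebra $N$ of Theorem \ref{th:embed} and the intervals $[0,a]$, $a\in\mathcal I(M)$, as two ``coordinate systems'' into which order-theoretic computations in $M$ can be transported, the transport being governed by one observation: since $M$ is a maximal ideal of $N$ it is downwards closed in $N$, and likewise each $[0,a]$ is downwards closed in $M$. For (i), suppose $x=\bigvee_n x_n$ in $M$ and let $z\in N$ be any upper bound of $\{x_n\}$ in $N$. The element $x\wedge z$ (formed in the lattice $N$) lies below $x\in M$, hence belongs to $M$, and it is again an upper bound of $\{x_n\}$ since each $x_n\le x$ and $x_n\le z$; as the embedding reflects order, $x\wedge z$ is an upper bound already in $M$. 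Because $x$ is the least upper bound in $M$ we get $x\le x\wedge z\le z$, so $x$ is the least upper bound in $N$. No case distinction between $z\in M$ and $z=\lambda_1(z_0)$ is needed once one meets with $x$; only the downward-closedness of $M$ in $N$ from Theorem \ref{th:embed} is used.

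For (ii) the same meet-trick is applied inside $M$. If $x=\bigvee_n x_n$ exists in $M$ then $x\le a$ (as $a$ is an upper bound), so $x\in[0,a]$; any upper bound in $[0,a]$ is one in $M$, whence $x$ is the supremum in $[0,a]$. Conversely, if $x=\bigvee_n x_n$ in $[0,a]$ and $z\in M$ is an upper bound in $M$, then $z\wedge a\in[0,a]$ is an upper bound of $\{x_n\}$ in $[0,a]$ (each $x_n=x_n\wedge a\le z\wedge a$), so $x\le z\wedge a\le z$. Running the dual argument for meets yields the companion fact I shall reuse: \emph{for a sequence $\{z_n\}\s[0,a]$, the infimum $\bigwedge_n z_n$ exists in $[0,a]$ iff it exists in $M$, and the two values coincide} — immediate, since any lower bound in $M$ of such a sequence is $\le a$ and hence lies in $[0,a]$.

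For (iii), recall from the remark preceding the lemma that $M$ is Dedekind $\sigma$-complete iff every $[0,a]$ is a $\sigma$-complete $MV$-algebra. Assuming this, take any $\{y_n\}$, pick by axiom (iv) an idempotent $a$ with $y_1\le a$, and replace $y_n$ by $y_n\wedge y_1\in[0,a]$; the sequences $\{y_n\}$ and $\{y_n\wedge y_1\}$ have exactly the same lower bounds in $M$, so this alters neither the existence nor the value of the infimum. The reduced sequence lies in the $\sigma$-complete $MV$-algebra $[0,a]$, where $\bigwedge_n z_n=\lambda_a(\bigvee_n\lambda_a(z_n))$ exists, and by the companion fact this is the infimum in $M$. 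Conversely, suppose all sequential infima exist in $M$ and let $\{x_n\}$ be bounded above by $x_0$; pick an idempotent $a\ge x_0$, so $x_n\le a$. Then $w:=\bigwedge_n\lambda_a(x_n)$ exists in $M$, lies in $[0,a]$, and by the companion fact is the infimum in $[0,a]$; applying the order-reversing involution $\lambda_a$ of $[0,a]$ gives $\lambda_a(w)=\bigvee_n x_n$ in $[0,a]$, which by (ii) is the supremum in $M$. Hence $M$ is Dedekind $\sigma$-complete.

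The two displayed formulas are then simply $\lambda_a$ applied to (ii) via the companion fact. If $x=\bigvee_n x_n\le a$, then by (ii) $x$ is the supremum in $[0,a]$, so $\lambda_a(x)=\bigwedge_n\lambda_a(x_n)$ in $[0,a]$; since each $\lambda_a(x_n)\le a$, the companion fact lifts the identity to $M$. Symmetrically, if $y=\bigwedge_n y_n$ with $y_n\le a$, then $y\le a$, the infimum is computed in $[0,a]$, and $\lambda_a(y)=\bigvee_n\lambda_a(y_n)$ follows in $[0,a]$ and hence in $M$ by (ii). I expect the only genuinely delicate point to be part (i): one must ensure that meeting an arbitrary $N$-upper bound with $x$ returns an element of $M$, which is exactly where the maximal-ideal (downward-closed) structure of $M$ inside $N$ is essential. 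Everything else is routine transport of suprema and infima across the downward-closed inclusions $[0,a]\s M\s N$.
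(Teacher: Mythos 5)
Your proof is correct and follows essentially the same route as the paper's: part (i) uses exactly the paper's trick of meeting an $N$-upper bound with $x$ and invoking the downward-closedness of the maximal ideal $M$ in $N$; part (ii) is the same meet-with-$a$ argument; and part (iii) rests, as in the paper, on the order-reversing involution $\lambda_a$ of $[0,a]$ exchanging suprema and infima. The only (harmless) cosmetic difference is that you route part (iii) through the remark that Dedekind $\sigma$-completeness of $M$ is equivalent to $\sigma$-completeness of each $[0,a]$, whereas the paper argues directly with monotone sequences.
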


\begin{proof}
(i) If $M=N$, the statement is trivial. So let $M$ be a proper $EMV$-algebra, i.e.,  $M\varsubsetneq N$.  Assume that for $y\in N\setminus M$, we have $x_n \le y$ for each $n$. Then $y=y^*_0:=\lambda_1(y_0)$ for some $y_0\in M$, where $1$ is the top element of $N$. We have $x_n \le x\wedge y^*_0 \le x, y^*_0$. Since $M$ is a maximal ideal of $N$, we have $x\wedge y^*_0\in M$ which entails $x\le x\wedge y^*_0\le x$, and finally $x\le y_0^*$ proving $x$ is the least upper bound also in $N$.

(ii) Let $x = \bigvee_n x_n$, and $x \le a\in \mathcal I(M)$. If $y\in [0,a]$ is an upper bound of $\{x_n\}$, then clearly $x\le y$, so that $x$ is also its least upper bound taken in $[0,a]$. Conversely, let $x$ be the least upper bound of $\{x_n\}$ taken in the $MV$-algebra $[0,a]$ and let $y\in M$ be an arbitrary upper bound of $\{x_n\}$. Then $x_n \le y \wedge a\le a$ so that $x\le y\wedge a \le y$.

(iii) Assume $M$ is Dedekind $\sigma$-complete and let $\{y_n\}$ be a sequence of elements of $M$. Since $M$ is a lattice, we can assume $y_{n+1}\le y_n\le y_1$ for each $n\ge 1$. There is an idempotent $a\in M$ such that $y_n\le a$ for each $n\ge 1$. Then $\lambda_a(y_n)\le \lambda_a(y_{n+1})\le a$, so that there is $y_0=\bigvee_n \lambda_a(y_n) \in [0,a]$. We assert $\lambda_a(y_0)=\bigwedge_n y_n$.  Let $y'\le y_n$ for each $n\ge 1$, then $\lambda_a(y_n)\le \lambda_a(y')$ so that $\lambda_a(y')\le y_0$, and $y'=\lambda^2_a(y^*)\le \lambda_a(y_0)$.

Conversely, let every sequence from $M$ have the infimum in $M$. Let $\{x_n\}$ be an arbitrary sequence from $M$ with an upper bound $x_0\in M$; we can assume $x_n \le x_{n+1}$ for each $n\ge 1$. There is an idempotent $a\in M$ such that $x_n\le x_0\le a$. Then $a\ge \lambda_a(x_n)\ge \lambda_a(x_{n+1})\ge \lambda_a(x_0)$, and there is $z_0=\bigwedge_n \lambda_a(x_n)$. As in the previous case, we can show $\lambda_a(z_0)= \bigvee_n x_n$.
\end{proof}

For the next result, we need the following notion. We say that an $EMV$-algebra $M$ satisfies the {\it general comparability property} if it holds for every $MV$-algebra $([0,a]; \oplus,\lambda_a,0,a)$, i.e. if $a \in \mathcal I(M)$ and $x,y \in [0,a]$, there is an idempotent $e$, $e \in [0,a]$ such that $x\wedge e\le y$ and $y\wedge \lambda_a(e) \le x$.

\begin{prop}\label{pr:Bel3}
If an $EMV$-algebra $M$ is Dedekind $\sigma$-complete, then $M$ is a semisimple $EMV$-algebra satisfying the general comparability property, and the set of idempotent elements $\mathcal I(M)$ is a Dedekind $\sigma$-complete subalgebra of $M$.
\end{prop}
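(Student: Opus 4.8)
The plan is to reduce all three assertions to the $MV$-algebras $[0,a]$, $a\in\mathcal I(M)$, using the observation already recorded in the text that $M$ is Dedekind $\sigma$-complete if and only if each $[0,a]$ is a $\sigma$-complete $MV$-algebra, together with the decomposition tools of Lemma \ref{le:Bel2}. For semisimplicity I would invoke Proposition \ref{pr:Bel1}, so that it suffices to show each $[0,a]$ is semisimple. Since $[0,a]$ is $\sigma$-complete, this is the classical fact that a $\sigma$-complete $MV$-algebra is semisimple; if one prefers a self-contained argument, take $x\in\Rad([0,a])$, note $\{n.x\}$ is bounded by $\lambda_a(x)$, put $w=\bigvee_n n.x$, and use $\sigma$-continuity of $\oplus$ to get $x\oplus w=w$ with $w\le\lambda_a(x)$, which in the associated $\ell$-group forces $x=0$. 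Then Proposition \ref{pr:Bel1} (iii)$\Rightarrow$(iv) gives $\Rad(M)=\{0\}$.

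For the general comparability property, fix $a\in\mathcal I(M)$ and $x,y\in[0,a]$, set $d:=y\odot\lambda_a(x)$ and $e:=\bigvee_n n.d$; this supremum exists because $[0,a]$ is $\sigma$-complete. First, $e$ is idempotent: since $n.d\le (n+1).d$, $\sigma$-continuity of $\oplus$ yields $e\oplus e=\bigvee_n(e\oplus n.d)=\bigvee_{n,m}(m+n).d=e$. I claim $e$ witnesses comparability, i.e. $x\wedge e\le y$ and $y\wedge\lambda_a(e)\le x$. The second inequality is the easy half: from $d\le e$ and $e$ Boolean we get $d\wedge\lambda_a(e)=0$, and applying the projection $z\mapsto z\wedge\lambda_a(e)$ (an $MV$-homomorphism onto $[0,\lambda_a(e)]$) to the identity $y=(x\wedge y)\oplus(y\odot\lambda_a(x))$ of (\ref{eq:x<y2}) (with the roles of $x,y$ interchanged) gives $y\wedge\lambda_a(e)=(x\wedge y)\wedge\lambda_a(e)\le x$.

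The first inequality $x\wedge e\le y$ is the crux and where I expect the real work. I would argue by contradiction: put $z:=(x\wedge e)\odot\lambda_a(y)$ and suppose $z\ne 0$. Since $z\le x\wedge e\le e$ and $z\le x\odot\lambda_a(y)$, while $d=y\odot\lambda_a(x)$, the standard disjointness of the two truncated differences gives $z\wedge d\le (x\odot\lambda_a(y))\wedge(y\odot\lambda_a(x))=0$. Disjoint positive elements stay disjoint under multiples (a standard $\ell$-group fact, using $[0,a]\cong\Gamma(G,a)$), so $z\wedge n.d=0$ for every $n$, whence by the infinite distributive law $z\wedge e=z\wedge\bigvee_n n.d=\bigvee_n(z\wedge n.d)=0$; as $z\le e$ this forces $z=0$, a contradiction. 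Thus $x\wedge e\le y$, and $M$ has the general comparability property. The main obstacle is precisely this disjointness/support argument, since it is what replaces the pointwise picture ``$e=$ indicator of $\{y>x\}$'' by an algebraic proof.

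Finally, for $\mathcal I(M)$ I would first check that it is an $EMV$-subalgebra: it contains $0$ and is closed under $\oplus$ (already noted in the text), under $\vee,\wedge$ (for idempotents $a\oplus b=a\vee b$ and $a\odot b=a\wedge b$), and under each $\lambda_b$ restricted to idempotents $\le b$ (Boolean complement), so that $\mathcal I(M)$ is in fact a generalized Boolean algebra. For Dedekind $\sigma$-completeness, take a sequence $\{a_n\}\subseteq\mathcal I(M)$ bounded above by some $c\in\mathcal I(M)$; replacing $a_n$ by $a_1\vee\cdots\vee a_n$ we may assume it increasing, and $a:=\bigvee_n a_n$ exists in $M$ by Dedekind $\sigma$-completeness. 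Using $\sigma$-continuity of $\oplus$ together with $a_m\oplus a_n\le a_{\max(m,n)}\oplus a_{\max(m,n)}=a_{\max(m,n)}\le a$, I obtain $a\oplus a=\bigvee_n(a\oplus a_n)\le a$, so $a$ is idempotent; hence $\bigvee_n a_n\in\mathcal I(M)$ and this is also the supremum computed inside $\mathcal I(M)$.
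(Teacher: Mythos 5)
Your proposal is correct, and its overall architecture coincides with the paper's: all three claims are reduced to the $MV$-algebras $[0,a]$, $a\in\mathcal I(M)$, semisimplicity is obtained via Proposition \ref{pr:Bel1} from the semisimplicity of each $\sigma$-complete $[0,a]$, and the closure of $\mathcal I(M)$ under bounded countable suprema is proved by the same $\sigma$-continuity computation $a\oplus a=\bigvee_n\bigvee_m(a_m\oplus a_n)=a$ after replacing the sequence by an increasing one. The one place where you genuinely diverge is the general comparability property: the paper disposes of it in one line by citing Goodearl's theorem that a Dedekind $\sigma$-complete unital group (equivalently, a $\sigma$-complete $MV$-algebra) satisfies general comparability, whereas you re-prove that theorem inside $[0,a]$ by constructing the witnessing idempotent $e=\bigvee_n n.(y\odot\lambda_a(x))$ and verifying $x\wedge e\le y$ and $y\wedge\lambda_a(e)\le x$ via the disjointness $(x\odot\lambda_a(y))\wedge(y\odot\lambda_a(x))=0$, its stability under multiples, and the infinite distributive law of Proposition \ref{pr:Bel4}. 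Your argument is sound (it is essentially the standard proof of the cited theorem, translated into $MV$-language), and what it buys is self-containedness --- the paper stays entirely within already-proved $EMV$/$MV$ facts; what it costs is length, and it proves nothing more general than the citation already gives. The same remark applies, on a smaller scale, to your optional direct computation $x\oplus\bigvee_n n.x=\bigvee_n n.x$ for semisimplicity in place of the reference to the literature.
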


\begin{proof}
Let $a\in M$ be an idempotent. Since $M$ is Dedekind $\sigma$-complete, then $[0,a]$ is a $\sigma$-complete $MV$-algebra, and by \cite[Prop 6.6.2]{CDM}, $[0,a]$ is semisimple. Applying Proposition \ref{pr:Bel1}, we conclude that $M$ is semisimple. Using \cite[Thm 9.9]{Goo}, we can conclude that every $MV$-algebra $[0,a]$ satisfies the general comparability property, consequently, so does $M$.

Now let $\{a_n\}$ be a sequence of idempotent elements of $M$ bounded by some element $x$. Clearly, $\{a_n\}$ is bounded by some idempotent $a_0$. Let $a = \bigvee_n a_n$ exists in $M$. For any $n$, let $b_n=a_1\vee\cdots \vee a_n$. Then $a = \bigvee_n b_n$. Using \cite[Prop 1.21]{georgescu}, we have $a\oplus a= a\oplus \Big(\bigvee_n b_n\Big)= \bigvee_n (a\oplus b_n)=\bigvee_n\bigvee_m(b_n \oplus b_m)=\bigvee_n\Big(\bigvee_{m\le n} (b_n\oplus b_m)\vee \bigvee_{m>n} (b_n \oplus b_m)\Big)=\bigvee_n \Big(\bigvee_{m\le n} b_n \vee \bigvee_{m>n}b_m\Big)= \bigvee_n b_n=a$. That is, $a$ is an idempotent of $M$.
\end{proof}

\begin{prop}\label{pr:Bel4}
Let $M$ be an $EMV$-algebra. If $\bigvee_t y_t$ exists in $M$, then for each $x \in M$, $\bigvee_t (x\wedge y_t)$ exists and
\begin{eqnarray*}
x \wedge \bigvee_t y_t&=& \bigvee_t (x \wedge y_t),\\
\Big(\bigvee_t y_t\Big)\odot x &=& \bigvee_t(y_t \odot x),\\
\end{eqnarray*}
where $a \in \mathcal I(M)$ such that $x,\bigvee_t y_t \le a$.
\end{prop}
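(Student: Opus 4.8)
The plan is to reduce everything to the $MV$-algebra $[0,a]$ and then invoke the infinite distributivity laws valid in $MV$-algebras. First I would fix an idempotent $a\in\mathcal I(M)$ with $x,\bigvee_t y_t\le a$; such an $a$ exists because, by axiom (iv), there are idempotents $a_1\ge x$ and $a_2\ge\bigvee_t y_t$, and $a:=a_1\oplus a_2$ is again an idempotent bounding both. Since $y_t\le\bigvee_t y_t\le a$ for every $t$, the whole family $\{y_t\}$ together with $x$ lives in $[0,a]$, which is a genuine $MV$-algebra by axiom (iii). The purely order-theoretic argument of Lemma~\ref{le:Bel2}(ii)---which never uses that the index set is countable---shows that $\bigvee_t y_t$ computed in $[0,a]$ coincides with the one computed in $M$, and likewise any supremum that I produce inside $[0,a]$ is automatically the supremum in $M$. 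Hence it suffices to prove both identities inside the $MV$-algebra $[0,a]$.

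Inside $[0,a]$ I would use the infinite distributivity of the $MV$-operations over existing joins, exactly the tool (\cite[Prop 1.21]{georgescu}) already used in the proof of Proposition~\ref{pr:Bel3}: whenever $\bigvee_t y_t$ exists, both $\bigvee_t(z\oplus y_t)$ and $\bigvee_t(z\odot y_t)$ exist and equal $z\oplus\bigvee_t y_t$ and $z\odot\bigvee_t y_t$, respectively. The second identity, $\big(\bigvee_t y_t\big)\odot x=\bigvee_t(y_t\odot x)$, is then immediate from the $\odot$-law together with commutativity of $\odot$ in $[0,a]$. For the first, I would recall the $MV$-identity $x\wedge y=x\odot(\lambda_a(x)\oplus y)$ valid in $[0,a]$ and compute
\[
x\wedge\bigvee_t y_t=x\odot\Big(\lambda_a(x)\oplus\bigvee_t y_t\Big)=x\odot\bigvee_t\big(\lambda_a(x)\oplus y_t\big)=\bigvee_t\Big(x\odot\big(\lambda_a(x)\oplus y_t\big)\Big)=\bigvee_t(x\wedge y_t),
\]
each step being one application of the distributivity laws, so that the existence of $\bigvee_t(x\wedge y_t)$ drops out along the way.

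The main point requiring care is that all the joins---in $M$, in $[0,a]$, and at each intermediate step---refer to the same element, and that the $MV$-distributivity is invoked in the form valid for arbitrary, not merely countable, index sets $t$, since the index here is unrestricted. The sup-transfer of Lemma~\ref{le:Bel2}(ii) is purely order-theoretic and therefore index-set independent, and the operation $\odot$ used above is the global one of $M$, well defined by \cite[Lem 5.1]{DvZa} and hence agreeing with the $\odot$ of $[0,a]$. Once these two observations are recorded, no genuine obstacle remains: the argument is essentially a translation of the classical $MV$-algebra distributivity into the $EMV$-setting.
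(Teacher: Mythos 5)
Your proposal is correct and follows essentially the same route as the paper: both reduce to the $MV$-algebra $[0,a]$ for a suitable idempotent $a$, transfer the supremum between $M$ and $[0,a]$ by the order-theoretic argument of Lemma~\ref{le:Bel2}(ii), and then invoke the known distributivity of $MV$-operations over existing joins from \cite{georgescu}. The only cosmetic difference is that the paper cites the meet-distributivity of $[0,a]$ directly (\cite[Prop 1.18]{georgescu}), whereas you rederive it from the $\oplus$/$\odot$-distributivity via $x\wedge y=x\odot(\lambda_a(x)\oplus y)$, which is a perfectly valid and slightly more self-contained variant of the same argument.
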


\begin{proof}
Let $y=\bigvee_t y_t$ exist in $M$. Clearly, $x\wedge y \ge x\wedge y_t$ for each $t$. Now let $z\ge x\wedge y_t$ for each $t$. There is an idempotent $a\in M$ such that $x,y,z \le a$. Then the statement holds in the $MV$-algebra $[0,a]$, see e.g. \cite[Prop 1.18]{georgescu}, so does in $M$.

The second property holds also in the $MV$-algebra $[0,a]$ as it follows from \cite[Prop 1.16]{georgescu}.
\end{proof}

Let $s$ be a state-morphism on $M$. We define two sets
$$
\Ker(s):=\{x \in M\mid s(x)=0\},\quad \Ker_1(s)=\{x\in M\mid s(x)=1\}.
$$
We have the following simple but useful characterization of maximal ideals and maximal filters by state-morphisms.

\begin{lem}\label{le:Bel5}
Let $s$ be a state-morphism on an $EMV$-algebra $M$. Then $\Ker(s)$ is a maximal ideal of $M$ and $\Ker_1(s)$ is a maximal filter of $M$. Conversely, for each maximal ideal $I$ and each maximal filter $F$, there are unique state-morphisms $s$ and $s_1$ on $M$ such that $I=\Ker(s)$ and $F=\Ker_1(s_1)$.
\end{lem}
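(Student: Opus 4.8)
The two assertions about maximal ideals, namely that $\Ker(s)$ is a maximal ideal and that each maximal ideal equals $\Ker(s)$ for a unique state-morphism $s$, are precisely the correspondence recorded after Theorem~\ref{th:embed} (from \cite[Thm 4.2]{DvZa}), so I would simply cite them. The genuinely new content concerns filters, and the first tool I would set up is that every state-morphism $s$ preserves $\odot$: if $x,y\le a\in\mI(M)$, then $s(x\odot y)=s(\lambda_a(\lambda_a(x)\oplus\lambda_a(y)))=\lambda_{s(a)}(\lambda_{s(a)}(s(x))\oplus\lambda_{s(a)}(s(y)))$, and since $s(a)$ is idempotent in $[0,1]$ we have $s(a)\in\{0,1\}$; both cases yield $s(x\odot y)=s(x)\odot s(y)$. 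In the same way $s$ is order preserving (it preserves $\wedge$), and whenever $s(x)=1$ with $x\le a\in\mI(M)$ we must have $s(a)=1$.

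With these facts, $\Ker_1(s)$ is a filter: it is nonempty by the definition of a state-morphism, it is upward closed by monotonicity of $s$, and it is closed under $\odot$ because $s(x)=s(y)=1$ forces $s(x\odot y)=1\odot 1=1$. For maximality, $\Ker_1(s)$ is proper since $s(0)=0$. Suppose a filter $F\supsetneq\Ker_1(s)$ and pick $y\in F$ with $c:=s(y)<1$. Then $y^n\in F$ and $s(y^n)=(nc-(n-1))\vee 0$, which vanishes once $n\ge 1/(1-c)$; set $z:=y^n$, so $z\in F$ and $s(z)=0$. Choosing an idempotent $a\ge z$ with $s(a)=1$ (take $a=d\oplus e$, where $d\ge z$ is idempotent and $e$ is an idempotent with $s(e)=1$), we get $s(\lambda_a(z))=\lambda_1(0)=1$, hence $\lambda_a(z)\in\Ker_1(s)\s F$. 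Therefore $z\odot\lambda_a(z)\in F$, but in the $MV$-algebra $[0,a]$ this product is $0$; thus $0\in F$ and $F=M$, a contradiction. This proves $\Ker_1(s)$ is a maximal filter.

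For the converse I would first dispose of uniqueness by observing that $\Ker_1(s)$ already determines $\Ker(s)$ through the identity $\Ker(s)=\{x\in M: \exists\,a\in\mI(M)\cap\Ker_1(s),\ x\le a,\ \lambda_a(x)\in\Ker_1(s)\}$ (one inclusion uses the construction of $a$ above, the reverse uses $\lambda_a(\lambda_a(x))=x$). Combined with the uniqueness of the state-morphism attached to a maximal ideal, this makes $s\mapsto\Ker_1(s)$ injective and gives the uniqueness of $s_1$.

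Existence is the main obstacle. Given a maximal filter $F$, note that $F$ contains some idempotent $a$ (any idempotent above an element of $F$ lies in $F$), and that $F_a:=F\cap[0,a]$ is a proper filter of the $MV$-algebra $[0,a]$. I would show $F_a$ is actually maximal in $[0,a]$: if $F_a\subsetneq G$ for a proper filter $G$ of $[0,a]$ and $g\in G\setminus F_a$, then $g\notin F$, so the filter generated by $F\cup\{g\}$ in $M$ is improper, yielding $f\odot g^k=0$ for some $f\in F$, $k\ge 1$; replacing $f$ by $f\wedge a\in F_a\s G$ keeps the product $0$ and now computes inside $[0,a]$, forcing $0\in G$, a contradiction. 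By the classical complement duality in the $MV$-algebra $[0,a]$, $F_a=\Ker_1(s_a)$ for a unique state-morphism $s_a$ of $[0,a]$ with associated maximal ideal $\Ker(s_a)$. The remaining and most delicate step is to transfer this back to $M$: using the correspondence of \cite[Prop 3.23]{DvZa} between maximal ideals of $[0,a]$ and of $M$, I would extend $\Ker(s_a)$ to a maximal ideal $I$ of $M$ with $I\cap[0,a]=\Ker(s_a)$ and $a\notin I$, and then take the state-morphism $s$ with $\Ker(s)=I$ from the ideal correspondence. Finally $\Ker_1(s)=F$ follows from the equivalence $y\in\Ker_1(s)\iff y\wedge a\in F_a$ (using $s(y\wedge a)=s(y)\wedge s(a)$ and $a\in F$), which gives both inclusions. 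I expect this lifting step — guaranteeing that a maximal ideal of the local $MV$-algebra $[0,a]$ raises to a maximal ideal of $M$ that still avoids the idempotent $a$ — to demand the most care and the heaviest reliance on the structure theory of \cite{DvZa}.
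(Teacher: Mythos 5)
Your argument is essentially correct and reaches the same conclusion, but it takes a visibly different route from the paper in the two nontrivial places. For maximality of $\Ker_1(s)$ you argue directly that any filter properly containing $\Ker_1(s)$ must contain $z\odot\lambda_a(z)=0$ for a suitable $z$; the paper instead verifies the membership criterion of \cite[Prop 5.4]{DvZa} (for $x\notin\Ker_1(s)$ some $\lambda_b(x^n)$ lies in $\Ker_1(s)$). These are the same computation packaged differently, and yours has the advantage of being self-contained. The real divergence is in the existence half: the paper simply writes down $I_F=\{\lambda_a(x)\mid x\in F,\ a\in\mathcal I(M),\ x\le a\}$, invokes \cite[Thm 5.6]{DvZa} to conclude that $I_F$ is a maximal ideal, takes the state-morphism $s$ with $\Ker(s)=I_F$, and checks $F\s\Ker_1(s)$, finishing by maximality of both filters. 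You instead localize to the $MV$-algebra $[0,a]$ for an idempotent $a\in F$, prove (correctly, and with a nice argument) that $F\cap[0,a]$ is a maximal filter there, and then try to lift the corresponding maximal ideal of $[0,a]$ back to a maximal ideal of $M$ avoiding $a$.

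That lifting step is where your proposal has a gap as written: \cite[Prop 3.23]{DvZa}, as it is used in this paper, goes in the opposite direction (it restricts a maximal ideal of $M$ to $[0,a]$; it does not produce a maximal ideal of $M$ extending a prescribed maximal ideal of $[0,a]$). An ideal of $M$ maximal with respect to containing $\Ker(s_a)$ and avoiding $a$ is prime but not obviously a maximal ideal of $M$, so the extension needs an actual argument --- for instance, extending the state-morphism $s_a$ from $[0,a]$ to each $[0,b]$ with $a\le b\in\mathcal I(M)$ via $x\mapsto s_a(x\wedge a)$ and passing to the directed union, or else citing the filter--ideal duality \cite[Thm 5.6]{DvZa} that the paper itself uses, at which point your detour through $[0,a]$ becomes unnecessary. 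Everything else in your write-up (the preservation of $\odot$, the identity recovering $\Ker(s)$ from $\Ker_1(s)$ and the resulting uniqueness, and the final check $y\in\Ker_1(s)\Leftrightarrow y\wedge a\in F_a$) is sound.
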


\begin{proof}
The one-to-one correspondence between $\Ker(s)$ and a maximal ideal $I$ of $M$ was established \cite[Thm 4.2]{DvZa}.

Now we show that $\Ker_1(s)$ is a maximal filter of $M$.  It is clear that $\Ker_1(s)$ is a filter. Let $x \notin \Ker_1(s)$. Then $s(x)<1$ and since $s(x)$ is a real number in the $MV$-algebra of the real interval $[0,1]$, we have that there are an integer $n$ such that $s(x^n)=(s(x))^n=0$ and an idempotent $b \in \mathcal I(M)$ such that $x\le b$ and $s(b)=1$. Then $x^n \oplus \lambda_b(x^n)=b$, so that $\lambda_b(x^n)\in \Ker_1(s)$ which by criterion (ii) of \cite[Prop 5.4]{DvZa} means $\Ker_1(s)$ is a maximal filter.

Now let $F$ be a maximal filter of $M$. Define $I_F=\{\lambda_a(x) \mid x\in F, a \in \mathcal I(M), x\le a\}$. By \cite[Thm 5.6]{DvZa}, $I_F$ is a maximal ideal of $M$ so that, there is a unique state-morphism $s$ such that $\Ker(s)=I_F$. Now let $x \in F$ and let $a$ be an idempotent of $M$ such that $x\le a$ and $s(a)=1$. Then $s(\lambda_a(x))=0$, so that $1=s(a)= s(x\oplus \lambda_a(x))= s(x)$, and $F\subseteq \Ker_1(s)$. The maximality of $F$ and $\Ker_1(s)$ yields $F=\Ker_1(s)$.

If there is another state-morphism $s'$ such that $\Ker_1(s)=F=\Ker_1(s')$, then $\Ker(s)=I_F=\Ker(s')$, which by \cite[Thm 4.3]{DvZa} means $s=s'$.
\end{proof}

\section{Hull-Kernel Topologies and the Weak Topology\\ of State-Morphisms}

The present section is devoted to the hull-kernel topology of the set of maximal ideals and the weak topology of the set of state-morphisms. We show that these spaces are homeomorphic, and more information can be derived for $EMV$-algebras with the general comparability property. In addition, using the Basic Representation Theorem, we show that if an $EMV$-algebra $M$ has no top elements, the state-morphism space is only locally compact and not compact, and its one-point compactification is homeomorphic to the
state-morphism space of $N$. The similar property holds for the set of maximal filters of $M$ and $N$, respectively.

We remind that a topological space $\Omega\ne \emptyset$ is \vspace{1mm}

\begin{itemize}[nolistsep]

\item[(i)] {\it regular} if, for each point $\omega\in \Omega$ and any closed subspace $A$ of $\Omega$ not-containing $\omega$, there are two disjoint open sets $U$ and $V$ such that $\omega \in U$ and $A\subseteq V$;

\item[(ii)] {\it completely regular} if, for each non-empty closed set $F$ and each point $a\in \Omega\setminus F$, there is a continuous function $f:\Omega \to [0,1]$ which such that $f(\omega)=1$ for each $\omega \in F$ and $f(a)=0$;

\item[(iii)] {\it totally disconnected} if every two different points are separated by a clopen subset of $\Omega$;

\item[(iv)] {\it locally compact} if every point of $\Omega$ has a compact neighborhood;

\item[(v)] {\it basically disconnected} if the closure of every open $F_\sigma$ subset of $\Omega$ is open.
\end{itemize}
\vspace{1mm}
Of course, (i) implies (ii).
We note that the weak topology of state-morphisms on a $\sigma$-complete $MV$-algebra is basically disconnected, see e.g. \cite[Prop 4.3]{Dvu5}.

On the set $\mathrm{MaxI}(M)$ of maximal ideals of $M$ we introduce the following hull-kernel topology $\mathcal T_M$.

\begin{prop}\label{pr:Bel6}
Let $M$ be an $EMV$-algebra. Given an ideal $I$ of $M$, let
$$
O(I) := \{A \in  \mathrm{MaxI}(M) \mid A \not\supseteq I\},
$$
and let $\mathcal T_M$ be the collection of all subsets of the above form. Then $\mathcal T_M$ defines a topology on $\mathrm{MaxI}(M)$ which is a Hausdorff one.

Given $a \in M$, we set
$$M(a)=\{I \in \mathrm{MaxI}(M) \mid a \notin I\}.
$$
Then $\{M(a)\mid a \in M\}$ is a base for $\mathcal T_M$. In addition,
we have
\vspace{1mm}
\begin{enumerate}[nolistsep]
\item[{\rm (i)}]  $M(0) = \emptyset$;
\item[{\rm (ii)}] $M(a) \subseteq  M(b)$ whenever $a \le b$,
\item[{\rm (iii)}] $M(a \wedge b) = M(a) \cap M(b)$, $M(a \vee b) = M(a)\cup  M(b)$.
\end{enumerate}
Moreover, any closed subset of $\mathcal T_M$ is of the form
$$
C(I) := \{A \in  \mathrm{MaxI}(M) : A \supseteq I\}.
$$
\end{prop}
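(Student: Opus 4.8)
\emph{Overall strategy.} The plan is to reduce every assertion to the two identities $M(a)=O((a])$ and $O(I)=\bigcup_{a\in I}M(a)$, where $(a]$ is the principal ideal generated by $a$. The first holds because for an ideal $A$ one has $A\supseteq (a]$ iff $a\in A$, so $A\not\supseteq(a]$ iff $a\notin A$; the second because $A\not\supseteq I$ means precisely that some $a\in I$ lies outside $A$. Together these say that the sets $O(I)$ are exactly the unions of sets of the form $M(a)$, so it suffices to verify that $\{M(a)\mid a\in M\}$ is a base of a topology and then to read off the remaining claims.

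\emph{The elementary items and the base property.} Item (i) is clear since $0$ lies in every ideal. For (ii), if $a\le b$ and $a\notin I$, then $b\notin I$, since $b\in I$ would force $a\in I$ by downward closure. For the join in (iii) I would use that $a\vee b\le a\oplus b$ together with closure of ideals under $\oplus$ and downward closure, giving $a\vee b\in I$ iff $a\in I$ and $b\in I$; the meet in (iii) is the one place where I need that maximal ideals are \emph{prime}. This follows at once from Lemma~\ref{le:Bel5}: a maximal ideal is $\Ker(s)$ for a state-morphism $s\colon M\to[0,1]$, and since $s$ preserves $\wedge$ and the meet in $[0,1]$ is the minimum, $s(x\wedge y)=\min\{s(x),s(y)\}$, whence $x\wedge y\in\Ker(s)$ iff $x\in\Ker(s)$ or $y\in\Ker(s)$. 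With (iii) in hand, $M(a)\cap M(b)=M(a\wedge b)$ is again a basic set, and $\bigcup_{a\in M}M(a)=\mathrm{MaxI}(M)$ because every maximal ideal is proper and hence omits some element; thus $\{M(a)\}$ is a base, the unions of its members form the topology $\mathcal T_M=\{O(I)\}$, and $\emptyset=M(0)$.

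\emph{Hausdorffness (the main obstacle).} Fix distinct maximal ideals $I\ne J$ with state-morphisms $s_I,s_J$. First I would produce a single idempotent $a$ with $I,J\in M(a)$: choosing $x_I$ with $s_I(x_I)=1$ and an idempotent $a_I\ge x_I$ gives $s_I(a_I)=1$, i.e. $a_I\notin I$, and symmetrically $a_J$; then $a:=a_I\oplus a_J$ is idempotent and $s_I(a)=s_J(a)=1$. The key observation is that on $M(a)$, where $s_K(a)=1$, one has $s_K(z)=s_K(z\wedge a)$ for every $z\in M$, so a state-morphism is determined by its restriction to the $MV$-algebra $[0,a]$. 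Consequently the restriction map $\phi_a\colon M(a)\to\mathrm{MaxI}([0,a])$, $I\mapsto I\cap[0,a]$, is well defined (by \cite[Prop~3.23]{DvZa}, $I\cap[0,a]$ is maximal since $a\notin I$), \emph{injective} by the preceding identity together with the uniqueness of the state attached to a maximal ideal, and continuous, since $\phi_a^{-1}(\{P\mid c'\notin P\})=M(a)\cap M(c')$ for $c'\in[0,a]$. As the maximal-ideal space of an $MV$-algebra is Hausdorff \cite{CDM}, pulling back disjoint neighbourhoods of $\phi_a(I)\ne\phi_a(J)$ separates $I$ and $J$ inside the open set $M(a)$, hence in $\mathrm{MaxI}(M)$. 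The part I expect to require the most care is exactly this reduction: the injectivity of $\phi_a$ and the matching of the subspace topology on $M(a)$ with the hull-kernel topology of $[0,a]$.

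\emph{Closed sets.} Finally, since the open sets are precisely the $O(I)$, the closed sets are precisely their complements; and $\mathrm{MaxI}(M)\setminus O(I)=\{A\mid A\supseteq I\}=C(I)$, which gives the stated description.
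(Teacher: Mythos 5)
Your proposal is correct, and most of it (the reduction to the identities $M(a)=O(I_a)$ and $O(I)=\bigcup_{a\in I}M(a)$, items (i)--(iii), primeness of maximal ideals for the meet, and the description of closed sets) matches the paper's argument. Where you genuinely diverge is the Hausdorff separation, which is the substantive part of the proposition. The paper separates two maximal ideals $A\neq B$ directly: it picks $x\in A\setminus B$, $y\in B\setminus A$, an idempotent $a\geq x,y$, and uses the identity $x=(x\wedge y)\oplus(x\odot\lambda_a(y))$ from (\ref{eq:x<y2}) to show $x\odot\lambda_a(y)\in A\setminus B$ and $y\odot\lambda_a(x)\in B\setminus A$; since $(x\odot\lambda_a(y))\wedge(y\odot\lambda_a(x))=0$, the basic open sets $M(y\odot\lambda_a(x))$ and $M(x\odot\lambda_a(y))$ are the required disjoint neighbourhoods. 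You instead localize: you find one idempotent $a$ outside both ideals, observe that $s(z)=s(z\wedge a)$ whenever $s(a)=1$, and transport the problem along the continuous injection $K\mapsto K\cap[0,a]$ into $\mathrm{MaxI}([0,a])$, importing Hausdorffness from the $MV$-algebra case. Both are valid; your route leans on the bijection between maximal ideals and state-morphisms and on the known result for $MV$-algebras, and it anticipates the localization-to-$[0,a]$ technique the paper uses later (e.g.\ in Theorem~\ref{th:Bel11}), whereas the paper's construction is self-contained, more elementary, and gives the extra information that the separating neighbourhoods can be taken to be \emph{basic} open sets $M(c)$ with $c$ explicitly computed from $x$ and $y$ --- a fact that is convenient in a totally disconnected, hull-kernel setting. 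Your injectivity and continuity checks for $\phi_a$ are the delicate points and they do go through as you describe.
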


\begin{proof}
We have (i) $O(\{0\})=\emptyset$, $O(M)=\mathrm{MaxI}(M)$, (ii) if $I\subseteq J$, then $O(I)\subseteq O(J)$, (iii) $\bigcup_\alpha O(I_\alpha)= O(I)$, where $I=\bigvee_\alpha I_\alpha$, and (iv) $\bigcap_{i=1}^n O(I_i)=O(\bigcap_{i=1}^n I_i)$ which implies $\{O(I)\mid I\in \mathrm{Ideal}(M)\}$ defines the topology $\mathcal T_M$ on $\mathrm{MaxI}(M)$

Given $a \in M$, let $I_a$ be the ideal of $M$ generated by $a$. Then $O(I_a)=M(a)$. Since $O(I)=\bigcup\{M(a)\mid a \in I\}$, we see that $\{M(a) \mid a \in M\}$ is a base for $\mathcal T_M$.

To see that $M(a)\cap M(b)=M(a\wedge b)$, we have trivially $M(a)\cap M(b)\supseteq M(a\wedge b)$. Let $A \in M(a)\cap M(b)$ and let $A \notin M(a\wedge b)$. Then $a\wedge b \in A$ and since $A$ is prime, either $a \in A$ or $b\in A$ which is impossible. Then $A \in M(y)$ and $B \in M(x)$.

{\it Hausdorffness.}
Let $A$ and $B$ be two maximal ideals of $M$, $A\ne B$. There are $x \in A\setminus B$ and $y \in B\setminus A$. Then $x\wedge y \in A\cap B$. Let $a$ be an idempotent of $M$ such that $x,y \le a$. Then $x\odot \lambda_a(y) \in [0,a]$. Since $x= (x\odot \lambda_a(y)) \oplus (x\wedge y)$, we see that $x\odot \lambda_a(y)\in A \setminus B$. In a similar way, we have $y\odot \lambda_a(x)\in B \setminus A$. Due to $(x\odot \lambda_a(y))\wedge (y\odot \lambda_a(x))=0$, we have also $A \in M(y\odot \lambda_a(x))$ and $B \in M(x\odot \lambda_a(y))$ and $M(y\odot \lambda_a(x))\cap M(x\odot \lambda_a(y))= M((x\odot \lambda_a(y))\wedge (y\odot \lambda_a(x)))=M(0)=\emptyset$.
\end{proof}

\begin{lem}\label{le:Bel7}
Let $M$ be an $EMV$-algebra. Then
\vspace{1mm}
\begin{itemize}[nolistsep]
\item[{\rm(i)}] If $O(I)= O(M)$, then $I=M$.
\item[{\rm(ii)}] $M(a)=M(0)$ if and only if $a\in \Rad(M)$.
\item[{\rm(iii)}]  If, for some $a\in \mathcal I(M)$, we have $M(a)=O(M)$, then $a$ is the top element of $M$ and $M$ is an $MV$-algebra.
\item[{\rm(iv)}] If, for some $x \in M$, we have $M(x)=O(M)$, then $M$ has a top element.
\item[{\rm(v)}] The space $\mathrm{MaxI}(M)$ is compact if and only if $M$ has the top element.
\end{itemize}
\end{lem}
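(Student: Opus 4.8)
The plan is to treat the five items in the order (ii), (iii), (iv), (i), (v), since (iv) and (v) will be quick consequences of (iii) and Proposition~\ref{pr:Bel6}, while the genuine content sits in (i) and (iii). Two facts will be used repeatedly: every maximal ideal of $M$ is prime (as already exploited in the Hausdorffness argument of Proposition~\ref{pr:Bel6}), and the radical contains no nonzero idempotent. The latter I would read off directly from~(\ref{eq:Rad}): if $c$ is idempotent and $c\in\Rad(M)$, then for some idempotent $a\ge c$ we have $c = 1.c \le \lambda_a(c)$, whence $c = c\wedge\lambda_a(c)=0$ because $c$ and $\lambda_a(c)$ are Boolean complements in $[0,a]$.

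Item (ii) is immediate: since $M(0)=\emptyset$ by Proposition~\ref{pr:Bel6}(i), the equality $M(a)=M(0)$ says that $a$ lies in every maximal ideal, i.e.\ $a\in\bigcap\mathrm{MaxI}(M)=\Rad(M)$. For item (iii), assume $a\in\mathcal I(M)$ with $M(a)=O(M)$, so $a\notin I$ for every maximal ideal $I$, and suppose toward a contradiction that $a$ is not the top element. Then some $y\in M$ satisfies $y\not\le a$; choosing an idempotent $d\ge y$ (every element of an $EMV$-algebra lies below an idempotent), the element $b:=a\oplus d$ is idempotent with $b>a$. Hence $c:=\lambda_b(a)$ is a nonzero idempotent satisfying $a\wedge c=0$ in the $MV$-algebra $[0,b]$. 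By the remark above, $c\notin\Rad(M)$, so there is a maximal ideal $I_0$ with $c\notin I_0$; but $a\wedge c=0\in I_0$ and $I_0$ is prime, forcing $a\in I_0$, which contradicts $a\notin I_0$. Therefore $a$ is the top element and, as every element lies below $a$, $M=[0,a]$ is an $MV$-algebra.

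Item (iv) follows at once: given $x$ with $M(x)=O(M)$, pick an idempotent $a\ge x$; monotonicity (Proposition~\ref{pr:Bel6}(ii)) gives $O(M)=M(x)\subseteq M(a)\subseteq O(M)$, so $M(a)=O(M)$ and (iii) produces a top element. For item (i) I would argue by contraposition: if $I\ne M$ is proper, it suffices to produce a single maximal ideal $A\supseteq I$, for then $A\in O(M)\setminus O(I)$. This is precisely the assertion that every proper ideal of an $EMV$-algebra lies in a maximal ideal, which I would obtain by passing to the quotient $M/I$ --- a nonzero $EMV$-algebra, hence possessing a maximal ideal by \cite[Thm 5.6]{DvZa} whose preimage under the quotient map is the desired $A$. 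This extension result is the step I expect to be the main obstacle: when $M$ has no top element, the naive Zorn's lemma argument inside $M$ breaks down, since the union of an increasing chain of proper ideals may exhaust $M$; this is exactly why I would route the proof through $M/I$ (or through a bounded interval $[0,a]$, which is a genuine $MV$-algebra with top, using \cite[Prop 3.23]{DvZa} to lift) rather than applying Zorn directly to the ideals of $M$.

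Finally, for item (v) it suffices to test covers by basic open sets. If $\mathrm{MaxI}(M)$ is compact, the sets $\{M(a)\mid a\in M\}$ cover it, since each maximal ideal is proper and hence omits some $a$; a finite subcover together with $M(a_1)\cup\cdots\cup M(a_n)=M(a_1\vee\cdots\vee a_n)$ (Proposition~\ref{pr:Bel6}(iii)) yields an $x$ with $M(x)=O(M)$, so $M$ has a top element by (iv). Conversely, if $M$ has top element $1$, it is an $MV$-algebra with $M(1)=O(M)$; from a cover $\bigcup_\alpha M(a_\alpha)=O(M)$ we get $O\big(\bigvee_\alpha I_{a_\alpha}\big)=O(M)$, hence $\bigvee_\alpha I_{a_\alpha}=M$ by (i), so $1\le n_1.a_{\alpha_1}\oplus\cdots\oplus n_k.a_{\alpha_k}=:b$ for finitely many indices. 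Since $1$ is the top, $b=1$, and from $M(u\oplus v)\subseteq M(u)\cup M(v)$ and $M(n.u)\subseteq M(u)$ we conclude $O(M)=M(b)\subseteq M(a_{\alpha_1})\cup\cdots\cup M(a_{\alpha_k})$, a finite subcover.
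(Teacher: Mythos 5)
Your proof is correct, but it reorganizes the lemma and replaces the paper's key step by a different one. For (i) the paper argues exactly as you do: a proper ideal lies in a maximal ideal, a fact the authors simply assert; your route through the quotient $M/I$ and \cite[Thm 5.6]{DvZa} is a legitimate way to supply it, though your parenthetical alternative via an interval $[0,a]$ would need more care, since a maximal ideal of $[0,a]$ containing $I\cap[0,a]$ lifts to a maximal ideal of $M$ that need not contain the elements of $I$ lying outside $[0,a]$. The real divergence is in (iii): the paper deduces it from (i) in two lines, using $M(a)=O(I_a)$ to get $I_a=M$ and hence $x\le n.a=a$ for every $x\in M$; you instead argue by contradiction, manufacturing a nonzero idempotent $c=\lambda_b(a)$ with $a\wedge c=0$, reading off from (\ref{eq:Rad}) that $\Rad(M)$ contains no nonzero idempotent, and invoking primeness of maximal ideals. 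Your argument is longer but independent of (i) and of the description of principal ideals; the paper's is shorter but leans on that description. Your (iv) is a clean reduction to (iii) by monotonicity, versus the paper's second application of the principal-ideal argument. Finally, for the converse of (v) the paper cites the known compactness of the maximal spectrum of an $MV$-algebra, whereas you give a direct argument from (i) together with the inclusions $M(u\oplus v)\subseteq M(u)\cup M(v)$ and $M(n.u)\subseteq M(u)$; this makes the lemma self-contained at the cost of a few extra lines.
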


\begin{proof}
(i) Assume $I$ is a proper ideal of $M$. There is a maximal ideal $A$ of $M$ containing $I$, then $A\notin O(I)=O(M)$ which yields a contradiction with $A \in O(M)$.

(ii) It follows from definition of $\Rad(M)$.

(iii) Let $a$ be an idempotent and let $I_a$ be the ideal of $M$ generated by $a$. From (i), we conclude $I_a=M$. Hence, if $x \in M$, then $x\in I_a$ and henceforth, there is an integer $n$ such that $x\le n.a=a$, i.e., $a$ is the top element of $M$.

(iv) Let $I_x$ be the ideal of $M$ generated by $x$. There is an idempotent $a$ of $M$ such that $x\le a$. We assert $a$ is the top element of $M$. Indeed, from (i), we have $I_x=M$, i.e. for any $z\in M$, there is an integer $n$ such that $z\le n.x$. But then $z\le n.a=a$.

(v)  Let $\mathrm{MaxI}(M)$ be a compact space. Since $\{M(x)\mid x \in M\}$ is an open covering of $\mathrm{MaxI}(M)$, there are finitely many elements $x_1,\ldots,x_n \in M$ such that $\bigcup_{i=1}^n M(x_i)=O(M)$, so that if $x_0 = x_1\vee \cdots \vee x_n$, then $M(x_0)=O(M)$ which by (iv) means that $x_0$ is the top element of $M$.

Conversely, if $M$ has the top element, then $M$ is in fact an $MV$-algebra, and the compactness of $\mathrm{MaxI}(M)$ is well known, see e.g. \cite[Prop 7.1.3]{DvPu}, \cite[Cor 12.19]{Goo}.
\end{proof}

We say that a net $\{s_\alpha\}_\alpha$ of state-morphisms on $M$ {\it converges weakly } to a state-morphism $s$ on $M$,
if $\lim_\alpha s_\alpha(a)=s(a)$. Hence, $\mathcal{SM}(M)$ is a subset of $[0,1]^M$ and if we endow $[0,1]^M$ with the product topology which is a compact Hausdorff space, we see that the weak topology, which is in fact a relative topology (or a subspace topology) of the product topology of $[0,1]^M$, yields a non-empty Hausdorff topological space whenever $M\ne \{0\}$; if $M=\{0\}$, the set $\mathcal{SM}(M)$ is empty. In addition, the system of subsets of $\mathcal{SM}(M)$ of the form $S(x)_{\alpha,\beta}=\{s \in \mathcal{SM}(M) \mid \alpha<s(x)<\beta\}$, where $x\in M$ and $\alpha < \beta$ are real numbers, forms a subbase of the weak topology of state-morphisms.

We note that $\mathcal{SM}(M)$ is closed in the product topology
whenever $M$ has a top element. In general, it is not closed because if, for a net $\{s_\alpha\}_\alpha$ of state-morphisms, there exists $s(a)=\lim_\alpha s_\alpha(a)$ for each $a\in M$, then $s$ preserves $\oplus,\vee,\wedge$, but it can happen that there is no guarantee that there is $x\in M$ such that $s(x)=1$ as  the following examples shows.

\begin{exm}\label{ex:contra}
Let $\mathcal T$ be the set of all finite subsets of the set $\mathbb N$ of natural numbers. Then $\mathcal T$ is a generalized Boolean algebra having no top element, and $\mathcal{SM}(\mathcal T)=\{s_n \mid n \in \mathbb N\}$, where $s_n(A)=\chi_A(n)$, $A \in \mathcal T$. However,  $s(A)=\lim_ns_n(A)=0$ for each $A \in \mathcal T$, so that $s$ is no state-morphism.
\end{exm}

Therefore, a non-empty set $X$ of state-morphisms is closed iff, for each net of states $\{s_\alpha\}_\alpha$ of state-morphisms from $X$, such that there exists $s(x)=\lim_\alpha s_\alpha(x)$ for each $x \in M$, then $s$ is a state-morphism on $M$ and $s$ belongs to $X$.

We note that if $x\in M$, then the function $\hat x:\mathcal{SM}(M)\to [0,1]$ defined by
$$\hat x(s):=s(x), \quad s \in \mathcal{SM}(M),
$$
is a continuous function on $\mathcal{SM}(M)$. We denote by $\widehat M=\{\hat x\mid x \in M\}$.

According to Basic Representation Theorem \ref{th:embed}, every $EMV$-algebra $M$ is either an $MV$-algebra or it can be embedded into an $MV$-algebra $N$ as its maximal ideal, so that we can assume that $M$ is an $EMV$-subalgebra of $N$ and $N=\{x\in N \mid \text{ either } x \in M \text{ or } \lambda_1(x) \in M\}$. If $M$ is a proper $EMV$-algebra, i.e. it does not contain any top element, the state-morphism space $\mathcal{SM}(N)$ can be characterized as follows.

\begin{prop}\label{pr:corr1}
Let $M$ be a proper $EMV$-algebra and, for each $x \in M$, we put $x^*=\lambda_1(x)$.
Given a state-morphism $s$ on $M$, the mapping $\tilde s: N \to [0,1]$, defined by
\begin{equation}\label{eq:state}
\tilde s(x)=\begin{cases}
s(x) & \text{ if } x\in M,\\
1-s(x_0) & \text{ if }  x=x^*_0,\ x_0 \in M,
\end{cases} \quad x \in N,
\end{equation}
is a state-morphism on $N$, and the mapping $s_\infty: N \to [0,1]$ defined by $s_\infty(x)=0$ if $x\in M$ and $s_\infty(x)=1$ if $x\notin M$, is a state-morphism on $N$.
Moreover, $\mathcal{SM}(N)=\{\tilde s \mid s \in \mathcal{SM}(M)\}\cup\{s_\infty\}$ and $\Ker(\tilde s) = \Ker(s) \cup \Ker_1^*(s)$, $s \in \mathcal{SM}(M)$, where $\Ker^*_1(s)=\{\lambda_1(x)\mid x \in \Ker_1(s)\}$.

A net $\{s_\alpha\}_\alpha$ of state-morphisms on $M$ converges weakly to a state-morphism $s$ on $M$ if and only if $\{\tilde s_\alpha\}_\alpha$ converges weakly on $N$ to $\tilde s$.
\end{prop}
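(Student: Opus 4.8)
The plan is to verify the assertions in turn, the bulk of the work being that $\tilde s$ is an $MV$-homomorphism $N\to[0,1]$; since $N$ is an $MV$-algebra, a state-morphism on $N$ is exactly such a homomorphism (and $\tilde s(1)=1$ is then automatic). First I would check that $\tilde s$ is well defined: the two clauses of \eqref{eq:state} never overlap, for if $x\in M$ and also $x=x_0^*$ with $x_0\in M$, then $1=x\oplus x_0\in M$, contradicting that $M$ is proper; and $x_0$ is determined by $x=x_0^*$ since ${}^*$ is an involution on $N$. Preservation of $0$ and $1$ is immediate ($\tilde s(0)=s(0)=0$ and $\tilde s(1)=\tilde s(0^*)=1-s(0)=1$), and preservation of ${}^*$ follows by a short case split using that $x\in M$ forces $x^*\notin M$ and conversely.

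The heart of the argument is $\tilde s(x\oplus y)=\tilde s(x)\oplus\tilde s(y)$, treated by cases on membership in $M$. When $x,y\in M$ this is just $s(x\oplus y)=s(x)\oplus s(y)$. When $x=x_0^*,\,y=y_0^*\notin M$, one uses $x\oplus y=(x_0\odot y_0)^*$ with $x_0\odot y_0\le x_0\in M$, so $\tilde s(x\oplus y)=1-s(x_0\odot y_0)=1-s(x_0)\odot s(y_0)$, and the identity $1-(u\odot v)=(1-u)\oplus(1-v)$ in $[0,1]$ closes the case. The genuinely delicate case is $x\in M$, $y=y_0^*\notin M$: here $x\oplus y=(x^*\odot y_0)^*$, and although $x^*\notin M$, the element $x^*\odot y_0$ lies in $M$ since it is $\le y_0$. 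The main obstacle is to evaluate $s(x^*\odot y_0)$ using only the $M$-structure, and I would resolve it with the key identity $x^*\odot y_0=\lambda_a(x)\odot y_0$, valid for any idempotent $a\in\mI(M)$ with $x,y_0\le a$: indeed $\lambda_a(x)=a\odot x^*$ and $a\odot y_0=a\wedge y_0=y_0$, so $\lambda_a(x)\odot y_0=x^*\odot(a\odot y_0)=x^*\odot y_0$. Since $\lambda_a(x),y_0\in M$, and $s$ preserves $\odot$ and $\lambda_a$ while sending the idempotent $a$ to $s(a)\in\{0,1\}$, one obtains $s(x^*\odot y_0)=\lambda_{s(a)}(s(x))\odot s(y_0)$, which in both sub-cases $s(a)=1$ and $s(a)=0$ equals $s(y_0)\ominus s(x)=1-(s(x)\oplus(1-s(y_0)))$, exactly as needed. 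That $s_\infty$ is an $MV$-homomorphism is analogous but easier: its kernel is the prime ideal $M$, so preservation of $\oplus$ reduces to the remark that $x\oplus y\in M$ forces $x\in M$ in the relevant direction.

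For $\mathcal{SM}(N)=\{\tilde s\mid s\in\mathcal{SM}(M)\}\cup\{s_\infty\}$, the inclusion $\supseteq$ is the content of the previous paragraph. For $\subseteq$, take $t\in\mathcal{SM}(N)$ and split on its kernel, a maximal ideal of $N$. If $\Ker(t)=M$, then $t=s_\infty$ by uniqueness of the state-morphism with a prescribed kernel. If $\Ker(t)\neq M$, I claim there is an idempotent $a\in\mI(M)$ with $t(a)=1$: otherwise every $a\in\mI(M)$ lies in $\Ker(t)$, and since each element of $M$ is dominated by such an $a$ (axiom (iv)) and $\Ker(t)$ is downward closed, this yields $M\subseteq\Ker(t)$, whence $M=\Ker(t)$ by maximality, a contradiction. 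Thus $s:=t|_M$ is a genuine state-morphism on $M$ (an $EMV$-homomorphism attaining the value $1$ at $a$), and $t=\tilde s$ because $t$ agrees with $s$ on $M$ and $t(x_0^*)=1-t(x_0)=1-s(x_0)$ on $N\setminus M$.

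Finally, the kernel formula is a direct reading of \eqref{eq:state}: for $x\in M$ we have $\tilde s(x)=0\Leftrightarrow x\in\Ker(s)$, while for $x=x_0^*\notin M$ we have $\tilde s(x)=1-s(x_0)=0\Leftrightarrow x_0\in\Ker_1(s)\Leftrightarrow x\in\Ker^*_1(s)$. The statement about nets follows at once from the shape of $\tilde s$: on $M$ one has $\tilde s_\alpha=s_\alpha$ and $\tilde s=s$, so weak convergence on $N$ restricts to weak convergence on $M$, and conversely $\tilde s_\alpha(x_0^*)=1-s_\alpha(x_0)\to 1-s(x_0)=\tilde s(x_0^*)$ recovers convergence at the remaining points of $N$.
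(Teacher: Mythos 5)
Your proof is correct and follows essentially the same route as the paper's: a case analysis on membership in $M$ for the $\oplus$-homomorphism property (with the mixed case reduced to an element of $M$ via $x\oplus y_0^*=(x^*\odot y_0)^*$), followed by classifying every state-morphism on $N$ according to whether some idempotent of $M$ is sent to $1$. The only cosmetic difference is that the paper performs the mixed-case computation inside the Mundici representation $N=\Gamma(G,u)$ after fixing an idempotent $b$ with $s(b)=1$, whereas you stay within the $MV$/$EMV$ operations using $\lambda_a(x)=a\odot x^*$ and treat both values of $s(a)$; you also make explicit the well-definedness of $\tilde s$, which the paper leaves tacit.
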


\begin{proof}
Assume that $N=\Gamma(G,u)$ for some unital Abelian $\ell$-group $(G,u)$. Then $1=u$ and $x^*=\lambda_1(x)=u-x$, where $-$ is the subtraction taken from the $\ell$-group $G$.

Take $s \in \mathcal{SM}(M)$. We have $\tilde s(1)=1$. If $x,y \in M$, then $\tilde s(x\oplus y)= \tilde s(x)\oplus \tilde s(y)$. If $x=x^*_0$, $y=y^*_0$ for $x_0,y_0\in M$, then $x\oplus y = (x_0\odot y_0)^*$, so that $\tilde s(x\oplus y)=1-\tilde s(x_0\odot y_0)=(1-s(x_0))\oplus (1-s(y_0))=\tilde s(x)\oplus \tilde s(y)$. Finally, if $x=x_0$, $y=y^*_0$ for $x_0,y_0\in M$, there exists an idempotent $b\in \mathcal I(M)$ such that $x_0,y_0 \le b$ and $s(b)=1$. Since $x\oplus y =x_0\oplus y_0^*= (y_0\odot x^*_0)^*= (y_0\odot \lambda_b(x_0))^*$ which yields $\tilde s(x\oplus y)=1-s(y_0\odot \lambda_b(x_0))= 1-(s(y_0)\odot (s(b)-s(x_0)) =(1-s(y_0))\oplus s(x_0) = \tilde s(x)\oplus s(y)$. Whence, $\tilde s$ is a state-morphism on $N$.

It is easy to verify that $s_\infty$ is a state-morphism on $N$. We note that the restriction of $s_\infty$ onto $M$ is not a state-morphism on $M$ because it is the zero function on $M$.

We note that $$\mathcal I(N)=\{x \in N \mid \text{ either } x\in \mathcal I(M) \text{ or } x^* \in \mathcal I(M)\}.$$

Let $s$ be a state-morphism on $N$. We have two cases: (i) There is an idempotent $a \in M$ such that $s(a)=1$, then the restriction $s_0$ of $s$ onto $M$ is a state-morphism on $M$, so that $s=\widetilde{s_0} \in \mathcal {SM}(N)$. (ii) For each idempotent $a\in M$, $s(a)=0$. Since given $x\in M$, there is an idempotent $a\in \mathcal I(M)$ with $x\le a$, we have $s(x)=0$ for each $x \in M$ which says $s=s_\infty$.

The last assertions are evident.
\end{proof}

The latter proposition can be illustrated by the following example:

\begin{exm}\label{ex:LS10}
Let $\mathcal T$ be the system of all finite subsets of the set $\mathbb N$ of integers. Then $\mathcal T$ is an $EMV$-algebra that is a generalized Boolean algebra of subsets, $\mathcal T$ has no top element,  $\mathcal{SM}(\mathcal T)=\{s_n\mid n \in \mathbb N\}$ where $s_n=\chi_A(n)$, $A \in \mathcal T$.
If we define $\mathcal N$ as the set of all finite or co-finite subsets of $\mathbb N$, $\mathcal N$ is an $MV$-algebra such that $\mathcal N=\{A \subseteq \mathbb N \mid \text{ either } A \in \mathcal T \text{ or } A^c \in \mathcal T\}$, and $\mathcal N$ is representing $\mathcal T$. Then $\mathcal{SM}(\mathcal N)=\{\tilde s_n\mid n \in \mathbb N\} \cup \{s_\infty\}$, where $\tilde s_n = \chi_A(n)$, $A\in \mathcal N$, and $s_\infty(A) =0$ if $A$ is finite and $s_\infty(A)=1$ if $A$ is co-finite.
In addition, $\lim_n s_n(A)=0$ for each $A \in \mathcal T$ and $\lim_n \tilde s_n(A)=s_\infty(A)$, $A \in \mathcal N$.
\end{exm}

\begin{rmk}\label{rm:Bel}
Since a net $\{s_\alpha\}_\alpha$ of state-morphisms of $M$ converges weakly to a state-morphism $s\in \mathcal{SM}(M)$ iff $\{\tilde s_\alpha\}_\alpha$ converges weakly on $N$ to $\tilde s$, the mapping $\phi:\mathcal{SM}(M)\to \mathcal{SM}(N)$, defined by $\phi(s)=\tilde s$, $s \in \mathcal{SM}(M)$, is injective and continuous, $\phi(\mathcal{SM}(M))$ is open,  but $\phi$ is not necessarily closed, see Example \ref{ex:LS10}. We have $\phi$ is closed iff $M$ possesses a top element.
\end{rmk}

\begin{proof}
If $x\in M$, then $S_N(x)=\{s\in \mathcal{SM}(N)\mid s(x)>0\}= \widetilde{S(x)}:=\{\tilde s \mid s\in S(x)\}$, where $S(x)=\{s\in \mathcal{SM}(M)\mid s(x)>0\}$. Clearly $s_\infty \notin  S_N(x)$ and $S_N(x)$ is an open set of $\mathcal{SM}(N)$. Therefore, for each $\tilde s$, there is an open set of $\mathcal{SM}(N)$, namely $S_N(x)$, which contains $\tilde s$ and $\tilde s\in S_N(x)\subseteq \phi(X)$.  Whence $\phi(X)$ is open in $\mathcal{SM}(N)$.

If $M$ has a top element, then $N=M$ and $\phi$ is the identity, so it is closed and open as well. Conversely, let $\phi$ be closed, then $\phi(X)$ is closed and compact, where $X=\mathcal{SM}(M)$.

Hence, for each open subset $O$ of $\mathcal{SM}(M)$, we have $\phi(O)=\phi(X\setminus C)= \phi(X)\setminus \phi(C)$, where $C$ is a closed subset of $\mathcal{SM}(M)$, so that $\phi$ is an open mapping. Now let $\{O_\alpha\mid \alpha\in A\}$ be an open covering of $X$, then $\phi(X)=\phi(\bigcup_\alpha O_\alpha)=\bigcup_\alpha \phi(O_\alpha)$, and the compactness of $\phi(X)$ yields $\phi(X)=\bigcup_{i=1}^n \phi(O_{\alpha_i})$, so that $X=\bigcup_{i=1}^n O_{\alpha_i}$ which says
$\mathcal{SM}(M)$ is compact. Since $X = \bigcup \{S(x)\mid x \in M\}$, there are finitely many elements $x_1,\ldots,x_k\in M$ such that $X=\bigcup_{i=1}^k S(x_i)=S(x_0)$, where $x_0=x_1\vee\cdots\vee x_k$. If $I_{x_0}$ is the ideal of $M$ generated by $x_0$, then $S(x_0)=\{s\in \mathcal{SM}(M)\mid \Ker(s)\varsupsetneq I_{x_0}\}$, so that $O(I_{x_0})=O(M)= M(x_0)$ which, by Lemma \ref{le:Bel7}(iv), gives $M$ has a top element.
\end{proof}

\begin{prop}\label{pr:Bel8}
Let $M$ be an $EMV$-algebra and $X$ be a non-empty subspace of state-morphisms on $M$ that is closed in the weak topology of state-morphisms. Let $t$ be a state-morphism such that $t \notin X$.
There exists an $a \in M$ such that $t(a) > 1/2$ while $s(a) < 1/2$ for all $s \in X$. Moreover, the element $a\in M$ can be chosen such that $t(a)=1$ and $s(a)=0$ for each $s \in X$.

In particular, the space $\mathcal{SM}(M)$ is completely regular.
\end{prop}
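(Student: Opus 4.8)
The plan is to localize the problem to finitely many coordinates and then to manufacture, inside the $MV$-algebras $[0,b]$, an actual algebra element that performs the separation; the tool for the second task is McNaughton's description of one-variable $MV$-terms.

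Since $X$ is closed and $t\notin X$, its complement is an open neighbourhood of $t$, so there are $x_1,\dots,x_n\in M$ and reals with $t\in\bigcap_{i=1}^{n}S(x_i)_{\alpha_i,\beta_i}\subseteq\mathcal{SM}(M)\setminus X$, i.e. $\alpha_i<t(x_i)<\beta_i$ for each $i$. Shrinking the intervals I may take all $\alpha_i,\beta_i$ rational, and for simplicity I assume each constraint is genuinely two-sided with $0\le\alpha_i<\beta_i\le 1$ (a one-sided constraint, arising when $\alpha_i<0$ or $\beta_i>1$, is treated by keeping only the relevant half of the construction below). The decisive combinatorial consequence is that for every $s\in X$ there is an index $i$ with $s(x_i)\le\alpha_i$ or $s(x_i)\ge\beta_i$. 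Next I fix one idempotent $b\in\mathcal I(M)$ with $x_1\vee\cdots\vee x_n\le b$ and $t(b)=1$: such $b$ exists because some $y\in M$ has $t(y)=1$, any idempotent above $y\vee x_1\vee\cdots\vee x_n$ serves, and for every idempotent $b$ and every state-morphism $s$ one has $s(b)\in\{0,1\}$ (the only idempotents of $[0,1]$ are $0$ and $1$). I then work in the $MV$-algebra $[0,b]$, using two facts: if $s(b)=1$ then $s|_{[0,b]}$ is an $MV$-homomorphism into $[0,1]$, so $s(\lambda_b(z))=1-s(z)$; and if $s(b)=0$ then $s(z)=0$ for all $z\le b$.

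The core step builds, for each $i$, a bump $a_i\in[0,b]$ with $t(a_i)=1$ that vanishes on the bad set of the $i$-th coordinate. Pick an integer $N$ with $N\alpha_i\in\mathbb Z$ and $N\big(t(x_i)-\alpha_i\big)\ge 1$; then $g(r)=\max\{0,\min\{1,N(r-\alpha_i)\}\}$ is a one-variable McNaughton function, hence equals the term function of some $MV$-term $\tau$ by \cite{CDM}, and $a_i^{\mathrm{low}}:=\tau(x_i)$ computed in $[0,b]$ is an element of $M$. Dually, from $h(r)=\max\{0,\min\{1,N'(\beta_i-r)\}\}$ (with $N'\beta_i\in\mathbb Z$ and $N'(\beta_i-t(x_i))\ge 1$) I obtain $a_i^{\mathrm{high}}\in M$. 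Since $t(b)=1$, evaluation gives $t(a_i^{\mathrm{low}})=g(t(x_i))=1$ and $t(a_i^{\mathrm{high}})=h(t(x_i))=1$. On the other hand, if $s(x_i)\le\alpha_i$ then $s(a_i^{\mathrm{low}})=0$, and if $s(x_i)\ge\beta_i$ then $s(a_i^{\mathrm{high}})=0$: indeed on such a set either $s(b)=0$, whence the value is $0$ because $a_i^{\bullet}\le b$, or $s(b)=1$, whence the value is $g$, respectively $h$, evaluated at $s(x_i)$, which is $0$ by the choice of $g,h$. (For the high side the case $s(b)=0$ cannot even occur, since $s(x_i)\ge\beta_i>0$ forces $s(b)=1$.)

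Now set $a_i:=a_i^{\mathrm{low}}\wedge a_i^{\mathrm{high}}$ and $a:=a_1\wedge\cdots\wedge a_n\in[0,b]\subseteq M$. Then $t(a)=1$, while for $s\in X$, choosing the index $i$ witnessing $s\notin\bigcap_j S(x_j)_{\alpha_j,\beta_j}$, we get $s(a)\le s(a_i)=0$. Thus $t(a)=1$ and $s(a)=0$ for all $s\in X$, which is the stronger ``moreover'' assertion and a fortiori gives $t(a)>1/2>s(a)$. Complete regularity then follows at once: for a non-empty closed $F$ and $t\notin F$, applying the above with $X=F$ yields $a\in M$ with $t(a)=1$ and $s(a)=0$ on $F$, so the continuous function $f:=1-\hat a$ on $\mathcal{SM}(M)$ satisfies $f\equiv 1$ on $F$ and $f(t)=0$, as required. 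The main obstacle is precisely the core step — exhibiting a genuine element of $M$, not merely a continuous function, that separates the values — and it is overcome by realizing the integer-coefficient piecewise-linear functions $g,h$ as $MV$-terms; the only delicate point is that the negation $\lambda_b$ exists only inside $[0,b]$, so the identity $s(\lambda_b(z))=1-s(z)$ is available exactly where it is needed, namely on the high-side set $\{s:s(x_i)\ge\beta_i\}$, where $s(x_i)\ge\beta_i>0$ forces $s(b)=1$.
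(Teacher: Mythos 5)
Your proof is correct, but it reaches the conclusion by a genuinely different route than the paper. The paper separates $t$ from $X$ pointwise: for each $s\in X$ it picks $x\in \Ker(t)\setminus\Ker(s)$, forms $a=\lambda_b(n.x)$ with $t(a)=1$ and $s(a)<1/2$, covers $X$ by the open sets $W(a)=\{s\mid s(a)<1/2\}$, and extracts a finite subcover by a compactness argument that requires embedding $M$ into the representing $MV$-algebra $N$ and showing $\phi(X)\cup\{s_\infty\}$ is compact in $\mathcal{SM}(N)$; the meet of the finitely many $a$'s gives only the $1/2$-separation, which is then upgraded to $\{0,1\}$-separation by the algebraic manipulation $a\odot\lambda_b(a\wedge\lambda_b(a))=a\odot a$ followed by passing to $r.(a\odot a)$. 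You instead use closedness of $X$ only to extract one basic neighbourhood $\bigcap_i S(x_i)_{\alpha_i,\beta_i}$ of $t$ missing $X$, and then manufacture the separating element directly by realizing the clamped linear functions $g,h$ as one-variable McNaughton terms evaluated in the local $MV$-algebra $[0,b]$; the case split on $s(b)\in\{0,1\}$ and the observation that $s|_{[0,b]}$ is an $MV$-homomorphism exactly when $s(b)=1$ are handled correctly, as is the delicate point that $s(x_i)\ge\beta_i>0$ forces $s(b)=1$ on the high side. What your approach buys is the elimination of the compactness argument and of the two-step refinement — you get $t(a)=1$, $s(a)=0$ in one stroke — at the price of importing McNaughton's theorem (albeit only its easy one-variable instance) and some bookkeeping with rational endpoints; what the paper's approach buys is self-containedness within the $EMV$ machinery and, as a by-product, the compactness of $\phi(X)\cup\{s_\infty\}$, which is reused later in the proof of Theorem \ref{th:Bel10}.
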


\begin{proof}
(1) Let $t$ be a state-morphism such that $t \notin X$.
We assert that there exists an $a \in M$ such
that $t(a) > 1/2$ while $s(a) < 1/2$ for all $s \in X.$

Indeed, set $A= \{a\in M:\ t(a) > 1/2\}$, and for all $a \in A$,
let
$$W(a) :=\{s\in \mathcal{SM}(M))\mid\ s(a) < 1/2\},
$$
which is an open subset of $\mathcal{SM}(M)$. We note that $A\ne \emptyset$ and $A$ is downward directed and closed under $\oplus$.

We assert that these open subsets cover $X$. Consider any $s \in
X$. Since $\Ker(s)$ and $\Ker(t)$ are non-comparable subsets of
$M$, there exists $x \in \Ker(t)\setminus \Ker(s).$ Hence $t(x) =
0$ and $s(x) > 0$. Choose an idempotent $b \in M$ such that $x\le b$ and $t(b)=1$. There exists an integer $n \ge 1$ such that
$s(n.x) > 1/2$. Since there is also an integer $k$ such that $s(k.x)= k.s(x)=1$ and $k.x\le b$, we conclude $s(b)=1$.
Due to $t$ is a state-morphism, we have
$t(n.x) = 0.$ Putting $a = \lambda_b(n.x)$, we have $t(a) = 1
> 1/2$ and $s(a) < 1/2.$ Therefore $\{W(a)\mid a \in A\}$ is an
open covering of $X$.

(i) If $M$ has a top element, the state-morphism space $\mathcal{SM}(M)$ is compact and Hausdorff, so that $X$ is compact, and $X \subseteq W(a_1)\cup\cdots\cup W(a_n)$ for some $a_1,\ldots, a_n \in A$.

(ii) If $M$ has no top element,
embed $M$ into the $MV$-algebra $N$ as its maximal ideal. Since $s(1)=1$ for each state-morphism $s$ on $N$, we see that $\mathcal{SM}(N)$ is a compact set in the product topology, consequently, it is compact in the weak topology of state-morphisms on $N$. The  mapping $\phi: \mathcal{SM}(M)\to \mathcal{SM}(N)$ defined by $\phi(s)=\tilde s$, where $\tilde s$ is defined through (\ref{eq:state}), is by Proposition \ref{pr:corr1} injective and continuous.

We assert the set $\phi(X)\cup\{s_\infty\}$ is a compact subset of $\mathcal{SM}(N)$. Indeed, let $\{s_\alpha\}_\alpha$ be a net of state-morphisms from $\phi(X)\cup\{s_\infty\}$. Since $\mathcal{SM}(N)$ is compact, there is a subnet $\{s_{\alpha_\beta}\}_\beta$ of the net $\{s_\alpha\}_\alpha$ converging weakly to a state-morphism $s$ on $N$. If $s=s_\infty$, $s\in \phi(X)\cup \{s_\infty\}$. If $s\ne s_\infty$, there is a state-morphism $s_0\in \mathcal{SM}(M)$ such $s=\tilde s_0$. Then there is $\beta_0$ such that for each $\beta>\beta_0$, $s_{\alpha_\beta}\in X$.  Therefore, $s_0\in X$ and $s=\phi(s_0)\in \phi(X)\cup\{s_\infty\}$. We note that $\tilde t \notin \phi(X)\cup\{s_\infty\}$.

For each $a\in A$, let $\widetilde W(a):=\{s\in \mathcal{SM}(N) \mid s(a)< 1/2\}$. Then $\tilde t(a)=t(a)>1/2$ and $0=s_\infty(a)<1/2$, so that $s_\infty \in \widetilde W(a)$ for each $a \in A$. Then $\{\widetilde W(a)\mid a \in A\}$ is an open covering of the compact set $\phi(X)\cup \{s_\infty\}$. There are $a_1,\ldots,a_n \in A$ such that $\phi(X)\cup\{s_\infty\} \subseteq \widetilde W(a_1)\cup \cdots \cup \widetilde W(a_n)$, consequently, $X \subseteq W(a_1)\cup \cdots \cup W(a_n)$. Put $a = a_1 \wedge \cdots \wedge a_n$. Then $a \in A$ and for each $s \in X$, we have $s(a) \le s(a_i) < 1/2$ for $i =1,\ldots, n$, which proves $X \subseteq W(a)$, i.e., $s(a) <  1/2$ for all $s \in X$.

(2) By the first part of the present proof, there exists an $a \in
M$ such that $t(a) > 1/2$ while $s(a) < 1/2$ for all $s \in X$. In addition, there is an idempotent $b$ of $M$ with $a\le b$ and $t(b)=1$.
Then $t(a\wedge \lambda_b(a)) = t(\lambda_b(a))$ and $t(a \odot\lambda_b(a \wedge \lambda_b(a))) = t(a) - t(a\wedge \lambda_b(a))= t(a) - t(\lambda_b(a)) = 2t(a) - 1 > 0$.

Now let $s$ be an arbitrary element of $X$. If $s(a)=0$,  then $s(a\odot\lambda_b(a\wedge \lambda_b(a))) = 0$. If $s(a)>0$, there is an integer $m_s$ such that $s(m_s.a)=m_s.s(a)=1$ and since $m_s.a\le m_s.b=b$, we have $s(b)=1$. Hence, $s(a\wedge \lambda_b(a))=s(a)$, so that $s(a\odot\lambda_b(a\wedge \lambda_b(a))) = s(a)
- s(a\wedge \lambda_b(a)) = 0$. In any case, the element $a \odot \lambda_b(a \wedge \lambda_b(a))$ is an element of $\bigcap\{\Ker(s) \mid s \in X\}$ for which $t(a\odot \lambda_b(a \wedge \lambda_b(a))) > 0$.

(3) From (1) and (2), we have concluded that if we use (\ref{eq:x<y2}), then $a \odot \lambda_b(a \wedge \lambda_b(a))=a\odot a$ and $s(a\odot a)=0$ for each $s \in X$. In addition, $t(a\odot a) >0$. There is an integer $r$ such that $t(r.(a\odot a))=r.t(a\odot a) =1$ and $s(r.(a\odot a))=0$ for each $s\in X$. Hence, for $x=r.(a\odot a)$, we have $\hat x(X)=0$ and $\hat x(t)=1$. Consequently, for the continuous function $f$ on $\mathcal{SM}(M)$ defined by $f(s)=1-\hat x(s)$, we have $f(X)=1$ and $f(t)=0$, so that $\mathcal{SM}(M)$ is completely regular.
\end{proof}

\begin{thm}\label{th:Bel9}
Let $M$ be an $EMV$-algebra. The mapping $\theta: \mathcal{SM}(M) \to \mathrm{MaxI}(M)$, defined by $s\mapsto \Ker(s)$, is a homeomorphism. In addition, the following statements are equivalent: \vspace{1mm}

\begin{itemize}[nolistsep]
\item[{\rm (i)}] $M$ has a top element.
\item[{\rm (ii)}] $\mathcal{SM}(M)$ is compact in the weak topology of state-morphisms
\item[{\rm (iii)}] $\mathrm{MaxI}(M)$ is compact in the hull-kernel topology.
\end{itemize}
\end{thm}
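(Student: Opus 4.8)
The plan is to establish the homeomorphism $\theta$ first, and then deduce the equivalence of (i)--(iii) by combining it with the compactness criteria already proved for each space separately. By Lemma \ref{le:Bel5}, the map $s\mapsto\Ker(s)$ is a well-defined bijection between $\mathcal{SM}(M)$ and $\mathrm{MaxI}(M)$, so the only work is to show that $\theta$ and $\theta^{-1}$ are continuous, i.e. that the weak topology of state-morphisms and the hull-kernel topology $\mathcal{T}_M$ correspond to each other under this bijection. The natural route is to match up the subbasic open sets of the two topologies. On the state-morphism side, the subbase consists of the sets $S(x)_{\alpha,\beta}=\{s\mid \alpha<s(x)<\beta\}$; on the maximal-ideal side, Proposition \ref{pr:Bel6} gives the base $\{M(a)\mid a\in M\}$, where $M(a)=\{I\mid a\notin I\}$.

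First I would show that $\theta$ is continuous by checking that $\theta^{-1}(M(a))$ is open in $\mathcal{SM}(M)$ for each $a\in M$. The key observation is that $a\notin\Ker(s)$ means exactly $s(a)>0$, so $\theta^{-1}(M(a))=\{s\mid s(a)>0\}=S(a)_{0,2}$, which is subbasic open; hence $\theta$ is continuous. For the reverse direction I would show $\theta$ is an open map by proving that the image under $\theta$ of a basic open set of $\mathcal{SM}(M)$ is open in $\mathrm{MaxI}(M)$. Here one must translate a condition of the form $\alpha<s(x)<\beta$ into a statement about which elements lie in $\Ker(s)$; the tools for this are the idempotent $b\ge x$ with $s(b)=1$, together with the scaling trick used repeatedly in Proposition \ref{pr:Bel8}: for $s(x)>0$ one can pass to $n.x$ and use that $s(n.x)=n.s(x)$ saturates to $1$, while $\lambda_b(m.x)$ isolates the complementary threshold. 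Concretely, a set like $\{s\mid s(x)>\alpha\}$ should be expressible as $M(y)$ for a suitable element $y$ built from $x$ and $b$ via $\oplus$, $\odot$, and $\lambda_b$, exactly as the element $r.(a\odot a)$ was constructed in part (3) of the proof of Proposition \ref{pr:Bel8}. I expect this matching of thresholds to be the main obstacle, since the correspondence between the real-valued conditions on $s(x)$ and the purely ideal-theoretic conditions defining $M(y)$ requires care about which thresholds are attainable; however, the Hausdorffness of both spaces (Proposition \ref{pr:Bel6} and the remark that $\mathcal{SM}(M)$ is Hausdorff) plus continuity of $\theta$ already forces $\theta$ to be a closed, hence homeomorphic, map once we know the spaces behave well, so an alternative is to invoke compactness-type arguments locally rather than exhibit $y$ explicitly.

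Finally, for the equivalence I would argue as follows. The implications (i) $\Leftrightarrow$ (iii) are exactly Lemma \ref{le:Bel7}(v), which states that $\mathrm{MaxI}(M)$ is compact if and only if $M$ has a top element. Since $\theta$ is a homeomorphism, $\mathcal{SM}(M)$ is compact if and only if $\mathrm{MaxI}(M)$ is compact, giving (ii) $\Leftrightarrow$ (iii) immediately. Chaining these yields (i) $\Leftrightarrow$ (ii) $\Leftrightarrow$ (iii), completing the proof. The bulk of the genuine content sits in the homeomorphism claim; the equivalences are then a formal consequence of transporting compactness across $\theta$ together with the already-established criterion in Lemma \ref{le:Bel7}(v).
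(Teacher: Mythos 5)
Your bijection, the continuity of $\theta$, and the derivation of (i)--(iii) from Lemma \ref{le:Bel7}(v) once the homeomorphism is in hand all agree with the paper. The gap is in the half you yourself flag as ``the main obstacle'': continuity of $\theta^{-1}$. Your primary route (writing each $\{s\mid \alpha<s(x)<\beta\}$ as a union of sets $M(y)$) is never carried out, and your fallback is not valid: a continuous bijection between Hausdorff spaces need not be closed, and the compactness that would rescue such an argument is exactly what is at stake in (i)--(iii) --- these spaces are compact precisely when $M$ has a top element, and even their local compactness is only established later (Theorem \ref{th:Bel11}), whose proof itself uses the present theorem. So neither branch of your argument closes.

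The paper's device is to prove directly that $\theta$ is a \emph{closed} map, using Proposition \ref{pr:Bel8} as a separation theorem rather than merely as a source of scaling tricks: for a closed $X\subseteq\mathcal{SM}(M)$ put $\Ker(X)=\{x\in M\mid s(x)=0 \text{ for all } s\in X\}$; the inclusion $\theta(X)\subseteq C(\Ker(X))$ is trivial, and if $t\notin X$ then Proposition \ref{pr:Bel8} yields $a\in M$ with $s(a)=0$ for every $s\in X$ and $t(a)=1$, so $a\in\Ker(X)\setminus\Ker(t)$ and $\theta(t)\notin C(\Ker(X))$. Hence $\theta(X)=C(\Ker(X))$ is closed and $\theta$ is a homeomorphism. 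The minimal repair of your proposal is to replace the open-map step by this closed-map step. Your subbasic-open-set route could in principle be completed (for instance $\{s\mid s(x)>1/2\}=\theta^{-1}(M(x\odot x))$, and similarly for other rational thresholds, with a separate treatment of the upper bounds via $\lambda_b$ and the sets where $s(b)=0$), but that requires genuinely more work than you indicate and is not the paper's argument.
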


\begin{proof}
Define a mapping $\theta$ on the set of state-morphisms  $\mathcal{SM}(M)$ with values in $\mathrm{MaxI}(M)$ as follows $\theta(s)= \Ker(s)$, $s \in \mathcal{SM}(M)$. By \cite[Thm 4.2]{DvZa}, $\theta$ is a bijection.
Let $C(I)$ be any closed subspace of $\mathrm{MaxI}(M)$. Then
$$
\theta^{-1}(C(I))=\{s \in \mathcal{SM}(M) \mid s(x)=0 \text{ for all } x \in I\}
$$
which is a closed subset of $\mathcal{SM}(M)$. Therefore, $\theta$ is continuous.

Given a non-empty subset $X$ of $\mathcal{SM}(M)$, we set
$$\Ker(X):=\{x \in M \mid s(x)=0 \text{ for all } s \in X\}.
$$
Then $\Ker(X)$ is an ideal of $M$. If, in addition, $X$ is a closed subset of $\mathcal{SM}(M)$, we assert
\begin{equation}\label{eq:Ker}
\theta(X)=C(\Ker(X)).
\end{equation}

The inclusion $\theta(X)\subseteq C(\Ker(X))$ is evident. By Proposition \ref{pr:Bel8}, if $t\notin X$, there is an element $a\in M$ such that $s(a)=0$ for each $s\in X$ and $t(a)=1$. Consequently, $t \notin X$ implies $\theta(t) \notin C(\Ker(X))$, and $C(\Ker(X)) \subseteq \theta(X)$.
As a result, we conclude $\theta$ is a homeomorphism.

(i) $\Rightarrow$ (ii) If $1$ is a top element of $M$, then $s(1)=1$ for each state-morphism $s$, therefore, $\mathcal{SM}(M)$ is a closed subspace of $[0,1]^M$, consequently, it is compact.

(ii) $\Rightarrow$ (iii) Let $\{O_\alpha\}$ be an open cover of $\mathrm{MaxI}(M)$. It is enough to take a cover of the form $\{O(x_\alpha)\}$. Then $\mathcal{SM}(M)=\theta^{-1}(\mathrm{MaxI}(M))=\bigcup_\alpha \theta^{-1}(O(x_\alpha))$. Hence, there are finitely many indices $\alpha_1,\ldots, \alpha_n$ such that $\mathcal{SM}(M)=\bigcup_{i=1}^n \theta^{-1}(O(x_{\alpha_i}))$ and consequently, $\bigcup_{i=1}^n O(x_{\alpha_i})$, which entails $\mathrm{MaxI}(M)$ is compact.

(iii) $\Leftrightarrow$ (i) It was proved in Lemma \ref{le:Bel7}(v).
\end{proof}

\begin{thm}\label{th:Bel10}
Let $M$ be an $EMV$-algebra with the general comparability property. Then the mapping $\xi: \mathrm{MaxI}(M) \to \mathrm{MaxI}(\mathcal I(M))$ defined by $\xi(A)=A\cap \mathcal I(M)$, $A \in \mathrm{MaxI}(M)$, is a homeomorphism.

In addition, the spaces $\mathcal{SM}(\mathcal I(M))$, $\mathcal{SM}(M)$, $\mathrm{MaxI}(\mathcal I(M))$, and $\mathrm{MaxI}(\mathcal I(M))$ are mutually homeomorphic topological spaces.

Any of the topological spaces is compact if and only if $M$ has a top element.
\end{thm}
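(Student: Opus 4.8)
The plan is to reduce everything to the single homeomorphism $\xi\colon\mathrm{MaxI}(M)\to\mathrm{MaxI}(\mathcal I(M))$ and then glue in the two state-morphism spaces by means of Theorem \ref{th:Bel9}. First I would record that the idempotents $\mathcal I(M)$ form a generalized Boolean algebra, hence an $EMV$-algebra in its own right, so that Theorem \ref{th:Bel9} applies both to $M$ and to $\mathcal I(M)$ and produces homeomorphisms $\theta_M\colon\mathcal{SM}(M)\to\mathrm{MaxI}(M)$ and $\theta_{\mathcal I(M)}\colon\mathcal{SM}(\mathcal I(M))\to\mathrm{MaxI}(\mathcal I(M))$. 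Granting that $\xi$ is a homeomorphism, the four spaces (reading the last one as $\mathrm{MaxI}(M)$) become mutually homeomorphic through the composite $\theta_{\mathcal I(M)}^{-1}\circ\xi\circ\theta_M$; and since $\mathrm{MaxI}(M)$ is compact exactly when $M$ has a top element by Lemma \ref{le:Bel7}(v), the compactness clause transports along these homeomorphisms.

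For $\xi$ itself I would argue in three stages. Well-definedness is cleanest through state-morphisms: if $A=\Ker(s)$, then, because every idempotent satisfies $s(e)\oplus s(e)=s(e)$ and hence $s(e)\in\{0,1\}$, the restriction $s|_{\mathcal I(M)}$ is a ($\{0,1\}$-valued) state-morphism on $\mathcal I(M)$ whose kernel is exactly $A\cap\mathcal I(M)$; the witness $a$ with $s(a)=1$ can be chosen idempotent by condition (iv). For surjectivity, given a maximal ideal $\mathfrak m$ of $\mathcal I(M)$ I would show the ideal $\langle\mathfrak m\rangle_M=\{x\in M:x\le e\text{ for some }e\in\mathfrak m\}$ is proper (otherwise every idempotent would lie below some $e\in\mathfrak m$, forcing $\mathfrak m=\mathcal I(M)$), extend it to a maximal ideal $A$ of $M$ (the extension of a proper ideal to a maximal one being available by \cite[Thm 5.6]{DvZa}), and observe that $A\cap\mathcal I(M)$ is a proper ideal of $\mathcal I(M)$ containing the maximal $\mathfrak m$, whence they coincide.

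The substance lies in injectivity and in openness, and this is precisely where the general comparability property is indispensable; I expect it to be the main obstacle. The technical lemma I would isolate is: \emph{for $x\le a\in\mathcal I(M)$ and a maximal ideal $A$ with $x\notin A$, there is an idempotent $e'\le a$ with $A\in M(e')\subseteq M(x)$.} To prove it, pick $n$ with $s_A(n.x)=1$ (possible since $s_A(x)>0$, whence also $s_A(a)=1$), apply general comparability in $[0,a]$ to the pair $(n.x,\lambda_a(n.x))$ to get an idempotent $e\le a$ with $n.x\wedge e\le\lambda_a(n.x)$ and $\lambda_a(n.x)\wedge\lambda_a(e)\le n.x$, and put $e'=\lambda_a(e)$. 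Evaluating the first inequality at $s_A$ forces $s_A(e)=0$, so $s_A(e')=1$ and $A\in M(e')$; evaluating the second at an arbitrary $s_{A'}$, splitting on whether $a\in A'$, yields $s_{A'}(x)=0\Rightarrow s_{A'}(e')=0$, i.e.\ $M(e')\subseteq M(n.x)=M(x)$. This lemma does double duty: it shows that $\{M(e):e\in\mathcal I(M)\}$ is a base for $\mathcal T_M$, and, applied with the maximal ideal $B$ to a separating element $x\in A\setminus B$ of two maximal ideals that agree on $\mathcal I(M)$, it produces an idempotent $e'\in A\setminus B$, contradicting the agreement and so giving injectivity.

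Finally I would assemble the topology. Continuity of $\xi$ is immediate, since $\xi^{-1}(C(J))=C(\langle J\rangle_M)$ for every ideal $J$ of $\mathcal I(M)$. Openness follows from the base lemma: $\xi$ carries the base $\{M(e):e\in\mathcal I(M)\}$ bijectively onto the standard base $\{\{\mathfrak n\in\mathrm{MaxI}(\mathcal I(M)):e\notin\mathfrak n\}:e\in\mathcal I(M)\}$ of $\mathrm{MaxI}(\mathcal I(M))$, because $e\notin A\Leftrightarrow e\notin A\cap\mathcal I(M)$ for $e\in\mathcal I(M)$. A continuous bijection carrying a base onto a base is a homeomorphism, so $\xi$ is one, and the chain of homeomorphisms together with Lemma \ref{le:Bel7}(v) completes the proof.
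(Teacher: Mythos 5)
Your proof is correct, but it takes a genuinely different route from the paper's at the one point where the real work happens. The paper obtains bijectivity of $\xi$ by citing \cite[Thms 4.3, 4.4]{DvZa}, and then proves continuity of the inverse indirectly: it transfers $\xi$ to the restriction map $\eta(s)=s_{|\mathcal I(M)}$ between state-morphism spaces via Theorem \ref{th:Bel9}, embeds $\mathcal{SM}(M)$ into $\mathcal{SM}(N)$ for the representing $MV$-algebra $N$, and shows $\eta$ is closed by a net/subnet compactness argument. You instead prove bijectivity from scratch (your surjectivity and injectivity arguments are sound) and replace the entire compactness machinery by the single lemma that, under general comparability, $\{M(e)\mid e\in\mathcal I(M)\}$ is a base of the hull-kernel topology; your construction $e'=\lambda_a(e)$ from comparability applied to the pair $(n.x,\lambda_a(n.x))$ checks out, and once $\xi$ carries this base bijectively onto the standard base of $\mathrm{MaxI}(\mathcal I(M))$ the homeomorphism is immediate. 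Your version is more elementary, localizes exactly where general comparability enters, and incidentally sidesteps a rough spot in the paper's argument: the assertion there that $\phi(X)$ is closed in $\mathcal{SM}(N)$ for every closed $X\subseteq\mathcal{SM}(M)$ is in tension with Remark \ref{rm:Bel} (the correct compact set is $\phi(X)\cup\{s_\infty\}$, as in Proposition \ref{pr:Bel8}, and the paper's net argument then needs the extra observation that the limit cannot be $s_\infty$ because the restricted net converges to a genuine state-morphism on $\mathcal I(M)$). What the paper's route buys is the explicit description of $\eta$ and its behaviour on arbitrary closed sets, which is reused later (e.g.\ in Lemma \ref{le:LS5}); what yours buys is a self-contained, purely base-theoretic proof of the homeomorphism.
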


\begin{proof}
Let $I$ be any ideal of $\mathcal I(M)$, and let $\hat I$ be the ideal of $M$ generated by $I$. Then (i) $I= \hat I\cap \mathcal I(M)$, (ii) $I\subseteq J$ iff $\hat I\subseteq \hat J$, (iii) if $\hat I$ is a maximal ideal $M$, then so is $I$ in $\mathcal I(M)$ (if $I$ is maximal, then $\hat I$ is not necessarily maximal in $M$), and (iv) if $A$ is a maximal ideal of $M$ such that $A \supseteq \hat I$, then $A\cap \mathcal I(M)=I$ (see \cite[Thm 3.24]{DvZa}).

The mapping $\xi: A \mapsto A \cap \mathcal I(M)$, $A \in \mathrm{MaxI}(M)$, gives an ideal of $\mathcal I(M)$ which is prime because $A$ is prime. Then $\xi(A)$ has to be a maximal ideal of $\mathrm{MaxI}(\mathcal I(M))$. In fact, if $a,b \notin \xi(A)$, $a\le b$, then $b=a\vee \lambda_b(a)$, so that $a\wedge \lambda_b(a)=0$ and $\lambda_b(a)\in A\cap \mathcal I(M)$.
Due to \cite[Thm 4.4]{DvZa}, the mapping $\xi$ is injective, and in view of \cite[thm 4.3]{DvZa}, $\xi$ is invertible, i.e. given maximal ideal $I$ of $\mathcal I(M)$, there is a unique extension of $I$ onto a maximal ideal $A$ of $M$ such that $\xi(A)=I$.

Now let $I$ be an ideal of $\mathcal I(M)$. We assert
$$
\xi^{-1}(C(I))=C(\hat I).
$$
Indeed, if $A$ is a maximal ideal of $\mathcal I(M)$ such that $A\supseteq I$, then $\xi^{-1}(A)\supseteq \hat I$. Conversely, if $A$ is a maximal ideal of $M$ such that $A\supseteq \hat I$, then $\xi(A)\supseteq \hat I\cap \mathcal I(M)=I$. As a result, we have $\xi$ is continuous.

According to Theorem \ref{th:Bel9}, the spaces $\mathcal{SM}(M)$ and $\mathrm{MaxI}(M)$ are homeomorphic; the mapping $\theta: s\mapsto \Ker(s)$, $s \in \mathcal{SM}(M)$, is a homeomorphism. Similarly, $\mathcal{SM}(\mathcal I(M))$ and $\mathrm{MaxI}(\mathcal I(M))$ are homeomorphic under the homeomorphism $\theta_0(s)=\Ker(s)$, $s \in \mathcal{SM}(\mathcal I(M))$. If we define $\eta= \theta_0^{-1}\circ \xi\circ \theta$, then $\eta$ is a bijective  mapping from $\mathcal{SM}(M)$ onto $\mathcal{SM}(\mathcal I(M))$ such that if $s$ is a state-morphism of $M$, then $\eta(s)=s_0:=s_{|\mathcal I(M)}$, the restriction of $s$ onto $\mathcal I(M)$. Conversely, if $s$ is a state-morphism on $\mathcal I(M)$, then $\eta^{-1}(s)=\bar{s}$, the unique extension of $s$ onto $M$. We see that $\eta$ is a continuous mapping.

Now take an $MV$-algebra $N$ such that $M$ can be embedded into $N$ as its maximal ideal, and every element $x$ of $N$ either belongs to $M$ or $\lambda_1(x) \in M$. Given a state-morphism $s$ on $M$, let $\tilde s$ be its extension to $N$ defined by (\ref{eq:state}). According to the proof of Proposition \ref{pr:Bel8}, the mapping $\phi: \mathcal{SM}(M)\to \mathcal{SM}(N)$ given by $\phi(s)=\tilde s$ is injective and continuous, and a net $\{s_\alpha\}_\alpha$ of states of $\mathcal{SM}(M)$ converges weakly to a state-morphism $s\in \mathcal{SM}(M)$ iff $\{\phi(s_\alpha)\}_\alpha$ converges weakly to the state-morphism $\phi(s)$ on $N$.

Take a closed non-void subset $X$ of state-morphisms on $M$, then $\phi(X)$ is a closed subset of $\mathcal{SM}(N)$, consequently, $\phi(X)$ is compact. Let $\{s_\alpha\}_\alpha$ be a net of state-morphisms from $X$ and let its restriction $\{\bar s_\alpha\}_\alpha$ to $\mathcal I(M)$ converges
weakly to a state-morphism $s_0$ on $\mathcal I(M)$. Since the net $\{\tilde s_\alpha\}_\alpha$ is from the compact $\phi(X)$, there is a subnet $\{\bar s_{\alpha_\beta}\}_\beta$ of the net $\{\tilde s_\alpha\}_\alpha$ which converges weakly to a state-morphism $t\in \phi(X)$ on $N$, i.e. $\lim_\beta \tilde s_{\alpha_\beta}(x) = t(x)$ for each $x\in N$. Since $s_\infty \notin \phi(X)$, there is a state-morphism $s\in X$ with $\tilde s=t$. Then $\lim_\beta s_{\alpha_\beta}(x) = s(x)$ for each $x \in M$. In particular, this is true for each $x \in \mathcal I(M)$, so that $\eta(s) = s_0$. In other words, we have proved that $\eta$ is a closed mapping, and whence, $\eta$ is a homeomorphism.

Since $\xi= \theta_0 \circ \eta \circ\theta^{-1}$, we see that $\xi$ is a homeomorphism, and in view of Theorem \ref{th:Bel9}, the spaces $\mathcal{SM}(\mathcal I(M))$, $\mathcal{SM}(M)$, $\mathrm{MaxI}(\mathcal I(M))$, and $\mathrm{MaxI}(\mathcal I(M))$ are mutually homeomorphic topological spaces.

Consequently, according to Theorem \ref{th:Bel9}, any of the topological spaces is compact iff $M$ has a top element.
\end{proof}

\begin{thm}\label{th:Bel11}
Let $M$ be an $EMV$-algebra. Then the topological spaces $\mathcal{SM}(M)$ and $\mathrm{MaxI}(M)$ are locally compact Hausdorff space such that if $a$ is an idempotent, then $S(a)$ and $M(a)$ are compact clopen subsets. If $M$ has a top element, then $\mathcal{SM}(M)$ and $\mathrm{MaxI}(M)$ are compact spaces.
\end{thm}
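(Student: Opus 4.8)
The plan is to reduce everything to the pair of homeomorphic spaces from Theorem \ref{th:Bel9} and to treat the sets $S(a)$ and $M(a)$, for $a$ idempotent, as the basic building blocks. Since $\theta\colon \mathcal{SM}(M)\to \mathrm{MaxI}(M)$ is a homeomorphism carrying $S(a)$ onto $M(a)$ (because $s(a)>0$ iff $a\notin\Ker(s)$), it suffices to prove all assertions for $\mathcal{SM}(M)$ and then transport them through $\theta$. Hausdorffness is already available: $\mathcal{SM}(M)$ is a subspace of the compact Hausdorff cube $[0,1]^M$, and $\mathrm{MaxI}(M)$ is Hausdorff by Proposition \ref{pr:Bel6}.

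First I would record the clopenness of $S(a)$ for an idempotent $a$. Since $a\oplus a=a$ forces $s(a)\oplus s(a)=s(a)$ in $[0,1]$, we have $s(a)\in\{0,1\}$ for every state-morphism $s$. Hence $S(a)=\{s\mid s(a)>1/2\}$ and its complement $\{s\mid s(a)<1/2\}$ are both subbasic open sets of the weak topology, so $S(a)$ is clopen; applying $\theta$ then gives that $M(a)$ is clopen in $\mathrm{MaxI}(M)$.

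The core step is the compactness of $S(a)$. If $M$ has a top element, then $\mathcal{SM}(M)$ is compact by Theorem \ref{th:Bel9}, and $S(a)$, being closed, is compact. If $M$ is proper, I would pass to a representing $MV$-algebra $N$ via the Basic Representation Theorem \ref{th:embed} and use the description $\mathcal{SM}(N)=\{\tilde s\mid s\in\mathcal{SM}(M)\}\cup\{s_\infty\}$ from Proposition \ref{pr:corr1}. Since $a\in M$ is idempotent in $N$ as well, the set $\{t\in\mathcal{SM}(N)\mid t(a)=1\}$ is clopen in the compact space $\mathcal{SM}(N)$ (again because $t(a)\in\{0,1\}$), hence compact. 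As $s_\infty(a)=0$, this set misses $s_\infty$ and therefore coincides with $\phi(S(a))$, where $\phi(s)=\tilde s$. By Remark \ref{rm:Bel} together with the net-convergence equivalence in Proposition \ref{pr:corr1}, $\phi$ is a homeomorphism onto its open image, so $S(a)=\phi^{-1}(\phi(S(a)))$ is compact.

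Finally, local compactness follows because every point lies in one of these compact clopen sets: given a state-morphism $s$, there is $x\in M$ with $s(x)=1$, and choosing an idempotent $a\ge x$ yields $s(a)=1$, i.e. $s\in S(a)$, a compact clopen neighborhood of $s$. Thus $\mathcal{SM}(M)$, and through $\theta$ also $\mathrm{MaxI}(M)$, is locally compact. The remaining assertion, compactness when $M$ has a top element, is immediate from Theorem \ref{th:Bel9}. I expect the only genuinely delicate point to be the identification $\phi(S(a))=\{t\in\mathcal{SM}(N)\mid t(a)=1\}$ together with the fact that $\phi$ restricts to a homeomorphism onto its image, since this is exactly what allows compactness to be pulled back from $N$ to $M$.
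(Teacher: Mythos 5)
Your proof is correct, and while it follows the paper's overall strategy (treat the case with a top element via Theorem \ref{th:Bel9}, and otherwise embed $M$ into the representing $MV$-algebra $N$ and exploit the compactness of $\mathcal{SM}(N)$), the mechanism you use to pull compactness back to $\mathcal{SM}(M)$ is genuinely different. The paper takes an open cover of $S(a)$ by \emph{subbasic} sets $S(x_\alpha)_{u_\alpha,v_\alpha}$, pushes it forward through $\phi$ using $\phi(S(x)_{u,v})=S_N(x)_{u,v}$, extracts a finite subcover from the compact $S_N(a)$, pulls it back, and then has to invoke the Alexander subbase theorem (\cite[Thm 5.6]{Kel}) to upgrade ``every subbasic cover has a finite subcover'' to genuine compactness; it then repeats a parallel cover argument for $M(a)$ via an auxiliary map $\psi$ into $\mathrm{MaxI}(N)$. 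You instead observe that the net-convergence equivalence of Proposition \ref{pr:corr1} makes $\phi$ a homeomorphism onto its image, so that the compact clopen set $\{t\in\mathcal{SM}(N)\mid t(a)=1\}=\phi(S(a))$ transports directly to $S(a)$, and you dispose of $M(a)$ in one line by applying $\theta$. This is cleaner: it avoids the subbase theorem entirely and eliminates the duplicated argument on the maximal-ideal side. The only thing to make explicit, which you correctly flag as the delicate point, is that continuity of $\phi^{-1}$ on the image is exactly the ``$\Leftarrow$'' direction of the convergence equivalence in Proposition \ref{pr:corr1}; with that spelled out, the argument is complete. Your observation that $s(a)\in\{0,1\}$ for idempotent $a$, giving clopenness of $S(a)$ from two subbasic sets, matches the paper's, and the local compactness step (every $s$ lies in some $S(a)$ with $a$ idempotent) is identical.
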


\begin{proof}
Due to Basic Representation Theorem \ref{th:embed}, either $M$ has a top element, and $M$ is an $MV$-algebra, or $M$ can be embedded into $N$ as its maximal ideal, and every $x\in N$ either belongs to $M$ or $\lambda_1(x)$ belongs to $M$. If $M$ has a top element, then $\mathcal{SM}(M)$ and $\mathrm{MaxI}(M)$ are compact and homeomorphic, see Theorem \ref{th:Bel9}.

Let us assume $M$ has no top element. Given $x \in M$ and $y\in N$, let $S(x)=\{s\in\mathcal{SM}(M)\mid s(x)>0\}$ and $S_N(y)= \{s\in\mathcal{SM}(N) \mid s(y)>0\}$, they are open sets.

Define a mapping $\phi: \mathcal{SM}(M) \to \mathcal{SM}(N)$ by $\phi(s)=\tilde s$, $s \in \mathcal{SM}(M)$, where $\tilde s$ is defined by (\ref{eq:state}). Then $\phi$ is an injective mapping such that $\phi(S(x))= S_N(x)$ for each $x \in M$.
Take an idempotent $a \in \mathcal I(M)$. Then $S(a)=\{s\in \mathcal{SM}(M) \mid s(a)>0\}=\{s \in \mathcal{SM}(M) \mid s(a)=1\}$ is both open and closed. The same is true for $S_N(a)=\{s\in \mathcal{SM}(N)\mid s(a)>0\}$, in addition $S_N(a)$ is compact because $\mathcal{SM}(N)$ is compact.

For each $x\in M$ and $u,v$ real numbers with $u<v$, the sets $S(x)_{u,v}=\{s\in \mathcal{SM}(M)\mid u<s(x)<v\}$ and $S_N(x)_{u,v}=\{s\in \mathcal{SM}(N)\mid u<s(x)<v\}$, where $x \in N$, are open and they form a subbase of the weak topologies. Then $\phi(S(x)_{u,v})= S_N(x)_{u,v}$ and $\phi(S(x))=S_N(x)$ whenever $x \in M$.

Now we show that $S(a)$ is a compact set in $\mathcal{SM}(M)$. Take an open cover of $S(a)$ in the form $\{S(x_\alpha)_{u_\alpha,v_\alpha}\mid \alpha \in A\}$, where $x_\alpha \in M$ and $u_\alpha, v_\alpha$ are real numbers such that $u_\alpha< v_\alpha$ for each $\alpha \in A$. Then
\begin{eqnarray*}
S(a) &\subseteq & \bigcup_\alpha S(x_\alpha)_{u_\alpha,v_\alpha}\\
\phi(S(a))&\subseteq&  \bigcup_\alpha \phi(S(x_\alpha)_{u_\alpha,v_\alpha})\\
S_N(a)&\subseteq& \bigcup_\alpha \phi(S(x_\alpha)_{u_\alpha,v_\alpha}).
\end{eqnarray*}
The compactness of $S_N(a)$ entails a finite subset $F$ of $A$ such that $S_N(a)\subseteq \bigcup\{ \phi(S(x_\alpha)_{u_\alpha,v_\alpha})\mid \alpha \in F\}$, whence, $S(a)\subseteq \bigcup\{ S(x_\alpha)_{u_\alpha,v_\alpha}\mid \alpha \in F\}$. Since the system of all open sets $S(x)_{u,v}$ forms a subbase of the weak topology of $\mathcal{SM}(M)$, we have by \cite[Thm 5.6]{Kel}, $S(a)$ is compact and clopen as well. In addition, given a state-morphism $s \in \mathcal{SM}(M)$, there is an element $x\in M$ with $s(x)=1$, and there is an idempotent $a\in M$ such that $x\le a$ which entails $s \in S(x)\subseteq S(a)$. Whence, $\mathcal{SM}(M)$ is locally compact.

\vspace{2mm}
\noindent {\it Claim.  $M(a)$ and $M_N(a)$ are both clopen and compact.}
\vspace{1mm}

Define a mapping $\theta_N:\mathcal{SM}(N) \to \mathrm{MaxI}(N)$ by $\theta_N(s):=\Ker(s)$, $s \in \mathcal{SM}(N)$. Since $N$ has a top element, $\theta_N$ is a homeomorphism, see Theorem \ref{th:Bel9}. Therefore, $M_N(a)$ is clopen and compact.

Whence $M_N(a)$ is compact in $\mathrm{MaxI}(N)$. We show that also $M(a)$ is compact in $\mathrm{MaxI}(M)$. Take an open covering $\{M(x_\alpha)\mid \alpha \in A\}$ of $M(a)$, where each $x_\alpha \in M$. Given $I\in \mathrm{MaxI}(M)$, there is a unique state-morphism $s$ on $M$ such that $I=\Ker(s)=\theta^{-1}(s)$, therefore, we define the mapping $\psi: \mathrm{MaxI}(M)\to \mathrm{MaxI}(N)$ by $\psi(I)=\theta_N^{-1}(\tilde s)$.

Then $\{\psi(M(x_\alpha))\mid \alpha \in A\}$ is an open covering of $\psi(M(a))=M_N(a)$ which is a compact set. Whence, there is a finite subcovering $\{\psi(M(x_{\alpha_i}))\mid i=1,\ldots,n\}$ of $\psi(M(a))$, consequently $\{M(x_{\alpha_i})\mid i=1,\ldots,n\}$ is a finite subcovering of $M(a)$, consequently, $M(a)$ is compact and clopen as well.
\end{proof}

\begin{cor}\label{co:Bel12}
Let $M$ be an $EMV$-algebra with the general comparability property. Then
the spaces $\mathcal{SM}(\mathcal I(M))$, $\mathcal{SM}(M)$, $\mathrm{MaxI}(\mathcal I(M))$, and $\mathrm{MaxI}(\mathcal I(M))$ are totally disconnected, locally compact and completely regular spaces.
\end{cor}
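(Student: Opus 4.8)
The plan is to deduce all three properties from results already established, reducing everything to a single one of the four spaces. By Theorem \ref{th:Bel10}, the hypothesis of general comparability guarantees that $\mathcal{SM}(\mathcal I(M))$, $\mathcal{SM}(M)$, $\mathrm{MaxI}(\mathcal I(M))$, and $\mathrm{MaxI}(M)$ are mutually homeomorphic. Since total disconnectedness, local compactness, and complete regularity are all invariants of homeomorphism, it suffices to verify them for one representative, and I would choose $\mathrm{MaxI}(M)$. Local compactness is then immediate from Theorem \ref{th:Bel11}, and complete regularity follows from Proposition \ref{pr:Bel8}, proved for $\mathcal{SM}(M)$ and transported across the homeomorphism $\theta$ of Theorem \ref{th:Bel9}.

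The remaining and genuinely new point is total disconnectedness, and this is exactly where general comparability enters. First I would take two distinct maximal ideals $A\ne B$ and, as in the Hausdorffness argument of Proposition \ref{pr:Bel6}, produce an idempotent $a$ with $x,y\le a$ together with elements $p=x\odot\lambda_a(y)\in A\setminus B$ and $q=y\odot\lambda_a(x)\in B\setminus A$ satisfying $p\wedge q=0$. Applying general comparability in the $MV$-algebra $[0,a]$ to $p,q$ yields an idempotent $e\in[0,a]$ with $p\wedge e\le q$ and $q\wedge\lambda_a(e)\le p$; combining each inequality with $p\wedge q=0$ forces $p\wedge e=0$ and $q\wedge\lambda_a(e)=0$, i.e. $p\le\lambda_a(e)$ and $q\le e$, since $e$ is Boolean in $[0,a]$.

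From here I would extract the separating clopen set. Because $p\notin B$ and $p\le\lambda_a(e)$, downward closure of $B$ gives $\lambda_a(e)\notin B$, and then primality of $B$ together with $e\wedge\lambda_a(e)=0\in B$ forces $e\in B$; dually, $q\notin A$ and $q\le e$ give $e\notin A$. Hence $A\in M(e)$ while $B\notin M(e)$, and by Theorem \ref{th:Bel11} the set $M(e)$ is clopen precisely because $e$ is idempotent, so $A$ and $B$ are separated by a clopen set. I expect the main obstacle to be exactly this upgrade: Hausdorffness alone separates $A$ and $B$ only by the basic open sets $M(p)$, $M(q)$ attached to arbitrary (non-idempotent) elements, which need not be closed, and general comparability is the tool that replaces these by a single idempotent $e$ whose associated set $M(e)$ is clopen. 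Once the separating clopen set is found, the conclusion transfers to the remaining three spaces via the homeomorphisms of Theorem \ref{th:Bel10}.
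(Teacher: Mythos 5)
Your proposal is correct, and for two of the three properties it coincides with the paper's (very short) proof: the paper likewise invokes Theorem \ref{th:Bel10} to identify the four spaces up to homeomorphism and then transports local compactness from Theorem \ref{th:Bel11}; your attribution of complete regularity to Proposition \ref{pr:Bel8} is in fact more accurate than the paper's, which credits it to Theorem \ref{th:Bel11} even though that theorem's statement does not mention complete regularity (compare the proof of Corollary \ref{co:Baire}, which cites Proposition \ref{pr:Bel8} as you do). Where you genuinely diverge is on total disconnectedness: the paper again just writes ``by Theorem \ref{th:Bel11}'', but that theorem only asserts that $M(a)$ and $S(a)$ are clopen for \emph{idempotent} $a$, and by itself this does not separate two arbitrary maximal ideals by a clopen set --- indeed, without general comparability the conclusion can fail (the maximal ideal space of an $MV$-algebra with trivial Boolean skeleton need not be totally disconnected), so some use of the hypothesis is unavoidable here. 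Your argument supplies exactly the missing step: starting from the Hausdorffness construction of Proposition \ref{pr:Bel6}, which gives $p=x\odot\lambda_a(y)\in A\setminus B$ and $q=y\odot\lambda_a(x)\in B\setminus A$ with $p\wedge q=0$, general comparability in $[0,a]$ produces an idempotent $e$ with $p\wedge e=0$ and $q\wedge\lambda_a(e)=0$, whence (by distributivity and $e\vee\lambda_a(e)=a$) $p\le\lambda_a(e)$ and $q\le e$, and the primality/downward-closure bookkeeping correctly yields $e\in B\setminus A$, so $M(e)$ is the desired separating clopen set. The implicit route the paper presumably has in mind is to transport total disconnectedness from $\mathrm{MaxI}(\mathcal I(M))$, where every element is idempotent and the basic open sets are automatically clopen and point-separating; your argument works directly on $\mathrm{MaxI}(M)$ and has the merit of making visible exactly where general comparability is used. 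Both are valid; yours is the more self-contained and arguably the more honest proof.
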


\begin{proof}
By Theorem \ref{th:Bel10}, all spaces are mutually homeomorphic, and by Theorem \ref{th:Bel11}, they are completely regular, locally compact and totally disconnected.
\end{proof}

We say that a topological space $\Omega$ is {\it Baire} if, for each sequence of open and dense subsets $\{U_n\}$, their intersection $\bigcap_n U_n$ is dense.

\begin{cor}\label{co:Baire}
Let $M$ be an $EMV$-algebra. The spaces $\mathcal{SM}(M)$ and $\mathrm{MaxI}(M)$ are Baire spaces.
\end{cor}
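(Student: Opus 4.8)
The plan is to obtain the Baire property directly from the topological structure already established, with no new $EMV$-theoretic input. By Theorem \ref{th:Bel9} the spaces $\mathcal{SM}(M)$ and $\mathrm{MaxI}(M)$ are homeomorphic, so it suffices to prove the claim for one of them, say $\mathcal{SM}(M)$. By Theorem \ref{th:Bel11}, $\mathcal{SM}(M)$ is a locally compact Hausdorff space, and every locally compact Hausdorff space is a Baire space by the Baire Category Theorem. Transporting along the homeomorphism then yields the statement for $\mathrm{MaxI}(M)$ as well.

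To make the argument self-contained I would spell out the standard nested-compacta method. Let $\{U_n\}$ be a sequence of open dense subsets of $\mathcal{SM}(M)$ and let $V$ be an arbitrary non-empty open set; the goal is to show $V\cap\bigcap_n U_n\ne\emptyset$. Since $\mathcal{SM}(M)$ is locally compact and Hausdorff, it is regular, so for any non-empty open set $W$ and any point $s\in W$ there is an open neighbourhood $W'$ of $s$ whose closure $\overline{W'}$ is compact and contained in $W$. Using this, I build recursively a decreasing chain of non-empty open sets $V_0=V\supseteq V_1\supseteq V_2\supseteq\cdots$ with each $\overline{V_n}$ compact and $\overline{V_{n+1}}\subseteq V_n\cap U_{n+1}$: at step $n$ the density of $U_{n+1}$ guarantees that $V_n\cap U_{n+1}$ is a non-empty open set, and local regularity lets me choose $V_{n+1}$ with compact closure inside it.

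The conclusion then follows from the finite intersection property. The sets $\overline{V_n}$ form a decreasing sequence of non-empty compact sets, hence $\bigcap_n\overline{V_n}\ne\emptyset$, and any point $s$ in this intersection satisfies $s\in V$ together with $s\in\overline{V_n}\subseteq U_n$ for every $n\ge 1$, so $s\in V\cap\bigcap_n U_n$. Thus $\bigcap_n U_n$ is dense and $\mathcal{SM}(M)$ is Baire; by Theorem \ref{th:Bel9} the same holds for $\mathrm{MaxI}(M)$.

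I expect essentially no substantial obstacle here, because the difficult topological input—local compactness together with the Hausdorff property—was already secured in Theorem \ref{th:Bel11}. The only step requiring a little care is the passage from local compactness to the existence of open sets whose closures are compact and lie inside a prescribed open set; this relies on the fact that locally compact Hausdorff spaces are regular (indeed completely regular, as already noted in the paper). Once that is in hand, the Baire property is a routine application of the nested-compacta argument.
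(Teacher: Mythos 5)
Your proof is correct and follows essentially the same route as the paper: both reduce to the Baire category theorem for locally compact regular spaces, using Theorem \ref{th:Bel9} for the homeomorphism and Theorem \ref{th:Bel11} for local compactness. The only (harmless) difference is that the paper sources regularity from complete regularity via Proposition \ref{pr:Bel8} and then cites Kelley's Baire theorem, whereas you derive regularity from the locally compact Hausdorff structure and spell out the standard nested-compacta argument yourself.
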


\begin{proof}
Both spaces are homeomorphic, see Theorem \ref{th:Bel9}, due to Theorem \ref{th:Bel11}, both spaces are locally compact, and by Proposition \ref{pr:Bel8}, they are completely regular. Therefore, they are also regular. Applying the Baire Theorem, \cite[Thm 6.34]{Kel}, the spaces are Baire spaces.
\end{proof}

Motivated by Example \ref{ex:LS10}, we have the following result which describes the state-morphisms spaces of $M$ and $N$ from the topological point of view.

\begin{thm}\label{th:comp}
Let $M$ be an $EMV$-algebra without top element which is a maximal ideal of the $MV$-algebra $N=\{x\in N\mid \text{ either } x\in M \text{ or } \lambda_1(x)\in M\}$. Then $\mathcal{SM}(N)$ and $\mathrm{MaxI}(N)$ are the one-point compactifications of the spaces $\mathcal{SM}(M)$ and $\mathrm{MaxI}(M)$, respectively.
\end{thm}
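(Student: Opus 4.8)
The plan is to realize $\mathcal{SM}(N)$ as the Alexandroff one-point compactification of $\mathcal{SM}(M)$ by checking the hypotheses of the standard uniqueness theorem for one-point compactifications, see \cite{Kel}: if $Y$ is a compact Hausdorff space, $y_\infty \in Y$ is non-isolated, and $Y\setminus\{y_\infty\}$ is a locally compact Hausdorff (hence dense) subspace, then $Y$ is homeomorphic, over $Y\setminus\{y_\infty\}$, to the one-point compactification of $Y\setminus\{y_\infty\}$. The candidate for the added point is the state-morphism $s_\infty$ from Proposition \ref{pr:corr1}, and the candidate for the dense subspace is the image of the embedding $\phi\colon \mathcal{SM}(M)\to\mathcal{SM}(N)$, $\phi(s)=\tilde s$.

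First I would collect the ingredients already at hand. By Proposition \ref{pr:corr1} we have the disjoint decomposition $\mathcal{SM}(N)=\phi(\mathcal{SM}(M))\cup\{s_\infty\}$ (disjoint union), so that $\phi(\mathcal{SM}(M))=\mathcal{SM}(N)\setminus\{s_\infty\}$. Since $N$ has a top element, Theorem \ref{th:Bel9} gives that $\mathcal{SM}(N)$ is compact and Hausdorff; since $M$ has no top element, the same theorem gives that $\mathcal{SM}(M)$ is \emph{not} compact, while Theorem \ref{th:Bel11} gives that $\mathcal{SM}(M)$ is locally compact Hausdorff. Next I would check that $\phi$ is a topological embedding: it is injective and continuous by Remark \ref{rm:Bel}, and the convergence correspondence of Proposition \ref{pr:corr1}---a net $\{s_\alpha\}$ converges weakly to $s$ in $\mathcal{SM}(M)$ if and only if $\{\tilde s_\alpha\}$ converges weakly to $\tilde s$ in $\mathcal{SM}(N)$---shows that $\phi^{-1}$ is continuous on $\phi(\mathcal{SM}(M))$, because for $x\in M$ one has $\tilde s_\alpha(x)=s_\alpha(x)$. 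Hence $\mathcal{SM}(M)$ is homeomorphic to the subspace $\mathcal{SM}(N)\setminus\{s_\infty\}$, which by Remark \ref{rm:Bel} is moreover open.

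It then remains to see that $s_\infty$ is not isolated, equivalently that $\phi(\mathcal{SM}(M))$ is dense in $\mathcal{SM}(N)$. Here I would argue by contradiction: if $s_\infty$ were isolated, then $\phi(\mathcal{SM}(M))=\mathcal{SM}(N)\setminus\{s_\infty\}$ would be closed in the compact space $\mathcal{SM}(N)$, hence compact, forcing $\mathcal{SM}(M)$ to be compact and contradicting that $M$ has no top element. With all hypotheses verified, the uniqueness theorem yields that $\mathcal{SM}(N)$ is the one-point compactification of $\mathcal{SM}(M)$, the point at infinity being $s_\infty$.

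Finally I would transport the result to maximal ideals. By Theorem \ref{th:Bel9} the maps $\theta\colon \mathcal{SM}(M)\to\mathrm{MaxI}(M)$ and $\theta_N\colon \mathcal{SM}(N)\to\mathrm{MaxI}(N)$, $s\mapsto\Ker(s)$, are homeomorphisms, and a homeomorphism carries a one-point compactification onto a one-point compactification. Under $\theta_N$ the point $s_\infty$ corresponds to $\Ker(s_\infty)=M$, the distinguished maximal ideal of $N$, while $\phi(\mathcal{SM}(M))$ corresponds to $\mathrm{MaxI}(N)\setminus\{M\}$; composing with $\theta$ exhibits $\mathrm{MaxI}(M)$ as a dense open subspace $\mathrm{MaxI}(N)\setminus\{M\}$ of the compact Hausdorff space $\mathrm{MaxI}(N)$ with a single complementary point, so $\mathrm{MaxI}(N)$ is the one-point compactification of $\mathrm{MaxI}(M)$ with point at infinity $M$. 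The main obstacle is not any single computation but marshalling the embedding, non-compactness, and density facts so that the abstract uniqueness theorem applies; the only genuinely new verification beyond the cited results is the non-isolation of $s_\infty$, which I dispatch by the compactness argument above.
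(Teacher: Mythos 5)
Your proof is correct, and it reaches the conclusion by a genuinely different route from the paper's. Both arguments rest on the same scaffolding: the disjoint decomposition $\mathcal{SM}(N)=\phi(\mathcal{SM}(M))\cup\{s_\infty\}$ and the net-convergence correspondence from Proposition \ref{pr:corr1}, compactness of $\mathcal{SM}(N)$ and local compactness of $\mathcal{SM}(M)$ from Theorems \ref{th:Bel9} and \ref{th:Bel11}, openness of $\phi(\mathcal{SM}(M))$ from Remark \ref{rm:Bel}, and the final transport to maximal ideals through the homeomorphisms $\theta$ and $\theta_N$ with $\Ker(s_\infty)=M$. Where you diverge is the central topological identification. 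The paper verifies the Alexandroff topology by hand in five steps: it runs through the subbasic open sets $S_N(x)_{u,v}$ containing $s_\infty$ (treating $x\in M$ and $x=\lambda_1(x_0)$ separately), then finite intersections, then arbitrary unions, checking in each case that the complement inside $\phi(\mathcal{SM}(M))$ is compact. You replace all of this with the abstract uniqueness statement: a compact Hausdorff space minus one point, with its (automatically open and locally compact) subspace topology, canonically realizes the ambient space as its one-point compactification. That lemma is elementary --- the open neighborhoods of $y_\infty$ in a compact Hausdorff $Y$ are precisely the complements of the compact subsets of $Y\setminus\{y_\infty\}$, because closed subsets of $Y$ are compact and compact subsets of a Hausdorff space are closed --- but since Kelley's Theorem 4.21 is an existence statement rather than this uniqueness statement, you should write out that two-line verification instead of only citing it. Your route buys brevity and eliminates the case analysis on the subbase; the paper's explicit computation has the side benefit of exhibiting the compact complements concretely (as closed subsets of the clopen compact sets $S(a)$, $a\in\mathcal I(M)$), which is the kind of information reused later, e.g. in Theorem \ref{th:bdisc}. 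Your non-isolation argument for $s_\infty$ is correct and establishes density, though strictly speaking the homeomorphism $\mathcal{SM}(N)\cong\mathcal{SM}(M)^{+}$ holds even without it; it is needed only because your quoted form of the uniqueness theorem lists it as a hypothesis.
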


\begin{proof}
In what follows, we use the result and notation from Proposition \ref{pr:corr1}. By Theorem \ref{th:Bel9}, $\mathcal{SM}(N)$ is a compact Hausdorff topological space, whereas $\mathcal{SM}(M)$ is, according to Theorem \ref{th:Bel11}, a locally compact Hausdorff topological space. Due to the Alexander theorem, see \cite[Thm 4.21]{Kel}, there is the one-point compactification of $\mathcal{SM}(M)$. We are going to show that the one-point compactification of $\mathcal{SM}(M)$ is $\mathcal{SM}(N)$.

We proceed in five steps.

(1) If $O_N$ is an open set of $\mathcal{SM}(N)$ such that $s_\infty\notin O_N$, then $O_N=\phi(O)$ for some open subset $O$ of $\mathcal{SM}(M)$. 

(2) Now take an open set $O_N$ containing $s_\infty$ and $O_N=S_N(x)_{u,v}$,  where $x \in  M$ and $u,v$ are real numbers with $ u< v$. Since $s_\infty(x)=0$, $u<0<v$ and we have $S_N(x)_{u,v}=\{s_\infty\}\cup\{\tilde s\mid s\in \mathcal{SM}(M), s(x)<v\}=\{s_\infty\}\cup \phi(\{s\in \mathcal{SM}(N) \mid s(x)<v\})$. If $X:=\phi(\mathcal{SM}(M))\setminus (\{s_\infty\}\cup \phi(\{s\in \mathcal{SM}(N) \mid s(x)<v\})$, then $X=\{s\in \mathcal{SM}(M) \mid s(x)\ge v\}\subseteq \{s\in \mathcal{SM}(N) \mid s(a)\ge v\}$, where $a\in \mathcal I(M)$ such that $x\le a$. If $u \ge 1$, then $X=\emptyset$ which is a compact set and if $u<1$, then $X\subseteq \{s\in \mathcal{SM}(M) \mid s(a)=1\}$. Since the latter set is compact, see Theorem \ref{th:Bel11}, we see that $X$ is closed, and consequently, $X$ is compact, too.

(3) Now let $s_\infty\in O_N=S_N(x)_{u,v}$, where $x \in  M$ and $u,v$ are real numbers with $ u< v$ and $x=\lambda_1(x_0)$, where $x_0\in M$. Since $s_\infty(x)=1$, we have $v>1$. Then $S_N(x)_{u,v}=\{s_\infty\} \cup \{\tilde s \mid s \in \mathcal{SM}(M), u <\tilde s(x)\}=\{s_\infty\}\cup \phi(\{ s\in \mathcal{SM}(M) \mid s(x_0)<1-u\})$. Therefore, $\phi(\mathcal{SM}(M))\setminus(\{s_\infty\}\cup \phi(\{s\in \mathcal{SM}(M)\mid s(x_0)<1-u\})=\phi(\mathcal{SM}(M) \setminus \{s\in \mathcal{SM}(M)\mid s(x_0)<1-u\})=\phi(\{s\in \mathcal{SM}(M)\mid s(x_0)\ge 1-u\})$ and $X=\{s\in \mathcal{SM}(M)\mid s(x_0)\ge 1-u\}=\emptyset$, which is a compact set, if $u<0$, and $X\subseteq \{s\in \mathcal{SM}(M)\mid s(a)\ge 1-u\}= \{s\in \mathcal{SM}(M)\mid s(a)=1\}$ if $u\ge 0$ and $a$ is an idempotent of $M$ with $x_0\le a$. Therefore, $X$ is a closed subset which is a subset of a compact set, see Theorem \ref{th:Bel11}, and we have $X$ is a compact set.

(4) Let $s_\infty\in O_N = \bigcap_{i=1}^n S_N(x_i)_{u_i,v_i}$, where $u_i \in N$, $u_i<v_i$  and $s_\infty \in S_N(x_i)_{u_i,v_i}$ for each $i=1,\ldots,n$. Then $S_N(x_i)_{u_i,v_i}=\{s_\infty\}\cup \phi(S(x'_i)_{u'_i,v'_i})$ where if $x_i \in M$, then $x'_i=x_i$ and $u'_i=u_i$, $v'_i=v_i$ and if $x_i\in N\setminus M$, then $x'_i=\lambda_1(x_i)$ and $u'_i=1-v_i$, $v'_i=1-u_i$.

Hence, $\phi(\mathcal{SM}(M))\setminus \bigcap_{i=1}^n S_N(x_i)_{u_i,v_i}=\phi(\mathcal{SM}(M)\setminus(\{s_\infty\} \cup \phi(\bigcap_{i=1}^n S(x'_i)_{u'_i,v'_i})))=\phi(\bigcup_{i=1}^n (\mathcal{SM}(M) \setminus S(x'_i)_{u'_i,v'_i}))$, so that $\bigcup_{i=1}^n (\mathcal{SM}(M) \setminus S(x'_i)_{u'_i,v'_i})$ is a compact set in view of (3).

(5) $O_N=\bigcup_\alpha O^N_\alpha$, where each $O^N_\alpha$ is the set of the form (4). Then $O^N_\alpha=\{s_\infty\}\cup \phi(O_\alpha)$ if $s_\infty \in O^N_\alpha$, otherwise $O^N_\alpha = O_\alpha$, where $O_\alpha $ is an open set in $\mathcal{SM}(M)$.

Then $\phi(\mathcal{SM}(M) \setminus \bigcup_\alpha O^N_\alpha)= \phi(\mathcal{SM}(M)\setminus\bigcup_\alpha O_\alpha)$, where $O_\alpha$ is a subset of $\mathcal{SM}(M)$ such that $O^N_\alpha =\phi(O_\alpha)$. Whence, $\mathcal{SM}(M) \setminus \bigcup_\alpha O_\alpha = \bigcap_\alpha(\mathcal{SM}(M) \setminus O_\alpha) \subseteq \mathcal{SM}(M) \setminus O_{\alpha_0}$, where $\alpha_0$ is an index $\alpha$ such that $s_\infty \in O^N_{\alpha_0}$, which is by (4) a compact set, consequently, $\bigcap_\alpha(\mathcal{SM}(M) \setminus O_\alpha)$ is a compact set.

Therefore, $\mathcal{SM}(N)$ is the one-point compactification of $\mathcal{SM}(M)$.

Since the spaces $\mathcal{SM}(M)$ and $\mathrm{MaxI}(M)$ are homeomorphic, see Theorem \ref{th:Bel9}, the same is true for $\mathcal{SM}(N)$ and $\mathrm{MaxI}(N)$. If we define $I_\infty =M$, $I_\infty$ is a maximal ideal of $N$, and $I_\infty=\Ker(s_\infty)$. In addition, if $s \in \mathcal{SM}(M)$, then $\Ker(\tilde s)\cap M=\Ker(s)$.
Therefore, we get that the one-point compactification of $\mathrm{MaxI}(M)$ is $\mathrm{MaxI}(N)=\{\Ker(\tilde s)\mid s \in \mathcal{SM}(M)\}\cup\{I_\infty\}$.
\end{proof}

In a dual way as we did for the set of maximal ideals, we define the hull-kernel topology on the set $\mathrm{MaxF}(M)$ of maximal filters on an $EMV$-algebras $M$.  Thus given a filter $F$ from the set $\mathrm{Fil}(M)$ of all filters on $M$, we define
$$O_1(F):=\{B \in \mathrm{MaxF}(M)\mid F \varsubsetneq B\}.$$
Then (i) $F_1 \subseteq F_2$ implies $O_1(F_1)\subseteq O_1(F_2)$, (ii)
$\bigvee_\alpha O_1(F_\alpha)=O_1(\bigvee_\alpha F_\alpha)$, (iii) $\bigcup\{O_1(F)\mid F \in \mathrm{Fil}(M)\}=O_1(M)=\mathrm{Fil}(M)$, (iv) $\bigcap_{i=1}^n O_1(F_i)=O_1(\bigcap_{i=1}^n F_i)$. Hence, the system $\{O_1(F)\mid F \in \mathrm{Fil}(M)\}$ defines the so-called hull-kernel topology on the set $\mathrm{MaxF}(M)$. Every closed set is of the form $C_1(F)=\{B \in \mathrm{MaxF}(M)\mid F \subseteq B\}$. If given $x \in M$, we set $M_1(x)=\{B \in \mathrm{MaxF}(M) \mid x\notin B\}$, then the system $\{M_1(x)\mid x \in M\}$ is a base for the hull-kern topology of maximal filters.

The following result is dual to the one from Proposition \ref{pr:Bel8}.

\begin{prop}\label{pr:corr2}
Let $X$ be a non-empty set of state-morphisms closed in the weak topology of state-morphisms of an $EMV$-algebra $M$. Let $t$ be a state-morphism such that $t\notin X$. There exists an element $a\in M$ such that $t(a)=0$ and $s(a)=1$.
\end{prop}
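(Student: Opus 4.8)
If $M$ already has a top element $1$, the statement follows immediately from its companion: by Proposition~\ref{pr:Bel8} there is $c\in M$ with $t(c)=1$ and $s(c)=0$ for all $s\in X$, and then $a:=\lambda_1(c)$ satisfies $t(a)=1-t(c)=0$ and $s(a)=1-s(c)=1$ for every $s\in X$. The real difficulty lies in the proper case, where this complementation escapes $M$ (indeed $\lambda_1(c)\in N\setminus M$), so a separator must be manufactured inside $M$. My plan is therefore to dualize the covering argument of Proposition~\ref{pr:Bel8}, trading the maximal ideals $\Ker(s)$ and the ``downward'' operation $\odot$ for the maximal filters $\Ker_1(s)$ and the ``upward'' operation $\oplus$.

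First I would build, for each single $s\in X$, a local separator $a_s\in M$ with $t(a_s)=0$ and $s(a_s)=1$. Since $s\ne t$, the maximal ideals $\Ker(s)$ and $\Ker(t)$ are incomparable (Lemma~\ref{le:Bel5}), so there is $x\in\Ker(t)\setminus\Ker(s)$, that is $t(x)=0$ while $s(x)>0$. Fixing an idempotent $b$ with $x\le b$ and an integer $k$ with $\min\{k\,s(x),1\}=1$, the element $a_s:=k.x$ then satisfies $t(a_s)=k.t(x)=0$ and $s(a_s)=k.s(x)=1$.

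Next I would set $V(a_s):=\{r\in\mathcal{SM}(M)\mid r(a_s)>1/2\}$; these are open sets, each containing its own $s$, so $\{V(a_s)\}_{s\in X}$ covers $X$. To extract a finite subcover I would pass to the representing $MV$-algebra $N$ of Theorem~\ref{th:embed}: by Proposition~\ref{pr:corr1} and the compactness argument of Proposition~\ref{pr:Bel8}, the set $\phi(X)\cup\{s_\infty\}$ is compact in $\mathcal{SM}(N)$, and the extended sets $\widetilde V(a_s):=\{r\in\mathcal{SM}(N)\mid r(a_s)>1/2\}$ cover $\phi(X)$. Pulling a finite subcover back to $M$ yields $a_1,\dots,a_n$ with $X\subseteq V(a_1)\cup\cdots\cup V(a_n)$. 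Putting $a':=a_1\vee\cdots\vee a_n$, preservation of $\vee$ gives $t(a')=\max_i t(a_i)=0$, while every $s\in X$ lies in some $V(a_i)$ and hence $s(a')\ge s(a_i)>1/2$. To turn the strict inequality into equality without moving $t(a')$ off $0$, I would finally take $a:=2.a'=a'\oplus a'$: since $t(a')=0$ exactly, $t(a)=\min\{2\,t(a'),1\}=0$, whereas $s(a')>1/2$ forces $s(a)=\min\{2\,s(a'),1\}=1$ for all $s\in X$. This $a\in M$ is the required element.

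The step I expect to be the main obstacle is the compactness used to replace the infinite family $\{a_s\}$ by a finite one. In contrast with Proposition~\ref{pr:Bel8}, the covering sets $\widetilde V(a_s)$ do \emph{not} contain $s_\infty$, because $a_s\in M$ forces $s_\infty(a_s)=0<1/2$; consequently the compact set $\phi(X)\cup\{s_\infty\}$ is not covered by the $\widetilde V(a_s)$ alone, and the finite-subcover extraction must be arranged so that $s_\infty$ is first separated from $\phi(X)$. This is precisely where the absence of a top element makes the dual statement more delicate than Proposition~\ref{pr:Bel8}, and it is the point at which one must take care that the separator produced genuinely lies in $M$ and not merely in $N$.
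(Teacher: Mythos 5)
Your local separators $a_s$, the passage to $a'=a_1\vee\cdots\vee a_n$ and the final doubling $a=a'\oplus a'$ are all correct, and you have put your finger on exactly the right spot: the finite-subcover step is not merely delicate, it fails. Since every $a_s$ lies in $M$, we have $s_\infty(a_s)=0$, so $s_\infty$ belongs to none of the sets $\widetilde V(a_s)$; and $s_\infty$ lies in the closure of $\phi(X)$ whenever $X$ is not contained in any compact set $S(b)$, $b\in\mathcal I(M)$. Hence any open set one adjoins to catch $s_\infty$ necessarily meets $\phi(X)$ in a piece that need not be covered by the finitely many retained $\widetilde V(a_{s_i})$. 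The paper's own proof stumbles at precisely this point: it adds one extra set $\widetilde W(b)=\{s\mid s(b)<1/2\}$ containing $s_\infty$, extracts a finite subcover of the compact set $\phi(X)\cup\{s_\infty\}$, and then silently discards $\widetilde W(b)$ when returning to $X$, on the strength of the claim that $\widetilde W(b)$ is disjoint from every $\widetilde W(a)$ --- which is false for an arbitrary $b\in M$. So your proof is the paper's proof, and the step you could not close is the step the paper does not close either.

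In fact the gap is unrepairable if ``closed'' means relatively closed in $\mathcal{SM}(M)$, which is the reading that the application in Theorem \ref{th:Fil} requires: the statement is then false. Take $M=\mathcal T$, the generalized Boolean algebra of finite subsets of $\mathbb N$, so that $\mathcal{SM}(M)=\{s_n\mid n\in\mathbb N\}$ with $s_n(A)=\chi_A(n)$; here each singleton $\{s_m\}=\{s\mid s(\{m\})>1/2\}$ is open, the weak topology is discrete, and $X=\{s_n\mid n\ge 2\}$ is closed with $t=s_1\notin X$. An element $a\in M$ with $s(a)=1$ for all $s\in X$ would be a finite set containing every $n\ge 2$. (Proposition \ref{pr:Bel8} survives in this example with $a=\{1\}$; the asymmetry is exactly that there $s_\infty$ lies in every covering set $\widetilde W(a)$, while here it lies in none.) Both your argument and the paper's become correct, and the whole excursion through $N$ and $s_\infty$ becomes unnecessary, under the stronger hypothesis that $X$ is compact --- equivalently, closed in $[0,1]^M$, equivalently contained in some $S(b)$ with $b\in\mathcal I(M)$ (Theorem \ref{th:Bel11}) --- since then the cover $\{V(a_s)\}_{s\in X}$ of $X$ admits a finite subcover directly inside $\mathcal{SM}(M)$.
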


\begin{proof}
Since the proof of the statement is dually similar to the one of Proposition \ref{pr:Bel8}, we outline only the main steps.

Let $t$ be a state-morphism such that $t \notin X$.
We assert that there exists an $a \in M$ such
that $t(a) < 1/2$ while $s(a) > 1/2$ for all $s \in X.$

Indeed, set $A= \{a\in M:\ t(a) <1/2\}$, and for all $a \in A$,
let
$$W(a) :=\{s\in \mathcal{SM}(M))\mid\ s(a)> 1/2\},
$$
which is an open subset of $\mathcal{SM}(M)$. We note that $A\ne \emptyset$ and $A$ is upward directed and closed under $\odot$.

We assert that these open subsets cover $X$. Consider any $s \in
X$. Since $\Ker(s)$ and $\Ker(t)$ are non-comparable subsets of
$M$, there exists $x \in \Ker(t)\setminus \Ker(s).$ Hence $t(x) =
0$ and $s(x)>0$. There exists an integer $n \ge 1$ such that
$s(n.x) > 1/2$. Then $t(n.x)=0$. If we put $a=n.x$, then $s \in W(a)$.
Therefore, $\{W(a)\mid a \in A\}$ is an open covering of $X$.

Similarly as in the proof of Proposition \ref{pr:Bel8}, we pass to $\mathcal{SM}(N)$, where $N$ is an $MV$-algebra such that $M$ is an $EMV$-subalgebra of $N$ and we take the compact space $\phi(X)\cup\{s_\infty\}$.  For each $a\in A$, we define $\widetilde W(a)=\{s\in \mathcal{SM}(N)\mid s(a)>1/2\}$. Then each $\widetilde W(a)$ is open subset of $\mathcal{SM}(N)$ not containing $s_\infty$. Therefore, let $b\in M$ be arbitrary element and we set $\widetilde W(b)=\{s\in \mathcal{SM}(N) \mid s(b)<1/2\}$. Then $\widetilde W(b)$ is an open set containing the state-morphism $s_\infty$, and $\widetilde{\mathcal W(b)}$ is disjoint with $\widetilde W(a)$ for each $a \in A$. Since $\{\widetilde W(a)\mid a \in A\}\cup \{\widetilde W(b)\}$ is an open covering of $\phi(X)\cup\{s_\infty\}$, so that there are $a_1,\ldots,a_n\in A$ such that $\phi(X)\cup\{s_\infty\} \subseteq \bigcup_{i=1}^n \widetilde W(a_i) \cup \widetilde W(b)$. Therefore $X \subseteq W(a_1) \cup \cdots \cup W(a_n)$ for
some $a_1, \ldots, a_n \in A$. Put $a_0 = a_1 \vee \cdots \vee
a_n$. Then $a_0 \in A$ and for each $s \in X$, we have $s(a_0) \ge s(a_i) > 1/2$ for $i =1,\ldots, n, $ which proves $X \subseteq W(a_0)$, i.e., $s(a_0) >  1/2$ for all $s \in X$. If we put $a = a_0\oplus a_0$, then $t(a)=0$ and $s(a)=1$ for each $s \in X$.
\end{proof}

\begin{thm}\label{th:Fil}
Let $M$ be an $EMV$-algebra. Then the spaces $\mathcal{SM}(M)$, $\mathrm{MaxI}(M)$ and $\mathrm{MaxF}(M)$ are mutually homeomorphic spaces.
\end{thm}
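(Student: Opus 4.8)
The plan is to reduce everything to the weak topology of state-morphisms and to mirror, in the dual setting of filters, the argument used for Theorem \ref{th:Bel9}. By Theorem \ref{th:Bel9} the map $\theta\colon \mathcal{SM}(M)\to \mathrm{MaxI}(M)$, $s\mapsto \Ker(s)$, is already a homeomorphism, so it suffices to prove that $\mathcal{SM}(M)$ and $\mathrm{MaxF}(M)$ are homeomorphic. I would define $\theta_1\colon \mathcal{SM}(M)\to \mathrm{MaxF}(M)$ by $\theta_1(s)=\Ker_1(s)$; by Lemma \ref{le:Bel5} this is a bijection, since every $\Ker_1(s)$ is a maximal filter and every maximal filter equals $\Ker_1(s)$ for a unique state-morphism $s$. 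The goal is then to show that $\theta_1$ is a continuous closed bijection, which forces it to be a homeomorphism.

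For continuity, recall that the closed subsets of $\mathrm{MaxF}(M)$ are exactly the sets $C_1(F)=\{B\in \mathrm{MaxF}(M)\mid F\subseteq B\}$. I would compute $\theta_1^{-1}(C_1(F))=\{s\in \mathcal{SM}(M)\mid \Ker_1(s)\supseteq F\}=\bigcap_{x\in F}\{s\mid s(x)=1\}$. Because $0\le s(x)\le 1$ for every state-morphism $s$, the set $\{s\mid s(x)=1\}$ is precisely the complement of the subbasic open set $S(x)_{-1,1}=\{s\mid -1<s(x)<1\}$, hence closed; so $\theta_1^{-1}(C_1(F))$ is an intersection of closed sets and $\theta_1$ is continuous.

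The heart of the proof is showing that $\theta_1$ is a closed map. Given a non-empty closed subset $X\subseteq \mathcal{SM}(M)$, I would set $\Ker_1(X):=\bigcap\{\Ker_1(s)\mid s\in X\}$, which is a filter of $M$ as an intersection of filters, and claim $\theta_1(X)=C_1(\Ker_1(X))$. The inclusion $\theta_1(X)\subseteq C_1(\Ker_1(X))$ is immediate from the definition. For the reverse inclusion, take $F\in C_1(\Ker_1(X))$ and write $F=\Ker_1(t)$ for the unique state-morphism $t$ supplied by Lemma \ref{le:Bel5}. If $t\notin X$, then Proposition \ref{pr:corr2} yields an element $a\in M$ with $t(a)=0$ and $s(a)=1$ for all $s\in X$; thus $a\in \Ker_1(X)\subseteq F=\Ker_1(t)$, forcing $t(a)=1$, a contradiction. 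Hence $t\in X$ and $F=\theta_1(t)\in \theta_1(X)$, so $\theta_1(X)=C_1(\Ker_1(X))$ is closed. A continuous closed bijection is a homeomorphism, so $\theta_1$ is a homeomorphism, and composing with $\theta$ shows that $\mathcal{SM}(M)$, $\mathrm{MaxI}(M)$ and $\mathrm{MaxF}(M)$ are mutually homeomorphic.

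The step I expect to be the main obstacle is the closedness of $\theta_1$, and it rests entirely on the separation statement of Proposition \ref{pr:corr2}: this dual separation result plays for maximal filters exactly the role that Proposition \ref{pr:Bel8} played for maximal ideals in the proof of Theorem \ref{th:Bel9}. Everything else is a routine transcription of the topological bookkeeping, with the only subtlety being the observation that $\{s\mid s(x)=1\}$ is closed, which follows cleanly from $s(x)\le 1$.
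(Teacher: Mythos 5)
Your proposal is correct and follows essentially the same route as the paper: the same bijection $s\mapsto \Ker_1(s)$ from Lemma \ref{le:Bel5}, the same continuity computation via the closed sets $C_1(F)$, and the same use of the dual separation result (Proposition \ref{pr:corr2}) to establish $\theta_1(X)=C_1(\Ker_1(X))$ for closed $X$ and hence closedness of the map. The only difference is cosmetic: you spell out why $\{s\mid s(x)=1\}$ is closed via the subbase, where the paper simply asserts it.
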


\begin{proof}
According to Theorem \ref{th:Bel9}, the spaces $\mathcal{SM}(M)$ and $\mathrm{MaxI}(M)$ are homeomorphic and the mapping $\theta:\mathcal{SM}(M) \to \mathrm{MaxI}(M)$, defined by $\theta(s)=\Ker(s)$, is a homeomorphism. According to Lemma \ref{le:Bel5}, the mapping $\zeta: \mathcal{SM}(M)\to \mathrm{MaxF}(M)$ given by $\zeta(s)=\Ker_1(s)$, $s \in \mathcal{SM}(M)$, is bijective.

Let $C_1(F)$ be any closed subspace of $\mathrm{MaxF}(M)$. Then
$$\theta^{-1}(C_1(F))=\{s \in \mathcal{SM}(M) \mid s(x)=1 \text{ for all } x \in F\}
$$
is a closed subspace of $\mathcal{SM}(M)$, so that $\zeta$ is continuous.

Given a non-empty subset $X$ of $\mathcal{SM}(M)$, we define
$$\Ker_1(X):=\{x \in M \mid s(x)=1 \text{ for all } s \in X\}.
$$
Then $\Ker_1(X)$ is a filter of $M$. If, in addition, $X$ is a closed subset of $\mathcal{SM}(M)$, we assert
$$\zeta(X)=C_1(\Ker_1(X)).$$

The inclusion $\zeta(X)\subseteq C_1(\Ker_1(X))$ is evident. By Proposition \ref{pr:corr2}, if $t \notin X$, there is an element $a\in M$ such that $s(a)=1$ for each $s\in X$ and $t(a)=0$. Consequently, $t \notin X$ implies $\zeta(t) \notin C(\Ker_1(X))$, and $C(\Ker_1(X)) \subseteq \zeta(X)$.
As a result, we conclude $\zeta$ is a homeomorphism.
\end{proof}

\begin{lem}\label{le:Bel13}
Let $M$ be an $EMV$-algebra, $x\in M$, and $b \in \mathcal I(M)$ with $x\le b$.

{\rm (i)} Then
$$M(b)\setminus M(x)\subseteq M(\lambda_b(x)).
$$

{\rm (ii)} If $x\in \mathcal I(M)$, then
$$M(b)\setminus M(x)= M(\lambda_b(x)).
$$

{\rm (iii)} If $x,y \in M$, $x,y \le b\in \mathcal I(M)$, then
$$ M(y)\setminus M(x\wedge y)= M(y)\setminus M(x) \subseteq M(y\odot \lambda_b(x))\subseteq M(\lambda_b(x)).
$$

{\rm (iv)} Let $M$ be semisimple, $x \in M$, and $x \le b \in \mathcal I(M)$. Then $x\in M$ is an idempotent if and only if $M(b)\setminus M(x)= M(\lambda_b(x))$.

{\rm (v)} Let $M$ be semisimple, $x,y \in \mathcal I(M)$, and $x,y \le b \in \mathcal I(M)$. Then
$$ M(y)\setminus M(x\wedge y)= M(y)\setminus M(x) = M(y\odot \lambda_b(x)).
$$

{\rm (vi)} If $M$ is an arbitrary $EMV$-algebra having a top element $1$, then for each idempotent $a \in \mathcal I(M)$, we have $M(\lambda_1(a))=M(1)\setminus M(a)= M(a)^c$, where $M(a)^c$ is the set complement of $M(a)$ in $\mathrm{MaxI}(M)$.
\end{lem}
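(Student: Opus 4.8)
The plan is to reduce every part to the basic identities for the sets $M(\cdot)$ recorded in Proposition \ref{pr:Bel6} (monotonicity, $M(x\wedge y)=M(x)\cap M(y)$, $M(0)=\emptyset$), together with the decomposition (\ref{eq:x<y2}) and the fact from Lemma \ref{le:Bel7}(ii) that $M(c)=M(0)$ precisely when $c\in\Rad(M)$. The recurring device is that for a maximal ideal $I$ with $b\notin I$, the trace $[0,b]\cap I$ is, by \cite[Prop 3.23]{DvZa}, a proper (hence maximal) ideal of the $MV$-algebra $[0,b]$, so I may argue inside a single $MV$-algebra and transport the conclusion back to $M$.

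For (i), I take $I\in M(b)\setminus M(x)$, so $b\notin I$ and $x\in I$; then $J:=[0,b]\cap I$ is a proper ideal of $[0,b]$ containing $x$, and since $x\oplus\lambda_b(x)=b$ is the unit of $[0,b]$, we cannot have $\lambda_b(x)\in J$ (else $b\in J$), whence $\lambda_b(x)\notin I$, i.e. $I\in M(\lambda_b(x))$. In (ii) the inclusion $\subseteq$ is (i); for $\supseteq$ I take $I\in M(\lambda_b(x))$, note $\lambda_b(x)\le b$ gives $b\notin I$ by monotonicity, and observe that when $x$ is idempotent $\lambda_b(x)$ is its Boolean complement in $[0,b]$, so $x\wedge\lambda_b(x)=0$ and $M(x)\cap M(\lambda_b(x))=M(x\wedge\lambda_b(x))=M(0)=\emptyset$, forcing $x\in I$, i.e. $I\in M(b)\setminus M(x)$.

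For (iii), the first equality is pure set theory: $M(y)\setminus M(x\wedge y)=M(y)\setminus(M(x)\cap M(y))=M(y)\setminus M(x)$. For the inclusion I take $I\in M(y)\setminus M(x)$, so $x\in I$ and hence $x\wedge y\in I$; the identity (\ref{eq:x<y2}) reads $y=(x\wedge y)\oplus(y\odot\lambda_b(x))$, so if $y\odot\lambda_b(x)$ were in $I$, closure under $\oplus$ would give $y\in I$, a contradiction, whence $I\in M(y\odot\lambda_b(x))$; the final inclusion follows from $y\odot\lambda_b(x)\le\lambda_b(x)$ by monotonicity.

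Part (iv) is where semisimplicity enters and is the main obstacle. The direction $\Rightarrow$ is exactly (ii). For $\Leftarrow$, I first note that by (i) the asserted equality is equivalent to the single reverse inclusion $M(\lambda_b(x))\subseteq M(b)\setminus M(x)$, and since $M(\lambda_b(x))\subseteq M(b)$ always holds, this reduces to $M(\lambda_b(x))\cap M(x)=M(x\wedge\lambda_b(x))=\emptyset$. By Lemma \ref{le:Bel7}(ii) this says $x\wedge\lambda_b(x)\in\Rad(M)$, and semisimplicity yields $x\wedge\lambda_b(x)=0$ in $[0,b]$; in an $MV$-algebra this is equivalent (by the De Morgan identity $\lambda_b(x\wedge\lambda_b(x))=x\vee\lambda_b(x)$) to $x\vee\lambda_b(x)=b$, the condition characterising a Boolean, hence idempotent, element, so $x\in\mathcal I(M)$. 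For (v), (iii) already gives the first equality and the inclusion $M(y)\setminus M(x)\subseteq M(y\odot\lambda_b(x))$; for the reverse I use that $x,y\in\mathcal I(M)$ makes $y\odot\lambda_b(x)=y\wedge\lambda_b(x)$ (the Boolean meet), so $M(y\odot\lambda_b(x))=M(y)\cap M(\lambda_b(x))$, and substituting $M(\lambda_b(x))=M(b)\setminus M(x)$ from (ii) together with $M(y)\subseteq M(b)$ gives $M(y)\cap M(x)^{c}=M(y)\setminus M(x)$. Finally (vi) is the specialisation $b=1$: when $M$ has a top element $1$ every maximal ideal omits $1$, so $M(1)=\mathrm{MaxI}(M)$, and (ii) applied to the idempotent $a\le 1$ gives $M(\lambda_1(a))=M(1)\setminus M(a)=\mathrm{MaxI}(M)\setminus M(a)=M(a)^{c}$.
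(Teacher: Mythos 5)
Your proposal is correct and follows essentially the same route as the paper's proof: each part is reduced to the identities $M(x\wedge y)=M(x)\cap M(y)$, $M(0)=\emptyset$, monotonicity, the decomposition (\ref{eq:x<y2}), and, for (iv), the fact that an element lying in every maximal ideal belongs to $\Rad(M)$ and hence vanishes by semisimplicity. The only (harmless) variations are that in (iv) you phrase the radical step via Lemma \ref{le:Bel7}(ii) rather than elementwise, and in (v) you use $y\odot\lambda_b(x)=y\wedge\lambda_b(x)$ for idempotents plus set algebra where the paper runs a primeness argument; both are sound.
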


\begin{proof}
(i) Let $x\le b\in \mathcal I(M)$ and take $A \in M(b)\setminus M(x)$. Then $b \notin A$ and $x\in A$. We assert $\lambda_b(x)\notin A$. If not then from $b = x \oplus \lambda_b(x)$ we get a contradiction.

(ii) Assume that also $x$ is an idempotent and take $A \in M(\lambda_b(x))$. Due to $b = x\oplus \lambda_b(x)$, we have $\lambda_b(x)\notin A$ and $b \notin A$. Since $A$ is a prime ideal of $M$, then $0 =\lambda_a(x)\odot x= \lambda_b(x)\wedge x \in A$ entails $x \in  A$ so that $A \in  M(b) \setminus M(x)$.

(iii) Let $x,y \le b \in \mathcal I(M)$. We have $M(y)\setminus M(x\wedge y)
= M(x)\setminus (M(x)\wedge M(y))=M(x)\setminus M(y)$. Choose $A \in M(x)\setminus M(y)$. Then $x \notin A$ and $y\in A$. Due to (\ref{eq:x<y2}), we have $y = (x\wedge y)\oplus (y \odot \lambda_b(x))$ so that we get $y \odot \lambda_b(x)\notin A$. It is evident that $M(y \odot \lambda_b(x))\subseteq M(\lambda_b(x))$.

(iv) Now let $M$ be semisimple and $x \le b \in \mathcal I(M)$. If $x$ is idempotent, we have already established in (ii) $M(b)\setminus M(x)= M(\lambda_b(x))$. Conversely, let $M(b)\setminus M(x)= M(\lambda_b(x))$. Then for each $A \in M(b)$, we have either $x \in A$ or $\lambda_b(x)\notin A$. Whence $x\wedge \lambda_b(x)\in A$, and since $A\cap [0,b]$ is a maximal ideal of the $MV$-algebra $[0,b]$, \cite[Prop 3.23]{DvZa}, we have $x\wedge \lambda_b(x)\in [0,b]\cap A$; the same is true if $A\notin M(b)$, whence it holds for each maximal ideal $A$ of $M$. Since $M$ is semisimple, $x\wedge \lambda_b(x)=0$ and $x$ is an idempotent in the $MV$-algebra $[0,b]$, so it is an idempotent in $M$, too.

(v) Let $A \in M(y\odot \lambda_b(x))$. Then $y\odot\lambda_b(x)\notin A$ and $y,\lambda_b(x) \notin A$. Due to  (\ref{eq:x<y2}), we have $y = (x\wedge y)\oplus (y\odot\lambda_b(x))$ and $(x\wedge y)\wedge (y\odot\lambda_b(x))= (x\odot y)\odot (y\odot\lambda_b(x))=0\in A$ ($x,y$ and also $\lambda_b(x)$ are idempotents). Then $x\wedge y \in A$ and in addition, $x \in A$. Therefore, $A \in M(y)\setminus M(x)$.

(vi) If $1$ is a top element of $M$, $a \in \mathcal I(M)$, then the assertion follows from the above proved equality.
\end{proof}

\begin{prop}\label{pr:Bel15}
Let $M$ be a semisimple $EMV$-algebra. If $x = \bigvee_t x_t\in M$, then
$$
M(x) \setminus \bigcup_t M(x_t)
$$
is a nowhere dense subset of  $\mathrm{MaxI}(M)$.
\end{prop}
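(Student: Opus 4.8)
The plan is to exploit the infinite distributive law together with semisimplicity, and to reduce the nowhere-density assertion to a density statement about a convenient open set. Put $U:=\bigcup_t M(x_t)$, an open subset of $\mathrm{MaxI}(M)$. Since $x_t\le x$ for every $t$, Proposition \ref{pr:Bel6}(ii) gives $M(x_t)\subseteq M(x)$, hence $U\subseteq M(x)$ and the set under study is exactly $M(x)\setminus U$. For any open set $U$ its boundary $\overline U\setminus U$ is closed and nowhere dense, and subsets of nowhere dense sets are nowhere dense; therefore it suffices to prove $M(x)\setminus U\subseteq \overline U\setminus U$. As $M(x)\setminus U$ is already disjoint from $U$, this follows once $M(x)\subseteq \overline U$, i.e. once $U$ is dense in the open set $M(x)$.

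To establish $M(x)\subseteq\overline U$, I would fix $A\in M(x)$ and an arbitrary basic neighbourhood $M(a)\ni A$, and show $M(a)\cap U\ne\emptyset$. Intersecting with $M(x)$ and using $M(a)\cap M(x)=M(a\wedge x)$ from Proposition \ref{pr:Bel6}(iii), I may pass to the nonempty basic set $M(b)$ with $b:=a\wedge x\le x$ and $A\in M(b)\subseteq M(a)$. It is then enough to produce an index $t$ with $M(b)\cap M(x_t)=M(b\wedge x_t)\ne\emptyset$. Here semisimplicity enters: by Lemma \ref{le:Bel7}(ii) together with $\Rad(M)=\{0\}$ one has $M(c)=\emptyset$ iff $c=0$, so $M(b)\ne\emptyset$ forces $b\ne 0$, while $M(b\wedge x_t)\ne\emptyset$ is equivalent to $b\wedge x_t\ne 0$.

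Finally I would invoke the infinite distributive law. Since $x=\bigvee_t x_t$ exists in $M$ and $b\le x$, Proposition \ref{pr:Bel4} yields that $\bigvee_t(b\wedge x_t)$ exists and
$$\bigvee_t(b\wedge x_t)=b\wedge\bigvee_t x_t=b\wedge x=b\ne 0.$$
As the bottom element $0$ cannot be the supremum of a family all of whose members equal $0$, there is some $t$ with $b\wedge x_t\ne 0$, whence $M(b)\cap M(x_t)\ne\emptyset$ and a fortiori $M(a)\cap U\ne\emptyset$. Since $A\in M(x)$ and its basic neighbourhood $M(a)$ were arbitrary, every point of $M(x)$ lies in $\overline U$, i.e. $M(x)\subseteq\overline U$, and the reduction of the first paragraph completes the proof. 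The main obstacle is conceptual rather than computational: one must recognize that the lattice hypothesis $x=\bigvee_t x_t$ can be transported to the maximal-ideal level only through the infinite distributivity of Proposition \ref{pr:Bel4} (finite meets do not suffice), and that semisimplicity is precisely what makes the basic open sets $M(c)$ detect nonzero elements; the topological bookkeeping is then painlessly absorbed by the observation that the boundary of an open set is nowhere dense.
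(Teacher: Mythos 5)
Your proof is correct and is essentially the paper's argument read contrapositively: both reduce to showing that every nonempty basic set $M(b)$ with $0\ne b\le x$ meets some $M(x_t)$, using semisimplicity (so that $M(c)=\emptyset$ iff $c=0$) together with the distributivity of Proposition \ref{pr:Bel4}, the paper phrasing this as a contradiction from a basic open set inside $M(x)\setminus\bigcup_t M(x_t)$ and you phrasing it as density of $\bigcup_t M(x_t)$ in $M(x)$ plus the fact that the boundary of an open set is nowhere dense. One wording slip: the clause ``the bottom element $0$ cannot be the supremum of a family all of whose members equal $0$'' should say that a \emph{nonzero} element cannot be such a supremum; the conclusion you draw from it is of course the intended and correct one.
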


\begin{proof}
Let $x = \bigvee_t x_t $ and
suppose $M(x) \setminus \bigcup_t M(x_t)$ is not nowhere dense.
Since $\{M(y)\mid y \in M\}$ is
a base of the  topological space $\mathcal T_M$, there exists a non-zero
element $b \in M$ such that $\emptyset \ne M(b) \subseteq M(x) \setminus \bigcup_t M(x_t)$.
Due to $M(b) = M(b) \cap M(x) = M(b \wedge x)$, we take $b_0 :=
b \wedge x$ which is a non-zero element of $M$. Then $M(b_0)
\cap M(x_t) = \emptyset$ for any $t$, so that $M(b_0 \wedge x_t) =
\emptyset$ and the semisimplicity of $M$ yields $b_0 \wedge x_t=0$
for any $t$.

Using Proposition \ref{pr:Bel4}, we have
$$
b_0 = b_0 \wedge a= b_0 \wedge \bigvee_tx_t = \bigvee_t(b_0 \wedge x_t)
= 0,
$$
which gives $M(b) = \emptyset,$
 a contradiction, so that our assumption was false, and consequently,
$M(x) \setminus \bigcup_t M(x_t)$ is a nowhere dense set.
\end{proof}

\begin{prop}\label{pr:Bel16}
Let $M$ be a semisimple $EMV$-algebra and let $x_t \le x\le a \in \mathcal I(M)$ for any $t$. If $\bigcap_t M(x\odot \lambda_a(x_t))$ is a nowhere
dense subset of $\mathrm{MaxI}(M)$, then $x = \bigvee_t x_t$.
\end{prop}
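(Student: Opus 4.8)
The plan is to prove that $x$ is the least upper bound of the family $\{x_t\}$: it is trivially an upper bound since $x_t\le x$, so the work is to show that every upper bound of $\{x_t\}$ dominates $x$. Writing $d_t:=x\odot\lambda_a(x_t)$, the hypothesis reads that $\bigcap_t M(d_t)$ is nowhere dense. The engine of the argument is the elementary topological fact that an open set contained in a nowhere dense set must be empty, combined with the semisimplicity criterion (Lemma \ref{le:Bel7}(ii) together with $\Rad(M)=\{0\}$) that $M(y)=\emptyset$ exactly when $y=0$.

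First I would fix an arbitrary upper bound $z\in M$ of $\{x_t\}$ and reduce to the case $z\le a$. Since $x_t\le z$ and $x_t\le x\le a$ for every $t$, the element $z\wedge a$ is again an upper bound of $\{x_t\}$ and lies in $[0,a]$; moreover $x\le z\wedge a$ would give $x\le z$, so replacing $z$ by $z\wedge a$ costs nothing. With $z\in[0,a]$ I may form $x\odot\lambda_a(z)$ inside the $MV$-algebra $[0,a]$ and appeal to (\ref{eq:x<y2}).

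The heart of the proof is a short chain of comparisons. From $x_t\le z$ and the order-reversing property of $\lambda_a$ we obtain $\lambda_a(z)\le\lambda_a(x_t)$, and then monotonicity of $\odot$ in the $MV$-algebra $[0,a]$ gives $x\odot\lambda_a(z)\le x\odot\lambda_a(x_t)=d_t$ for each $t$. By Proposition \ref{pr:Bel6}(ii) this yields $M(x\odot\lambda_a(z))\subseteq M(d_t)$ for all $t$, hence $M(x\odot\lambda_a(z))\subseteq\bigcap_t M(d_t)$. Now $M(x\odot\lambda_a(z))$ is a basic open set contained in the nowhere dense set $\bigcap_t M(d_t)$, so it must be empty, and semisimplicity then forces $x\odot\lambda_a(z)=0$. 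Finally, using (\ref{eq:x<y2}) I conclude $x=(x\wedge z)\oplus(x\odot\lambda_a(z))=x\wedge z$, that is $x\le z$. Since $z$ was an arbitrary upper bound, $x=\bigvee_t x_t$.

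I expect the only genuinely delicate point to be the reduction step and the bookkeeping that keeps all the operations inside a single $MV$-algebra $[0,a]$, so that (\ref{eq:x<y2}), the monotonicity of $\odot$ and $\lambda_a$, and the nowhere-dense argument can be applied simultaneously. The topological core, namely that an open subset of a nowhere dense set is empty, is routine, as is the translation $M(y)=\emptyset\Leftrightarrow y=0$ coming from semisimplicity.
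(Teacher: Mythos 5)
Your proof is correct, and it rests on the same two pillars as the paper's: the monotonicity inequality $x\odot\lambda_a(z)\le x\odot\lambda_a(x_t)$ obtained from $x_t\le z$, and the semisimplicity translation of $M(w)=\emptyset$ into $w=0$ (Lemma \ref{le:Bel7}(ii) with $\Rad(M)=\{0\}$). The route differs in its logical organization, and yours is the more direct one: you observe that the open set $M(x\odot\lambda_a(z))$ is contained in the nowhere dense set $\bigcap_t M(x\odot\lambda_a(x_t))$, hence empty, hence $x\odot\lambda_a(z)=0$ and $x=x\wedge z$ by (\ref{eq:x<y2}). The paper instead argues by contradiction: it first reduces to upper bounds $y$ with $x_t\le y\le x$ (playing the role of your reduction to $z\wedge a$), assumes $y\ne x$ so that $M(x\odot\lambda_a(y))$ is non-empty and open, uses nowhere density to extract a non-empty basic open set $M(z')\subseteq M(x\odot\lambda_a(y))$ disjoint from $\bigcap_t M(x\odot\lambda_a(x_t))$, and derives a contradiction at a single maximal ideal $A\in M(z')$ via the same inequality. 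What your version buys is the elimination of the auxiliary open set, the appeal to the base $\{M(y)\mid y\in M\}$, and the pointwise argument at $A$; everything you invoke ((\ref{eq:x<y2}), Proposition \ref{pr:Bel6}(ii), order-reversal of $\lambda_a$ and monotonicity of $\odot$ inside $[0,a]$) is available, so there is no gap.
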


\begin{proof}
It is clear that in order to prove $x = \bigvee_t x_t$
it is sufficient to verify that $x_t \le y \le x$ for any $t$
implies $y = x$.

So let $\bigcap_t M(x\odot \lambda_a(x_t))$ be a nowhere dense set, and let $y \ne x$ for some $y \ge x_t$, $y \le x$. Then $x \odot \lambda_a(y) \ne 0$ and $M(x \odot \lambda_a(y))$ is a non-empty open subset of $\mathrm{MaxI}(M)$. By assumptions, there exists a non-zero open subset $O \subseteq M(x\odot \lambda_a(y))$ such that $O \cap \bigcap_t M(x\odot \lambda_a(x_t)) = \emptyset$. Consequently,
there is a non-zero element $z \in M$ such that $M(z) \subseteq
O$. Hence, for any $A \in M(z) \subseteq M(x \odot \lambda_a(y))$, we have
$z \notin A$, $x \odot \lambda_a(y) \notin A$ and $A \notin \bigcap_t M(x\odot \lambda_a(x_t))$.
This entails that there is an index $t$ such that $x \odot \lambda_a(x_t)
\in A$. Since $x_t \le y$, we have $x \odot \lambda_a(y) \le x \odot \lambda_a(x_t) \in
A$ which implies $x \odot \lambda_a(y) \in A$, and this is a contradiction
with $x \odot \lambda_a(y) \notin A$. Finally, our assumption $y < x$ was
false, and whence $y = x$ and $x = \bigvee_t x_t$.
\end{proof}

\begin{cor}\label{co:Bel17}
Let $M$ be a generalized Boolean algebra. Let $\{x_t\}$ be a system of elements of $M$ which is majorized by $x\in M$. Then $x=\bigvee_t x_t$ if and only if $M(x)\setminus \bigcup_t M(x_t)$ is a nowhere dense set of $\mathrm{MaxI}(M)$.
\end{cor}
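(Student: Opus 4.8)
The plan is to deduce the corollary from the two immediately preceding propositions, exploiting the fact that a generalized Boolean algebra is a semisimple $EMV$-algebra in which every element is idempotent. First I would record semisimplicity: since every element of $M$ is idempotent, we have $\mathcal I(M)=M$, and each interval $[0,a]$ is then a Boolean algebra, hence a semisimple $MV$-algebra; by Proposition \ref{pr:Bel1} (condition (iii) $\Rightarrow$ (iv)) the algebra $M$ itself is semisimple. This licenses the use of Propositions \ref{pr:Bel15} and \ref{pr:Bel16}, both of which require semisimplicity.

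The forward implication is then immediate. If $x=\bigvee_t x_t$, then Proposition \ref{pr:Bel15} applies directly and yields that $M(x)\setminus\bigcup_t M(x_t)$ is nowhere dense in $\mathrm{MaxI}(M)$.

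For the converse the key step is the set-theoretic identity
\[
M(x)\setminus\bigcup_t M(x_t)=\bigcap_t M(x\odot\lambda_a(x_t)),
\]
where $a=x$ is an idempotent majorizing every $x_t$. Rewriting the left-hand side as $\bigcap_t\big(M(x)\setminus M(x_t)\big)$ reduces the claim to the single-index equality $M(x)\setminus M(x_t)=M(x\odot\lambda_a(x_t))$, which is precisely Lemma \ref{le:Bel13}(v) applied with the lemma's $y$ taken to be $x$, its $x$ taken to be $x_t$, and $b=a$; all the elements involved are idempotent, as that part of the lemma demands, and $x_t\wedge x=x_t$ because $x_t\le x$. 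With this identity in hand, the hypothesis that $M(x)\setminus\bigcup_t M(x_t)$ is nowhere dense becomes verbatim the hypothesis of Proposition \ref{pr:Bel16}, namely that $\bigcap_t M(x\odot\lambda_a(x_t))$ is nowhere dense, under the required bounds $x_t\le x\le a=x\in\mathcal I(M)$. That proposition then gives $x=\bigvee_t x_t$, closing the equivalence.

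The only point requiring genuine care — the main, if modest, obstacle — is the bookkeeping in the displayed identity: one must verify that the idempotency hypotheses of Lemma \ref{le:Bel13}(v) are actually met (they are, since $\mathcal I(M)=M$ in a generalized Boolean algebra) and that distributing the intersection across the set complement is legitimate, so that the nowhere-dense condition stated in the corollary is literally the condition that Proposition \ref{pr:Bel16} consumes. Everything else is an application of results already in place.
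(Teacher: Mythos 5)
Your proof is correct and follows essentially the same route as the paper: semisimplicity of a generalized Boolean algebra, Proposition \ref{pr:Bel15} for the forward direction, and Lemma \ref{le:Bel13}(v) combined with Proposition \ref{pr:Bel16} for the converse. In fact your careful bookkeeping of $M(x)\setminus\bigcup_t M(x_t)=\bigcap_t M(x\odot\lambda_x(x_t))$ is slightly cleaner than the paper's own writeup, which at the corresponding step writes a union where an intersection is needed to match the hypothesis of Proposition \ref{pr:Bel16}.
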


\begin{proof}
By \cite[Lem 4.8]{DvZa}, $M$ is a semisimple $EMV$-algebra.
If $x = \bigvee_t x_t$, the statement follows from Proposition \ref{pr:Bel15}.  Conversely, let $M(x)\setminus \bigcup_t M(x_t)$ be a nowhere dense. Then by Lemma \ref{le:Bel13}(v), we have $M(\lambda_x(x_t))=M(x\wedge \lambda_x(x_t))= M(x \odot \lambda_x(x_t))=M(x)\setminus M(x_t)$, so that $\bigcup_t M(\lambda_x(x_t))=M(x)\setminus \bigcap_t M(x_t)$ is a nowhere dense set and applying Proposition \ref{pr:Bel16}, $x= \bigvee_t x_t$.
\end{proof}

\begin{cor}\label{co:Bel18}
A generalized Boolean algebra $M$ is Dedekind $\sigma$-complete if and only if, for each sequence $\{a_n\}$ of elements of $M$ which is majorized by an element $a\in M$, we have $\bigvee_n a_n = a$ if and only if $M(a)\setminus \bigcup_n M(a_n)$ is a nowhere dense set of $\mathrm{MaxI}(M)$.
\end{cor}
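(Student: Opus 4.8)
The plan is to obtain Corollary~\ref{co:Bel18} as an almost immediate consequence of Corollary~\ref{co:Bel17}, since the inner equivalence ``$\bigvee_n a_n=a$ iff $M(a)\setminus\bigcup_n M(a_n)$ is nowhere dense'' is precisely Corollary~\ref{co:Bel17} read for a sequence rather than an arbitrary indexed family. The substantive content therefore lies only in the coupling with Dedekind $\sigma$-completeness, which comes down to the existence of suprema of majorized sequences.

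First I would record the ambient facts. By \cite[Lem 4.8]{DvZa} every generalized Boolean algebra $M$ is a semisimple $EMV$-algebra, so that Corollary~\ref{co:Bel17} together with Propositions~\ref{pr:Bel15} and~\ref{pr:Bel16} behind it are available. Moreover every element of $M$ is idempotent, so an element $a$ majorizing a sequence $\{a_n\}$ may itself serve as the idempotent bound required in Lemma~\ref{le:Bel13}(v); this legitimizes the identity $\bigcap_n M(a\odot\lambda_a(a_n))=M(a)\setminus\bigcup_n M(a_n)$, which converts the nowhere-dense hypothesis into the hypothesis of Proposition~\ref{pr:Bel16}.

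For the forward implication, assume $M$ is Dedekind $\sigma$-complete and let $\{a_n\}$ be majorized in $M$. By definition $a:=\bigvee_n a_n$ exists and is an upper bound, and Corollary~\ref{co:Bel17} applied to this $a$ yields the equivalence between $\bigvee_n a_n=a$ and nowhere-density of $M(a)\setminus\bigcup_n M(a_n)$; since the equality holds here, the set is nowhere dense. Thus each majorized sequence admits an upper bound, namely its supremum, witnessing the nowhere-dense condition.

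For the converse, suppose every majorized sequence $\{a_n\}$ has an upper bound $a\in M$ for which $M(a)\setminus\bigcup_n M(a_n)$ is nowhere dense; by the converse direction of Corollary~\ref{co:Bel17} this forces $a=\bigvee_n a_n$, so the supremum exists in $M$ and, $\{a_n\}$ being arbitrary, $M$ is Dedekind $\sigma$-complete. I expect this converse to be the crux: the topological manipulations are routine once Corollary~\ref{co:Bel17} is in hand, and the real work is the \emph{existence} of the supremum, which is exactly the output of Proposition~\ref{pr:Bel16}. Since Corollary~\ref{co:Bel17} already makes the inner biconditional hold in any semisimple $M$, the force of the statement is that such a nowhere-dense witness $a$ can be produced for every majorized sequence, and it is this condition that is equivalent to $\sigma$-completeness.
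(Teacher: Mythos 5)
Your proof is correct and follows exactly the paper's route: the paper's entire proof of this corollary is the single line ``It follows from Corollary \ref{co:Bel17}'', and your argument simply makes both directions of that reduction explicit (supremum exists $\Rightarrow$ it is a majorant witnessing the nowhere-dense condition by the forward half of Corollary \ref{co:Bel17}; a majorant witnessing that condition exists $\Rightarrow$ it is the supremum by the converse half). Your remark that the statement must be read existentially --- since the inner biconditional of Corollary \ref{co:Bel18} holds unconditionally in any generalized Boolean algebra by Corollary \ref{co:Bel17}, the actual content is that every majorized sequence admits a majorant $a$ with $M(a)\setminus\bigcup_n M(a_n)$ nowhere dense --- is the right reading and the one the paper implicitly relies on.
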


\begin{proof}
It follows from Corollary \ref{co:Bel17}.
\end{proof}

\begin{prop}\label{pr:Bel19}
Let $M$ be an $EMV$-algebra. For each $x \in M$, we have
\begin{equation}\label{eq:nx}
M(x)= \bigcup_{n=1}^\infty\Big(M(a)\setminus M(\lambda_a(n.x))\Big),
\end{equation}
where $a$ is an idempotent of $M$ such that $x\le a$.
\end{prop}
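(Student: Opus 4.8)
The plan is to test the asserted identity on an arbitrary maximal ideal $I\in\mathrm{MaxI}(M)$ and to translate every membership condition through the unique state-morphism $s$ with $\Ker(s)=I$ furnished by Lemma \ref{le:Bel5}. Unravelling the definitions, $I\in M(x)$ says $s(x)>0$, whereas $I$ lies in the right-hand union exactly when there is an $n$ with $s(a)>0$ and $s(\lambda_a(n.x))=0$. The structural facts I would lean on are: $a\in\mathcal I(M)$ forces $s(a)$ to be idempotent in $[0,1]$, hence $s(a)\in\{0,1\}$; monotonicity of $s$ gives $s(x)\le s(a)$; and the $EMV$-homomorphism law yields $s(\lambda_a(n.x))=\lambda_{s(a)}(s(n.x))$ with $s(n.x)=\min(n\,s(x),1)$, using $n.x\le n.a=a$.

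For the inclusion $\supseteq$ I would argue algebraically, without states. Fix $n$ with $a\notin I$ and $\lambda_a(n.x)\in I$. From $n.x\oplus\lambda_a(n.x)=a$ and the closure of the ideal $I$ under $\oplus$, the assumption $n.x\in I$ would force $a\in I$, a contradiction; hence $n.x\notin I$, and then $x\notin I$ since $x\in I$ would give $n.x\in I$. Thus $I\in M(x)$.

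For $\subseteq$, assume $x\notin I$. That $a\notin I$ is immediate from $x\le a$ and the downward closure of $I$. The remaining task --- the genuine content of the proof --- is to produce an $n$ with $\lambda_a(n.x)\in I$, and here I would invoke the state-morphism: $s(x)>0$ together with $s(a)\ge s(x)>0$ forces $s(a)=1$, so $s(n.x)=\min(n\,s(x),1)=1$ as soon as $n\ge 1/s(x)$, whence $s(\lambda_a(n.x))=\lambda_1(1)=0$ and $\lambda_a(n.x)\in\Ker(s)=I$. Equivalently, one passes to the simple $MV$-algebra $[0,a]/(I\cap[0,a])$, where the nonzero image of $x$ satisfies $n.\overline{x}=\overline{a}$ for some $n$ by the Archimedeanicity of simple $MV$-algebras, giving $\overline{\lambda_a(n.x)}=0$.

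I expect the main obstacle to be precisely this Archimedean amplification step: everything hinges on converting the purely qualitative information $x\notin I$ into the quantitative statement $s(x)>0$ and then boosting it to $n.x\equiv a$, which is exactly what the passage to state-morphisms (or to the simple quotient) buys us. The only side condition to monitor is the degenerate case $a\in I$: there $s(a)=0$ and $s(x)=0$, so $I$ sits in neither side, and since $M(a)\setminus M(\lambda_a(n.x))\subseteq M(a)$ automatically excludes every $I$ containing $a$, no spurious maximal ideals are introduced on the right. With the dichotomy $s(a)\in\{0,1\}$ and this amplification in hand, the two inclusions close up and the identity follows.
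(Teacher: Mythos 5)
Your proof is correct, and its skeleton is the same as the paper's: test membership of an arbitrary maximal ideal in both sides of (\ref{eq:nx}) and exploit the identity $a=n.x\oplus\lambda_a(n.x)$ together with the ideal axioms. Your argument for the inclusion $\supseteq$ is essentially verbatim the paper's. The one point of genuine divergence is the key step of the inclusion $\subseteq$, namely producing, for a maximal ideal $A$ with $x\notin A$, an integer $n$ with $\lambda_a(n.x)\in A$: the paper asserts this directly, implicitly appealing to the standard characterization of maximal ideals of an $EMV$-algebra (the criterion that for every $x\notin A$ some $\lambda_a(n.x)$ lies in $A$, which also underlies the radical formula (\ref{eq:Rad})), whereas you derive it quantitatively from the state-morphism correspondence of Lemma \ref{le:Bel5}, using $s(a)\in\{0,1\}$, $s(n.x)=\min(n\,s(x),1)$ and $s(\lambda_a(n.x))=\lambda_{s(a)}(s(n.x))$. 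The two justifications are equivalent in content --- the state-morphism correspondence is itself built on that maximality criterion --- but your version has the merit of making the Archimedean amplification explicit rather than leaving it as an unreferenced claim, at the mild cost of importing the ideal/state-morphism bijection into what is otherwise a purely ideal-theoretic statement. Your handling of the degenerate case $a\in I$ is also consistent with the paper's separate treatment of $x\in\Rad(M)$.
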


\begin{proof}
If $x\in \Rad(M)$, then $M(x)=\emptyset$. If $a \in \Rad(M)$, then $M(a)=M(\lambda_a(n.x))=\emptyset$ and (\ref{eq:nx}) holds. If $a\notin \Rad(M)$, then $M(a)\ne \emptyset$.  From $a = n.x \oplus \lambda_a(n.x)$ we conclude $A\in M(a)$ iff $\lambda_a(n.x)\notin A$, so that $M(a)=M(\lambda_a(n.x))$ for each $n\ge 1$, henceforth (\ref{eq:nx}) holds.

Now let $x \notin \Rad(M)$. Then $M(x)\ne \emptyset$ and let $A\in M(x)$. Again from $a = n.x \oplus \lambda_a(n.x)$, we conclude $A \notin M(a)$ and there is an integer $n\ge 1$ such that $\lambda_a(n.x)\in A$. Therefore, $M(x) \subseteq \bigcup_{n=1}^\infty(M(a)\setminus M(\lambda_a(n.x)))$.

Now, if $\bigcup_{n=1}^\infty(M(a)\setminus M(\lambda_a(n.x)))$ is empty, then $M(x)=\emptyset$ and the equality holds. Thus let $A \in \bigcup_{n=1}^\infty(M(a)\setminus M(\lambda_a(n.x)))$. There is an integer $n\ge 1$ such that $A \in M(a)\setminus M(\lambda_a(n.x))$ which means $a\notin A$ and $\lambda_a(n.x)\in A$. From $a = n.x \oplus \lambda_a(n.x)$, we have $n.x \notin A$, so that $x\notin A$ and $A \in M(x)$ which proves (\ref{eq:nx}).
\end{proof}

\section{Loomis--Sikorski Theorem for $\sigma$-complete $EMV$-algebras}

In this section, we define a stronger notion of $\sigma$-complete $EMV$-algebras than Dedekind complete $EMV$-algebras and for them we establish a variant of the Loomis--Sikorski theorem which will say that every $\sigma$-complete $EMV$-algebra is a $\sigma$-homomorphic image of some $\sigma$-complete $EMV$-tribe of fuzzy sets, where all operations are defined by points.

We say that an $EMV$-algebra $M$ is $\sigma$-{\it complete} if any countable family  $\{x_n\}$ of elements of $M$ has the least upper bound in $M$. Clearly, every $\sigma$-complete $EMV$-algebra is Dedekind $\sigma$-complete. Therefore, all results of the previous section concerning Dedekind $\sigma$-complete $EMV$-algebras are valid also for $\sigma$-complete ones. We note that both notions coincide if $M$ has a top element. In opposite case these notions may be different. Indeed, let $\mathcal T$ be the set of all finite subsets of the set $\mathbb N$ of natural numbers. Then $\mathcal T$ is a generalized Boolean algebra that is Dedekind $\sigma$-complete but not $\sigma$-complete. On the other hand, if $\mathcal T$ is a system of all finite or countable subsets of the set of reals, then $\mathcal T$ is a $\sigma$-complete generalized Boolean algebra without top element.

\begin{lem}\label{le:LS5}
Let $M$ be a $\sigma$-complete $EMV$-algebra. Then no non-empty open set of $\mathcal{SM}(M)$ can be expressed as a countable union of nowhere dense sets.
\end{lem}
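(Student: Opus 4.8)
The plan is to reduce the statement to the Baire property of $\mathcal{SM}(M)$. This has essentially already been recorded in Corollary \ref{co:Baire}: since $\mathcal{SM}(M)$ is locally compact, Hausdorff and (completely) regular by Theorem \ref{th:Bel11} and Proposition \ref{pr:Bel8}, it is a Baire space. The present lemma is just the reformulation of the Baire property phrased in terms of open sets rather than dense $G_\delta$'s, so what remains is to carry out this reformulation. I note in passing that this argument uses only local compactness and the Hausdorff property, not the full strength of $\sigma$-completeness; the assumption is inherited from the ambient section.

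Concretely, I would argue by contradiction. Suppose $O$ is a non-empty open subset of $\mathcal{SM}(M)$ with $O=\bigcup_n E_n$, where each $E_n$ is nowhere dense. Replacing $E_n$ by its closure $F_n:=\overline{E_n}$ (which remains nowhere dense), it suffices to exhibit a point of $O$ lying outside every $F_n$. To this end I build a decreasing sequence of non-empty open sets $U_1\supseteq U_2\supseteq\cdots$ with compact closures such that $\overline{U_1}\subseteq O\setminus F_1$ and $\overline{U_{n+1}}\subseteq U_n\setminus F_{n+1}$ for all $n$. The inductive step is possible because $F_{n+1}$ is nowhere dense, so $U_n\setminus F_{n+1}$ is a non-empty open set, and then the local compactness of $\mathcal{SM}(M)$ (Theorem \ref{th:Bel11}) allows me to shrink it to a non-empty open $U_{n+1}$ whose closure is compact and contained in $U_n\setminus F_{n+1}$.

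The sets $\overline{U_n}$ then form a nested family of non-empty compact subsets of the compact set $\overline{U_1}$, so by the finite intersection property $\bigcap_n\overline{U_n}\ne\emptyset$. Any $s$ in this intersection satisfies $s\in\overline{U_1}\subseteq O$, and yet $s\in\overline{U_n}\subseteq\mathcal{SM}(M)\setminus F_n$ for every $n$, whence $s\notin\bigcup_n F_n\supseteq\bigcup_n E_n=O$ --- a contradiction. Thus no such decomposition of $O$ can exist.

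The only point requiring genuine care is the inductive construction of the $U_n$: one must invoke local compactness to guarantee a compact closure at each stage, since it is precisely this that forces the nested intersection to be non-empty. In the $\sigma$-complete setting one may instead work with the base of compact open sets guaranteed by total disconnectedness together with local compactness (Corollary \ref{co:Bel12}), for instance choosing each $U_n$ among the compact clopen sets $S(a)$, $a\in\mathcal I(M)$, of Theorem \ref{th:Bel11}; this trivialises the compactness-of-closure issue, as the chosen sets are then already compact and closed. Either way the argument is the classical proof that a locally compact Hausdorff space is a Baire space, specialised to the situation of an open set covered by nowhere dense pieces.
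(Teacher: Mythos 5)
Your proof is correct, and its engine --- a nested sequence of non-empty compact sets dodging the successive nowhere dense pieces, finished off by the finite intersection property --- is the same as in the paper's proof. The route to that engine differs, though. The paper first invokes Proposition \ref{pr:Bel3} to get the general comparability property and Theorem \ref{th:Bel10} to transfer the whole problem to $\mathrm{MaxI}(\mathcal I(M))$, and then runs the induction inside the concrete base $\{M(x)\mid x\in \mathcal I(M)\}$ of compact clopen sets supplied by Theorem \ref{th:Bel11}, taking $y_n=x_1\wedge\cdots\wedge x_n$ so that the nested sets are literally of the form $M(y_n)$. You stay in $\mathcal{SM}(M)$ and use only local compactness and Hausdorffness to shrink each open set to one with compact closure; this is the classical argument that a locally compact Hausdorff space is Baire. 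What your route buys is generality and economy: as you observe, it needs neither $\sigma$-completeness nor comparability, so it establishes the statement for every $EMV$-algebra, and it is indeed just the meager-open-set reformulation of Corollary \ref{co:Baire}, which the paper has already obtained (by citing Kelley's Baire theorem) before this lemma appears. What the paper's route buys is that the construction stays entirely within the algebraic apparatus of idempotents and the sets $M(a)$, the form in which these compact clopen sets are reused later in the Loomis--Sikorski argument; the variant you sketch using the compact clopen sets $S(a)$, $a\in\mathcal I(M)$, from Theorem \ref{th:Bel11} recovers exactly that feature while keeping your simpler framework.
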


\begin{proof}
By Proposition \ref{pr:Bel3}, $M$ satisfies the general comparability property, and by Theorem \ref{th:Bel10}, the spaces $\mathcal{SM}(M)$, $\mathrm{MaxI}(M)$, $\mathcal{SM}(\mathcal I(M))$ and $\mathrm{MaxI}(\mathcal I (M))$ are mutually homeomorphic spaces. In addition, $\mathcal I(M)$ is $\sigma$-complete. Therefore, we prove lemma for $\mathrm{MaxI}(\mathcal I (M))$. We note, that given $x \in \mathcal I(M)$, $M(x)=\{I \in \mathrm{MaxI}(\mathcal I (M))\mid x\notin I\}$, and by Theorem \ref{th:Bel11}, $M(x)$ is clopen and compact.

Let $O\ne\emptyset$ be an open set of $\mathrm{MaxI}(\mathcal I (M))$ and let $O=\bigcup_n S_n$, where each $S_n$ is a nowhere dense subset of $\mathrm{MaxI}(\mathcal I (M))$. Let $O_0$ be a non-empty open set, there is $x_1\ne 0$ such that $M(x_1)\subseteq O_0$ and $M(x_1)\cap S_1=\emptyset$. Since also $S_2$ is nowhere dense, in the same way, there is $0<x_2\in M$ such that $M(x_2)\subseteq M(x_1)$ and $M(x_2)\cap S_2=\emptyset$. By induction, we obtain a sequence of non-zero elements $\{x_n\}$ such that $M(x_{n+1})\subseteq M(x_n)$ and $M(x_n)\cap S_n=\emptyset$. We define $y_n=x_1\wedge\cdots\wedge x_n$ for each $n\ge 1$. Then $M(y_n)=M(x_n)$, $n\ge 1$, and $M(y_n)\subseteq M(y_1)$. Put $y_0 = \bigwedge_n y_n$. Since $M(y_1)$ is compact, $\bigcap_n M(y_n)\ne \emptyset$, otherwise there is an integer $n_0$ such that $M(y_{n_0})= \bigcap_{i=1}^{n_0}M(y_i) = \emptyset$, a contradiction.

Therefore, there is a maximal ideal $I$ belonging to each $M(y_n)$ and $I \notin S_n$, so that $I \notin \bigcup_n S_n$ which is absurd, and the lemma is proved.
\end{proof}

Given an element $x \in M$, the set $S(x)$ was defined  as $S(x)=\{s \in \mathcal{SM}(M) \mid s(x)>0\}$.

\begin{thm}\label{th:LS2}
Let $M$ be a $\sigma$-complete $EMV$-algebra. For each $x \in M$, we define
\begin{equation}\label{eq:a(x)}
a_0(x):=\bigvee_n n.x.
\end{equation}
Then $a_0(x)$ is an idempotent of $M$ such that $a_0(x)\ge x$ and
\begin{equation}\label{eq:LSa}
a_0(x)=\bigwedge\{a\in \mathcal I(M)\mid a\ge x\}.
\end{equation}
In addition, $\overline{S(x)}=S(a_0(x))$, and if $\overline{S(x)}=S(b)$ for some idempotent $b\in \mathcal I(M)$, then $a_0(x)=b$.

On the other hand, there is an idempotent $b_0(x)$ of $M$ such that
$$b_0(x)=\bigwedge_n x^n
$$
and
\begin{equation}\label{eq:LSb}
b_0(x)= \bigvee \{b\in \mathcal I(M) \mid b\le x\}.
\end{equation}

{\rm (1)} If $y$ is an element of $M$ such that $x\le y$ and if $b$ is an idempotent with $\overline{S(y)}=S(b)$, then $a_0(x)\le b$.

{\rm (2)} Let $x, x_1,\ldots $ and $a,a_1,\ldots$ be a sequence of elements of $M$ and $\mathcal I(M)$, respectively, such that $\overline{S(x)}=S(a)$ and $\overline{S(x_n)}= S(a_n)$ for each $n\ge 1$. If $x=\bigvee_n x_n$, then $a =\bigvee_n a_n$.
\end{thm}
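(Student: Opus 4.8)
The plan is to reduce everything to the canonical idempotents $a_0(\cdot)$ supplied by the body of the theorem and then run a short order‑theoretic squeeze based on the meet characterization (\ref{eq:LSa}). First I would use the uniqueness clause already proved: since $\overline{S(x)}=S(a)$ and $\overline{S(x_n)}=S(a_n)$ with $a,a_n\in\mathcal I(M)$ idempotent, we get $a=a_0(x)$ and $a_n=a_0(x_n)$ for every $n$. From the inequality $a_0(y)\ge y$ we read off $x_n\le a_0(x_n)=a_n$, and from the monotonicity of $a_0$ (immediate from (\ref{eq:LSa}), since $x_n\le x$ enlarges the set of idempotent upper bounds, or directly from part (1) with $y=x$, $b=a$) we get $a_n=a_0(x_n)\le a_0(x)=a$. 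Thus $\{a_n\}$ is a sequence of idempotents majorized by $a$.

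Next I would form $c:=\bigvee_n a_n$. Because $M$ is $\sigma$-complete and the $a_n$ are bounded above by $a$, this join exists, and by Proposition \ref{pr:Bel3} the subset $\mathcal I(M)$ is a $\sigma$-complete subalgebra, so $c$ is again an idempotent and $c\le a$. It then remains only to establish the reverse inequality $a\le c$.

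For this I would combine $x_n\le a_n\le c$ for all $n$: hence $c$ is an upper bound of the sequence $\{x_n\}$, and since $x=\bigvee_n x_n$ is the \emph{least} upper bound, $x\le c$. Now $c$ is an idempotent lying above $x$, so $c$ belongs to the set $\{e\in\mathcal I(M)\mid e\ge x\}$ whose infimum defines $a_0(x)$ in (\ref{eq:LSa}); therefore $a=a_0(x)=\bigwedge\{e\in\mathcal I(M)\mid e\ge x\}\le c$. Together with $c\le a$ this gives $a=c=\bigvee_n a_n$, which is the assertion.

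The argument is essentially formal once the identifications $a=a_0(x)$ and $a_n=a_0(x_n)$ are in place, so the only genuine points of care — the steps I expect to be the real obstacle — are verifying that $c=\bigvee_n a_n$ is truly an idempotent and that $a_n\le a$; both hinge on facts already secured in Proposition \ref{pr:Bel3} (that $a_0$ is monotone and that $\mathcal I(M)$ is $\sigma$-complete). An alternative, more topological route would bypass $a_0$: by semisimplicity and Proposition \ref{pr:Bel15} the sets $S(x)\setminus\bigcup_n S(x_n)$ and $S(c)\setminus\bigcup_n S(a_n)$ are nowhere dense, and since $S(a),S(c)$ are clopen while $\overline{S(x_n)}=S(a_n)$, one deduces $S(a)=\overline{S(x)}=\overline{\bigcup_n S(a_n)}=S(c)$ after discarding nowhere dense sets, whence $S(a)=S(c)$ and finally $a=c$ because state-morphisms separate points. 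I would keep the order‑theoretic version as the primary proof, as it is markedly shorter.
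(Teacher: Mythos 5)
Your argument for item (2) is correct and is essentially the paper's own proof of that item: the paper likewise puts $a_0=\bigvee_n a_n$, observes $a_0\ge a_n\ge x_n$, hence $a_0\ge x$ and $a_0\ge a_0(x)=a$ by (\ref{eq:LSa}), and obtains the reverse inequality because every idempotent $b\ge x$ dominates every $x_n$, hence every $a_n$, hence $a_0$. Your squeeze via $c=\bigvee_n a_n\le a$ (using monotonicity of $a_0(\cdot)$ and Proposition \ref{pr:Bel3} for the idempotency of $c$) together with $a=a_0(x)\le c$ is the same computation in a slightly different order, and it is sound. Your alternative topological route would also work, but it is not a free lunch: passing from ``nowhere dense difference'' to equality of the closures requires Lemma \ref{le:LS5}, not merely ``discarding nowhere dense sets.''

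The genuine gap is that item (2) is only the last and easiest clause of the theorem. Everything you describe as ``supplied by the body of the theorem'' or ``already proved'' --- that $a_0(x)=\bigvee_n n.x$ is an idempotent satisfying (\ref{eq:LSa}), that $\overline{S(x)}=S(a_0(x))$, that the idempotent $b$ with $\overline{S(x)}=S(b)$ is unique, the dual assertions for $b_0(x)=\bigwedge_n x^n$ and (\ref{eq:LSb}), and item (1) --- is itself part of the statement to be proved, so invoking it is circular. The identifications $a=a_0(x)$ and $a_n=a_0(x_n)$, on which your whole argument rests, are exactly the uniqueness claim you have not established. The deepest of the omitted claims is $\overline{S(x)}=S(a_0(x))$: in the paper this requires Proposition \ref{pr:Bel15} (so that $S(a_0(x))\setminus S(x)$ is nowhere dense), Theorem \ref{th:Bel11} (so that $S(a_0(x))$ is compact and clopen, making $S(a_0(x))\setminus\overline{S(x)}$ open), and Lemma \ref{le:LS5} (a Baire-category property of $\mathcal{SM}(M)$ for $\sigma$-complete $M$, forcing that open nowhere dense set to be empty). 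None of this machinery appears in your proposal, so what you have is a correct proof of item (2) conditional on the rest of the theorem, not a proof of the theorem.
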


\begin{proof}
Since $M$ is $\sigma$-complete, the element $a_0(x)=\bigvee_n n.x$ exists in $M$ for each $x \in M$. Using \cite[Prop 1.21]{georgescu}, we have $a_0(x)\oplus a_0(x)= a_0(x)\oplus \bigvee_n n.x=\bigvee_n(a_0(x)\oplus n.x)=\bigvee_n\bigvee_m (n+m).x=a_0(x)$, so that $a_0(x)$ is an idempotent of $M$. Now let $b\in \mathcal I(M)$ be an idempotent such that $x\le b$. Then $n.x\le b$ for each integer $n$, so that $a_0(x)\le b$ which yields (\ref{eq:LSa}).

Since $S(x.n)=S(x)$ for each $n\ge 1$, we have $\bigcup_n S(n.x)\subseteq S(a_0(x))$, which by Proposition \ref{pr:Bel15} means that $S(a_0(x))\setminus \bigcup_n S(n.x)=S(a_0(x))\setminus S(x)$ is a nowhere dense subset of $\mathcal{SM}(M)$. Then $\overline {S(x)}=\overline {S(n.x)} \subseteq S(a_0(x))$.  Because $S(a_0(x))$ is compact and clopen by Theorem \ref{th:Bel11}, $S(a_0(x))\setminus \overline {S(x)}\subseteq S(a_0(x)) \setminus S(x)$, which gives $S(a_0(x))\setminus \overline {S(x)}$ is nowhere dense and open. Lemma \ref{le:LS5} yields, $S(a_0(x))\setminus \overline S(x)=\emptyset$ and $S(a_0(x))= \overline {S(x)}$.

Assume that $b$ is another idempotent of $M$ such that $\overline{S(x)}=S(b)$. First, let $a:=a_0(x)\le b$. Then $b = a\vee \lambda_b(a)$, and $\lambda_b(a)$ is an idempotent of $M$, which entails $s(\lambda_b(a))=0$ for each state-morphism $s$ of $M$. The semisimplicity of $M$ yields $\lambda_b(a)=0$ and $a=b$. In general, we have $S(a)=S(a)\cup S(b)=S(a\vee b)$, i.e. $a=a\vee b = b$.

Let $a = a_0(x)$. Then $a = x\oplus \lambda_a(x)$. By (\ref{eq:LSa}), there is an idempotent $c_0=\bigwedge\{c\in \mathcal I(M) \mid  \lambda_a(x)\le c\}$. Then for the idempotent $\lambda_a(c_0)$ we have $\lambda_a(c_0)=\bigvee \{b\in \mathcal I(M) \mid b \le x\}$. Clearly, $n.\lambda_a(x)\le c_0$, so that $ \lambda_a(c_0)\le x^n$ for each $n\ge 1$, and whence $\lambda_a(c)\le y_0:=\bigwedge_n x^n$. Using \cite[Prop 1.22]{georgescu}, we have $y_0\odot y_0=y_0$ so that $y_0$ is an idempotent of $M$ with $y_0\le x$. Therefore, $y_0\le \lambda_a(c_0)$.

(1) Now let $x\le y$. There is a unique idempotent $b$ of $M$ such that $\overline{S(y)}=S(b)$. Then $S(b)=\overline{S(y)}\supseteq \overline{S(x)}= S(a)$ and $S(b\vee a)=S(b)\cup S(a)= S(b)$, i.e. $a\vee b = b$ and $a \le b$.

(2) By the above parts, the idempotents $a$ and $a_n$ with $\overline{S(x)}=S(a)$ and $\overline{S(x_n)}=S(a_n)$ are determined unambiguously, where $x =\bigvee_n x_n$. Put $a_0=\bigvee_na_n$. Then $a_0\ge a_n\ge x_n$, $a_0\ge x$, so that $a_0\ge a_0(x):=a$.  Now let $b$ be any idempotent of $M$ with $b\ge x$. Then $b\ge x_n$ for each $n\ge 1$, so that $b\ge a_n$ for each $n\ge 1$, and $b\ge a_0$ which by (\ref{eq:LSa}) yields $a_0=a_0(x)=a$.
\end{proof}

The elements $a_0(x)$ and $b_0(x)$ defined in the latter theorem are said to be the {\it least upper idempotent} of $x$ and the {\it greatest lower idempotent} of $x$, respectively, and for them, we have
$$
b_0(x)\le x\le a_0(x).
$$

\begin{prop}\label{pr:LS1}
Let $M$ be a $\sigma$-complete $EMV$-algebra and let $\mathcal B(M)$ be the system of all compact and open subsets of $\mathrm{MaxI}(M)$. Then $\mathcal B(M)=\{M(a)\mid a \in \mathcal I(M)\}$. Moreover, for $a,b \in \mathcal I(M)$, we have $M(a)=M(b)$ if and only if $a=b$, and the closure of the union of countably many elements of $\mathcal B(M)$ belongs to $\mathcal B(M)$.

In particular, for every sequence $\{a_n\}$ of elements of $\mathcal I(M)$,
\begin{equation}\label{eq:LS1}
\overline{\bigcup_n M(a_n)}=M(a),
\end{equation}
where $a = \bigvee_n a_n$ and $a \in \mathcal I(M)$. Similarly, $\overline{\bigcup_n S(x_n)}=S(a)$.
\end{prop}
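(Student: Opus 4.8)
The plan is to transport everything to the state-morphism side through the homeomorphism $\theta\colon \mathcal{SM}(M)\to \mathrm{MaxI}(M)$, $\theta(s)=\Ker(s)$, of Theorem \ref{th:Bel9}, under which $\theta(S(x))=M(x)$ for every $x\in M$ (since $x\notin \Ker(s)$ iff $s(x)>0$); thus $S$-statements and $M$-statements are interchangeable. First I would prove $\mathcal B(M)=\{M(a)\mid a\in\mI(M)\}$. The inclusion $\supseteq$ is Theorem \ref{th:Bel11}, where each $M(a)$ with $a$ idempotent is shown to be compact and clopen. For $\subseteq$, take a compact open $U$. Since $\{M(x)\mid x\in M\}$ is a base of $\mathcal T_M$ (Proposition \ref{pr:Bel6}) and $U$ is compact, finitely many basic sets cover it, and by Proposition \ref{pr:Bel6}(iii) their union is $M(x_0)$ with $x_0=x_1\vee\cdots\vee x_n$; hence $U=M(x_0)$. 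As $\mathrm{MaxI}(M)$ is Hausdorff, $U$ is also closed, so $S(x_0)=\theta^{-1}(U)$ is clopen in $\mathcal{SM}(M)$, whence $\overline{S(x_0)}=S(x_0)$. Theorem \ref{th:LS2} gives $\overline{S(x_0)}=S(a_0(x_0))$ with $a_0(x_0)\in\mI(M)$, so $S(x_0)=S(a_0(x_0))$ and therefore $U=M(x_0)=M(a_0(x_0))$, an element of $\{M(a)\mid a\in\mI(M)\}$.

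Next I would settle injectivity: for $a,b\in\mI(M)$, $M(a)=M(b)$ iff $a=b$, where only the forward direction needs work. Passing to $S$-sets, $S(a)=S(b)$ gives $S(a\vee b)=S(a)\cup S(b)=S(a)$, so it suffices to treat two idempotents $a\le c:=a\vee b$ with $S(a)=S(c)$. Here $c=a\vee\lambda_c(a)$ with $\lambda_c(a)$ idempotent, and for every state-morphism $s$ one checks $s(\lambda_c(a))=0$: if $s(c)=1$ then $s\in S(c)=S(a)$ forces $s(a)=1$, whence $s(\lambda_c(a))=s(c)-s(a)=0$ using $a\odot\lambda_c(a)=0$; if $s(c)=0$ then $s(\lambda_c(a))\le s(c)=0$. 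Since $M$ is semisimple by Proposition \ref{pr:Bel3}, an element annihilated by all state-morphisms lies in $\Rad(M)=\{0\}$, so $\lambda_c(a)=0$ and $a=c$; symmetrically $b=c$, giving $a=b$.

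Finally I would prove the closure statement, which also yields \eqref{eq:LS1} and its $S$-analogue. Given idempotents $a_n$, the join $a=\bigvee_n a_n$ exists and is idempotent by Proposition \ref{pr:Bel3} together with $\sigma$-completeness. Since $a_n\le a$, we have $\bigcup_n M(a_n)\subseteq M(a)$, and $M(a)$ is closed, so $\overline{\bigcup_n M(a_n)}\subseteq M(a)$. For the reverse inclusion I invoke Proposition \ref{pr:Bel15} with $x=a=\bigvee_n a_n$: the set $M(a)\setminus\bigcup_n M(a_n)$ is nowhere dense. Then $M(a)\setminus\overline{\bigcup_n M(a_n)}$ is open and contained in this nowhere dense set, hence empty, forcing $M(a)\subseteq\overline{\bigcup_n M(a_n)}$; equality follows, and by the first part $M(a)\in\mathcal B(M)$. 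Since every member of $\mathcal B(M)$ is some $M(a_n)$ with $a_n\in\mI(M)$, this proves the general closure claim. For the $S$-version with arbitrary $x_n$, put $a_n=a_0(x_n)$, so $\overline{S(x_n)}=S(a_n)$ by Theorem \ref{th:LS2}, and $a=\bigvee_n a_n$. Then $\bigcup_n S(x_n)\subseteq\bigcup_n S(a_n)\subseteq\overline{\bigcup_n S(x_n)}$, and taking closures sandwiches $\overline{\bigcup_n S(x_n)}=\overline{\bigcup_n S(a_n)}=S(a)$. The main obstacle is the first part: recognizing an abstract compact open set as $M(a)$ for an idempotent $a$, which is exactly where $\sigma$-completeness enters, through the least upper idempotent $a_0(x_0)$ of Theorem \ref{th:LS2} together with the clopen-ness forced by Hausdorffness.
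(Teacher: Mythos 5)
Your proof is correct and follows essentially the same route as the paper: Theorem \ref{th:Bel11} for the inclusion $\{M(a)\mid a\in\mI(M)\}\subseteq\mathcal B(M)$, compactness plus the base $\{M(x)\}$ and the identity $\overline{S(x)}=S(a_0(x))$ from Theorem \ref{th:LS2} for the converse, and Proposition \ref{pr:Bel15} together with the nowhere-density argument for the closure formula. The only (harmless) deviations are that you re-derive the injectivity $M(a)=M(b)\Rightarrow a=b$ from semisimplicity rather than citing Theorem \ref{th:LS2}, and you replace the appeal to Lemma \ref{le:LS5} by the elementary observation that an open subset of a nowhere dense set is empty.
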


\begin{proof}
Due to Theorem \ref{th:Bel11}, every $M(a)$ is open and compact for each idempotent $a \in \mathcal I(M)$. Therefore, each $M(a)$ belongs to $\mathcal B(M)$.

If $K$ is a compact and open subset of $\mathrm{MaxI}(M)$, we assert there is an element $x_0 \in M$ such that $K=O(x_0)$. Indeed, we have $K=C(J)=O(I)$ for some ideals $J$ and $I$ of $M$. Since $I=\bigvee\{I_x\mid x \in I\}$, where $I_x$ is the ideal of $M$ generated by an element $x$, then $O(I)=\bigcup \{O(I_x)\mid x \in I\}$, and the compactness of $K$ provides us with finitely many elements $x_1,\ldots,x_n$ of $I$ such that if $x_0 = x_1\vee \cdots \vee x_n\in I$, then $K=O(I)=\bigcup_{i=1}^n O(I_{x_i})= O(I_{x_0})=M(x_0)$. Define $a_0(x_0)$ by (\ref{eq:a(x)}). Then by Theorem \ref{th:LS2}, $K=M(x_0)=\overline{M(x_0)}= M(a_0(x_0))$. From the same theorem, we conclude that for two idempotents $a,b \in \mathcal I(M)$, $M(a)=M(b)$ implies $a=b$.

Now let $\{K_n\}$ be a sequence of elements from $\mathcal B(M)$. For each $K_n$, there is a unique idempotent $a_n \in \mathcal I(M)$ such that $K_n=M(a_n)$. Put $a=\bigvee_n a_n$; then $a\in \mathcal I(M)$. By Proposition \ref{pr:Bel15}, $M(a)\setminus \bigcup_n M(a_n)$ is nowhere dense. Since $M(a)\setminus \overline{\bigcup_n M(a_n)} \subseteq M(a)\setminus \bigcup_n M(a_n)$, the set $M(a)\setminus \overline{\bigcup_n M(a_n)}$ is open and nowhere dense which by Lemma \ref{le:LS5} yields $M(a)\setminus \overline{\bigcup_n M(a_n)}=\emptyset$, i.e. $M(a)= \overline{\bigcup_n M(a_n)}=\overline{\bigcup_n K_n}$.

The second equality $\overline{\bigcup_n S(x_n)}=S(a)$ follows from Theorem \ref{th:Bel11}.
\end{proof}

An important notion of this section is an $EMV$-tribe of fuzzy sets which is a $\sigma$-complete $EMV$-algebra where all operations are defined by points.

\begin{defn}\label{de:tribe}
A system $\mathcal T\subseteq [0,1]^\Omega$ of fuzzy sets of a set $\Omega\ne \emptyset$ is said to be an $EMV$-{\it tribe} if
\vspace{1mm}
\begin{enumerate}[nolistsep]
\item[(i)] $0_\Omega \in \mathcal T$ where $0_\Omega(\omega)=0$ for each $\omega \in \Omega$;
\item[(ii)] if $a \in \mathcal T$ is a characteristic function, then (a) $a-f \in \mathcal T$ for each $f\in \mathcal T$ with $f(\omega)\le a(\omega)$ for each $\omega \in \Omega$, (b) if $\{f_n\}$ is a sequence of functions from $\mathcal T$ with $f_n(\omega)\le a(\omega)$ for each $\omega \in \Omega$ and each $n\ge 1$, then $\bigoplus_n f_n\in \mathcal T$, where $\bigoplus_nf_n(\omega) = \min\{\sum_n f_n(\omega),a(\omega)\}$, $\omega \in \Omega$, and $a$ is a characteristic function from $\mathcal T$;
\item[(iii)] for each $f \in \mathcal T$, there is a characteristic function $a \in \mathcal T$ such that $f(\omega)\le a(\omega)$ for each $\omega \in \Omega$;
\item[(iv)] given $\omega \in \Omega$, there is $f \in \mathcal T$ such that $f(\omega)=1$.
\end{enumerate}
\end{defn}

\begin{prop}\label{pr:LS4}
Every $EMV$-tribe of fuzzy sets is a Dedekind $\sigma$-complete $EMV$-clan where all operations are defined by points. If $\{g_n\}$ is a sequence from $\mathcal T$, then $g=\bigwedge_n g_n$ exists in $\mathcal T$ and $g(\omega)=\inf_n g_n(\omega)$, $\omega \in \Omega$.

If for a sequence $\{f_n\}$ from $\mathcal T$, $f=\bigvee_n f_n$ exists in $\mathcal T$, then $f(\omega)=\sup_n f_n(\omega)$, $\omega \in \Omega$.
An $EMV$-tribe is $\sigma$-complete if and only if, for each sequence $\{f_n\}$ of elements of $\mathcal T$, there is a characteristic function $a\in \mathcal T$ such that $f_n(\omega)\le a(\omega)$, $\omega \in \Omega$.
\end{prop}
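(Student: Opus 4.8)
The plan is to recognize an $EMV$-tribe as a special $EMV$-clan and then to obtain the $\sigma$-completeness statements by the classical telescoping argument. First I would verify that every $EMV$-tribe $\mathcal T$ is an $EMV$-clan: the only clan axiom not literally present is closure under the binary sum $f\oplus g$ of two functions dominated by a characteristic function $a$, and this is the instance of the tribe axiom (ii)(b) of Definition \ref{de:tribe} applied to the sequence $f,g,0_\Omega,0_\Omega,\dots$, for which $\min\{f(\omega)+g(\omega)+0+\cdots,a(\omega)\}=(f\oplus g)(\omega)$. Since all remaining axioms coincide, $\mathcal T$ is an $EMV$-clan, hence by \cite[Thm 4.11]{DvZa} a semisimple $EMV$-algebra in which $\vee,\wedge,\oplus$ and every $\lambda_a$ are computed by points; in particular its idempotents are exactly its characteristic functions and its lattice order is the pointwise order. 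It then remains to establish Dedekind $\sigma$-completeness together with the explicit pointwise formulas.

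For suprema I would take a sequence $\{f_n\}\subseteq\mathcal T$ bounded above by some $f_0\in\mathcal T$, hence by axiom (iii) by a characteristic function $a\in\mathcal T$, so that $f_n\le a$ for all $n$. Passing to $g_n:=f_1\vee\cdots\vee f_n\in\mathcal T$ reduces to a nondecreasing sequence with the same upper bounds and the same pointwise supremum, still dominated by $a$. The decisive step is to telescope: set $h_1:=g_1$ and $h_{n+1}:=\lambda_a(\lambda_a(g_{n+1})\oplus g_n)$, which lies in $[0,a]\cap\mathcal T$ by the clan axioms and, by the pointwise computation in the $MV$-algebra $[0,a]$, equals $g_{n+1}(\omega)-g_n(\omega)\ge 0$. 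Then $\sum_{k=1}^{n}h_k(\omega)=g_n(\omega)$, whence $\sum_{k=1}^{\infty}h_k(\omega)=\sup_n g_n(\omega)\le a(\omega)$. Applying the tribe axiom (ii)(b) to $\{h_k\}$ gives $f:=\bigoplus_k h_k\in\mathcal T$ with $f(\omega)=\min\{\sum_k h_k(\omega),a(\omega)\}=\sup_n g_n(\omega)=\sup_n f_n(\omega)$, the truncation being vacuous. Because the order is pointwise, $f$ dominates every $f_n$ and is dominated by every upper bound, so $f=\bigvee_n f_n$. This proves both that $\mathcal T$ is Dedekind $\sigma$-complete and that the supremum of a bounded sequence is its pointwise supremum. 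Moreover, if $\bigvee_n f_n$ merely happens to exist for some sequence, then by (iii) it lies under a characteristic function, so the sequence is bounded, and the uniqueness of least upper bounds forces $(\bigvee_n f_n)(\omega)=\sup_n f_n(\omega)$.

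For infima I would dualize by complementation. Given any $\{g_n\}\subseteq\mathcal T$, replace it by $g'_n:=g_1\wedge\cdots\wedge g_n\in\mathcal T$, a nonincreasing sequence with the same lower bounds and the same pointwise infimum, and choose by (iii) a characteristic function $a\ge g_1\ge g'_n$. Then $\{\lambda_a(g'_n)\}$ is a nondecreasing sequence in $[0,a]\cap\mathcal T$ bounded by $a$, so by the previous paragraph $h:=\bigvee_n\lambda_a(g'_n)\in\mathcal T$ with $h(\omega)=\sup_n(a(\omega)-g'_n(\omega))=a(\omega)-\inf_n g_n(\omega)$. Consequently $\lambda_a(h)\in\mathcal T$ satisfies $\lambda_a(h)(\omega)=\inf_n g_n(\omega)$, and the pointwise order identifies it with $\bigwedge_n g_n$; thus the infimum of every sequence exists and is computed by points.

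Finally, for the $\sigma$-completeness criterion: if $\mathcal T$ is $\sigma$-complete, then for any $\{f_n\}$ the supremum $x:=\bigvee_n f_n$ exists and, by (iii), lies under a characteristic function $a\in\mathcal T$, whence $f_n\le x\le a$ for every $n$. Conversely, if each sequence admits a common characteristic upper bound, then every sequence is bounded above and the supremum construction of the second paragraph produces $\bigvee_n f_n\in\mathcal T$, so $\mathcal T$ is $\sigma$-complete. The main obstacle is the supremum step: one must check that the telescoped differences remain inside $[0,a]\cap\mathcal T$ and that the countable sum furnished by axiom (ii)(b) reproduces the untruncated pointwise supremum; the rest is a verbatim comparison of axioms or a routine dualization.
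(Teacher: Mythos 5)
Your proposal is correct and follows essentially the same route as the paper: reduce to a nondecreasing sequence, telescope into differences $g_{n+1}\ominus g_n$ lying in $[0,a]\cap\mathcal T$, apply the countable $\bigoplus$ axiom to recover the pointwise supremum, dualize via $\lambda_a$ for infima, and read off the $\sigma$-completeness criterion from axiom (iii). The only cosmetic difference is that you derive closure under binary $\oplus$ by padding a sequence with zeros where the paper cites its clan result and separately checks that $\bigoplus$ is independent of the dominating characteristic function, a point your "truncation is vacuous" observation makes unnecessary for the claims at hand.
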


\begin{proof}
By \cite[Prop 4.10]{DvZa}, we see that $\mathcal T$ is an $EMV$-clan of fuzzy sets of $\Omega$ which is closed under $\vee$ and $\wedge$, defined by points.  We have to show that the operation $\bigoplus$ is correctly defined. Let $\{f_n\}$ be any sequence for which there are two characteristic functions $a,b \in \mathcal T$ such that $f_n(\omega)\le a(\omega),b(\omega)$, $\omega \in \Omega$ and $n\ge 1$. There is another characteristic function $c\in \mathcal T$ with $a(\omega),b(\omega)\le c(\omega)$, $\omega \in \Omega$. We denote $(\bigoplus^a_n f_n)(\omega):=\min\{\sum_n f_n(\omega),a(\omega)\}$ for each $\omega\in \Omega$. In the same way we define $\bigoplus^b_n f_n$ and $\bigoplus^c_n f_n $. Then

\[(\bigoplus^a_n f_n)(\omega)=\begin{cases}
\sum_n f_n(\omega) & \text{ if } \sum_n f_n(\omega)\le a(\omega)\\
a(\omega) & \text{ if }  \sum_n f_n(\omega) > a(\omega),
\end{cases}\quad \omega \in \Omega,
\]
and
\[(\bigoplus^c_n f_n)(\omega)=\begin{cases}
\sum_n f_n(\omega) & \text{ if } \sum_n f_n(\omega)\le c(\omega)\\
c(\omega) & \text{ if }  \sum_n f_n(\omega) > c(\omega),
\end{cases}\quad \omega \in \Omega.
\]

If $a(\omega)=0$, then $f_n(\omega)=0$ for each $n$ and $(\bigoplus^a_n f_n)(\omega)= 0= (\bigoplus^c_n f_n)(\omega)$. If $a(\omega)=1$, then $c(\omega)=1$ and $(\bigoplus^a_n f_n)(\omega)=  (\bigoplus^c_n f_n)(\omega)$. In the same way we have $(\bigoplus^b_n f_n)=(\bigoplus^c_n f_n)$, so that $(\bigoplus^a_n f_n)=(\bigoplus^b_n f_n)$, and $\bigoplus_n f_n$ is well defined.

Choose an arbitrary sequence $\{f_n\}$ from $\mathcal T$ which is dominated by some characteristic function $a\in \mathcal T$. Without loss of generality we can assume that $f_n(\omega)\le f_{n+1}(\omega)$, $\omega \in \Omega$, $n\ge 1$. We set $h_1=f_1$ and $h_n=f_n-f_{n+1}$ for $n\ge 1$. Then each $h_n$ belongs to $\mathcal T$ and it is dominated by $a$. Therefore, $\bigoplus_n h_n \in \mathcal T$ and $(\bigoplus_n h_n)(\omega) = \sum_n h_n(\omega)= \sup_nf_n(\omega)$, which proves that $\mathcal T$ is Dedekind $\sigma$-complete. Consequently, $\mathcal T$ is $\sigma$-complete iff for each sequence $\{f_n\}$ we can find a characteristic function $a \in \mathcal T$ which dominates each $f_n$.

Now let $\{g_n\}$ be any sequence from $\mathcal T$. Since $\mathcal T$ is a lattice where $(f\wedge g)(\omega)=\min\{f(\omega),g(\omega)\}$, $\omega \in \Omega$, without loss of generality, we can assume that $g_{n+1}\le g_n$ for each $n\ge 1$. Then there is a characteristic function $a\in \mathcal T$ such that $g_n(\omega) \le a(\omega)$, $\omega \in \Omega$, $n \ge 1$, and $a-g_n\in \mathcal T$, $a-g_n \le a-g_{n+1}$. Whence, $(\bigvee_n (a-g_n))(\omega)= \sup_n(a-g_n)(\omega)$ for each $\omega \in \Omega$. Consequently $(\bigwedge_n g_n)(\omega)= a(\omega)-(\bigvee_n (a-g_n))(\omega)= a(\omega)-\sup_n(a(\omega)-g_n(\omega))= \inf_n g_n(\omega)$, $\omega \in \Omega$.
\end{proof}

We note that a {\it tribe} is a system $\mathcal T\subseteq [0,1]^\Omega$ of fuzzy sets on $\Omega\ne \emptyset$ such that (i) $1_\Omega \in \mathcal T$, (ii) if $f \in \mathcal T$, then $1-f\in \mathcal T$, and (iii) for any sequence $\{f_n\}$ of elements of $\mathcal T$, the function $\bigoplus_n f_n$ belongs to $\mathcal T$, where $(\bigoplus_nf_n)(\omega)=\min\{\sum_n f_n(\omega),1\}$, $\omega \in \Omega$. Then the notion of an $EMV$-tribe is a generalization of the notion of a tribe because an $EMV$-tribe $\mathcal T$ is a tribe iff $1_\Omega \in \mathcal T$. We note that in \cite{Mun4,Dvu5}, there was proved that every $\sigma$-complete $MV$-algebra is a $\sigma$-homomorphic image of some tribe of fuzzy sets.

We say that an $EMV$-homomorphism $h: M_1\to M_2$ is a $\sigma$-{\it homomorphism}, where $M_1$ and $M_2$ are $EMV$-algebras, if for any sequence $\{x_n\}$ of elements from $M_1$ for which $x=\bigvee_n x_n$ is defined in $M_1$, then $\bigvee_n h(x_n)$ exists in $M_2$ and $h(x)=\bigvee_n h(x_n)$.

Let $f$ be a real-valued function on $\Omega \ne \emptyset$. We define
$$
N(f) :=\{\omega \in \Omega \mid |f(\omega)| > 0\}, \quad N^+(f)=\{\omega \in \Omega \mid f(\omega)>0\}, \quad N^-(f)=\{\omega \in \Omega \mid f(\omega)<0\}.
$$
Then $N(f)=N^+(f)\cup N^-(f)$.

Suppose that $\mathcal T$ is a system of fuzzy sets on $\Omega$, containing $0_\Omega$, such that, for each $f\in \mathcal T$, there is a characteristic function $a\in \mathcal T$ with $f(\omega)\le a(\omega)$, $\omega \in \Omega$. If $f,g \le a$ for some characteristic function from $\mathcal T$, we can define $(f\oplus g)(\omega) =\min\{f(\omega)+g(\omega),a(\omega)\}$, $(f\odot g)(\omega) = \max\{f(\omega)+g(\omega)-a(\omega),0\}$, and $(f\ast g)(\omega) = \max\{f(\omega)-g(\omega),0\}$ for each $\omega \in \Omega$, and these operations do not depend on $a$.

Then for all $f,g \in \mathcal T$ we have

\begin{itemize}[nolistsep]
\item[(i)]    $N(f\oplus g)= N(f) \cup N(g)$;
\item[(ii)]   $N(f \ast g) =\{\omega \in \Omega\mid f(\omega) > g(\omega)\}$;
\item[(iii)]  $(f \ast g) \oplus (g \ast f) = (f \ast g) + (g \ast f)$;
\item[(iv)]   $N((f \ast g) \oplus (g \ast f)) = N(f-g)$;
\item[(v)]    $N(f) \subseteq N(g)$ if $f \le g$;
\item[(vi)] $N(f\ast g)=\{\omega\in \Omega \mid f(\omega)>g(\omega)\}$;
\item[(vii)] $N(f\odot g)= \{\omega\in \Omega \mid f(\omega)+g(\omega)>1\}$.
\end{itemize}
\vspace{1mm}

Now we formulate the Loomis--Sikorski theorem for $\sigma$-complete $EMV$-algebras.

\begin{thm}\label{th:LS6}{\bf (Loomis--Sikorski Theorem)}
Let $M$ be a $\sigma$-complete $EMV$-algebra. Then there are an $EMV$-tribe $\mathcal T$ of fuzzy sets on some $\Omega \ne \emptyset$ and a surjective $\sigma$-homomorphism $h$ of $EMV$-algebras  from $\mathcal T$ onto $M$.
\end{thm}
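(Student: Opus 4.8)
The plan is to realise $M$ concretely on its state-morphism space. Put $\Omega:=\mathcal{SM}(M)$, which by Proposition \ref{pr:Bel3} is the state space of a \emph{semisimple} $EMV$-algebra and which, by Theorem \ref{th:Bel11}, is a locally compact Hausdorff space in which every $S(a)$ with $a\in\mI(M)$ is compact and clopen. For $x\in M$ set $\hat x(s):=s(x)$; each $\hat x$ is continuous, and by semisimplicity the map $x\mapsto\hat x$ is injective. Call a subset of $\Omega$ \emph{meager} if it is a countable union of nowhere dense sets, and define
$$\mathcal T:=\{f\in[0,1]^\Omega\mid \exists\, x\in M,\ \exists\ \text{meager }P\s\Omega\ \text{ with } f(s)=\hat x(s)\ \forall\, s\notin P\},$$
together with $h(f):=x$ whenever $f=\hat x$ off a meager set. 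By Lemma \ref{le:LS5} no non-empty open set is meager, so the complement of a meager set is dense; hence if $f=\hat x=\hat y$ off meager sets, then $\hat x,\hat y$ agree on a dense set and, being continuous, everywhere, so $x=y$ and $h$ is well defined. Throughout I identify $\mathcal{SM}(M)$ with $\mathrm{MaxI}(M)$ via Theorem \ref{th:Bel9}, so that $S(x)$ corresponds to $M(x)$.

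Checking that $h$ respects the finitary structure is routine, once one notes two facts. First, the characteristic functions lying in $\mathcal T$ are exactly the $\hat a=\chi_{S(a)}$ with $a\in\mI(M)$: a $\{0,1\}$-valued $f\in\mathcal T$ agrees with some continuous $\hat x$ on a dense set, forcing $\hat x\in\{0,1\}$ everywhere and hence $x\in\mI(M)$ by semisimplicity (cf. Lemma \ref{le:Bel13}(iv)). Second, for $x,y\le e\in\mI(M)$ one has $\widehat{x\oplus y}=\hat x\oplus\hat y$, $\widehat{x\odot y}=\hat x\odot\hat y$ and $\widehat{\lam_e(x)}=\chi_{S(e)}-\hat x$ \emph{pointwise}, since each state-morphism is an $MV$-homomorphism on $[0,e]$. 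As the meager sets form a $\sigma$-ideal, these identities descend to classes and make $h$ a homomorphism preserving $0,\vee,\wedge,\oplus,\lam_e$; surjectivity is immediate from $h(\hat x)=x$. The substantial point, needed both to see that $\mathcal T$ is closed under the countable operation $\bigoplus$ and to see that $h$ is a $\sigma$-homomorphism, is the following.

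\emph{Key Lemma.} If $x=\bigvee_n x_n$ in $M$, then $D:=\{s\in\Omega\mid s(x)>\sup_n s(x_n)\}$ is meager; equivalently $\sup_n\hat{x_n}=\hat x$ off a meager set. This is the heart, and the main obstacle, of the whole proof. Replacing $x_n$ by $x_1\vee\cdots\vee x_n$ I may assume $x_n\uparrow x$, and I fix the idempotent $a:=a_0(x)$ of Theorem \ref{th:LS2}; put $d_n:=x\odot\lam_a(x_n)$, so $d_n$ decreases and $s(d_n)=s(x)-s(x_n)$ on $S(a)$. I first show $\bigwedge_n d_n=0$: writing $r:=\bigwedge_n d_n$, the element $y:=x\odot\lam_a(r)$ satisfies $y\le x$, and since $r\le d_n$ yields $y\ge x\odot\lam_a(d_n)=x\wedge x_n=x_n$ for each $n$ (the identity holding in the $MV$-algebra $[0,a]$), $y$ is an upper bound of $\{x_n\}$; as $x$ is the least such, $y=x$, whence $s(r)=s(x)-s(y)=0$ for all $s$ and $r=0$ by semisimplicity. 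Since $\oplus$ distributes over countable infima (\cite[Prop 1.22]{georgescu}), $\bigwedge_n (k.d_n)=0$, so by Lemma \ref{le:Bel2} $\bigvee_n\lam_a(k.d_n)=\lam_a(0)=a$ for every $k\ge1$. Applying Proposition \ref{pr:Bel15} to $a=\bigvee_n\lam_a(k.d_n)$ shows that $S(a)\setminus\bigcup_n S(\lam_a(k.d_n))$ is nowhere dense; but $S(\lam_a(k.d_n))=\{s\in S(a)\mid s(d_n)<1/k\}$, so this set equals $\{s\in\Omega\mid\inf_n s(d_n)\ge 1/k\}$ (off $S(a)$ the infimum is $0$). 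Letting $k\to\infty$ exhibits $D=\bigcup_k\{s\mid\inf_n s(d_n)\ge1/k\}$ as a countable union of nowhere dense sets, so $D$ is meager.

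With the Key Lemma the proof closes quickly. Given $\{f_n\}\s\mathcal T$ bounded by a characteristic function, write $f_n=\widehat{x_n}$ off meager sets with $x_n\le e\in\mI(M)$; by $\sigma$-completeness $x:=\bigvee_n x_n$ exists in $M$, and the Key Lemma together with the pointwise description of suprema in Proposition \ref{pr:LS4} shows that the pointwise supremum $\sup_n f_n$—equivalently the pointwise truncated sum $\bigoplus_n$ of the increments—coincides with $\hat x$ off a meager set. Hence $\mathcal T$ is closed under $\bigoplus$, so it is an $EMV$-tribe, and $h(\bigvee_n f_n)=x=\bigvee_n h(f_n)$, making $h$ a surjective $\sigma$-homomorphism of $\mathcal T$ onto $M$. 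The one genuine difficulty is the Key Lemma: bridging the order-theoretic supremum in $M$ and the pointwise supremum of representatives, which state-morphisms fail to preserve. All the topological preparation—local compactness with clopen compact $S(a)$ (Theorem \ref{th:Bel11}), the Baire-type Lemma \ref{le:LS5}, and the nowhere-density Proposition \ref{pr:Bel15}—is precisely what forces this discrepancy to be meager.
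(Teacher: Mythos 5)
Your construction is the paper's: the same $\Omega=\mathcal{SM}(M)$, essentially the same $\mathcal T$ (you drop the explicit domination clause $f\le\hat a$ from its definition, but you recover a dominating characteristic function when verifying the tribe axioms, so nothing is lost), the same $h$, and the same well-definedness and finitary checks as the paper's Claim 1. The genuine divergence is in the proof of the central meagerness statement (your Key Lemma, the paper's Claim 2). The paper splits $N(\hat x-b_0)$ into the nowhere dense piece $S(x)\setminus\bigcup_nS(x_n)$ and a piece inside $\bigcup_nS(x_n)$, and disposes of the latter by restricting to the $MV$-algebra $M_a=[0,a]$, importing from the proof of the $MV$-algebraic Loomis--Sikorski theorem \cite[Thm 4.1]{Dvu5} that the analogous discrepancy set is meager in $\mathcal{SM}(M_a)$, and transferring this back to $\mathcal{SM}(M)$ via ideals and restrictions of state-morphisms. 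You instead argue entirely inside $M$: with $d_n=x\odot\lam_a(x_n)$ you show $\bigwedge_nd_n=0$ (the identity $x\odot\lam_a(d_n)=x\wedge x_n=x_n$ makes $x\odot\lam_a(r)$ an upper bound of $\{x_n\}$ below $x$, hence equal to $x$; note that the case $s(x)=0$ in your conclusion $s(r)=0$ is covered by $r\le d_n\le x$ rather than by subtraction), deduce $\bigwedge_n(k.d_n)=0$ from distributivity of $\oplus$ over decreasing infima, pass to $\bigvee_n\lam_a(k.d_n)=a$ by Lemma \ref{le:Bel2}(iii), and apply Proposition \ref{pr:Bel15} for each $k$ to exhibit $D$ as $\bigcup_k\{s\mid\inf_ns(d_n)\ge 1/k\}$, a countable union of nowhere dense sets. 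I checked these steps and they are correct, including the identification $S(a)\setminus\bigcup_nS(\lam_a(k.d_n))=\{s\mid\inf_ns(d_n)\ge1/k\}$. What your route buys is self-containment: it never leaves the $EMV$-algebra $M$ and does not lean on the cited $MV$-algebra proof, at the modest price of the extra algebra with the elements $k.d_n$; the paper's route is shorter on the page but hides the analytic content in \cite{Dvu5} and needs a delicate transfer argument. One small imprecision worth fixing: the characteristic functions in $\mathcal T$ are not literally the $\chi_{S(a)}$ but the $\{0,1\}$-valued functions agreeing with some $\chi_{S(a)}$ off a meager set; the fact you actually use --- that $h$ sends characteristic functions to idempotents --- is correct.
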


\begin{proof}
If $M=\{0\}$, the statement is trivial. So let $M\ne \{0\}$.

By Proposition \ref{pr:Bel3}, $M$ is a semisimple $EMV$-algebra, and by the proof of \cite[Thm 4.11]{DvZa}, $M$ is isomorphic to $\widehat M=\{\hat x\mid x \in M\}$, where $\hat x: \mathcal{SM}(M)\to [0,1]$ is defined by $\hat x(s)=s(x)$, $s \in \mathcal{SM}(M)$.

Let $\mathcal T$ be the system of fuzzy sets $f$ on $\Omega=\mathcal{SM}(M)$ such that (i) for some $x \in M$, $N(f - \hat x)$ is a meager set (i.e. it is a countable union of nowhere dense subsets) in the weak topology of state-morphisms, and we write $f \sim x$, and (ii) there is $a \in \mathcal I(M)$ such that $f\le \hat a$. It is clear that $\mathcal T$ contains $\widehat M$.

If $x_1$ and $x_2$ are two elements of $M$ such that $N(f-\hat x_i)$ is a meager set for $i=1,2$, then
$$ N(\hat x_1 - \hat x_2) \subseteq N(\hat x_1 - f) \cup N(f - \hat x_2)$$
is a meager set. By Lemma \ref{le:LS5}, we conclude that $N(\hat x_1 - f) \cup N(f - \hat x_2)=\emptyset$ from which we get $\hat x_1 = \hat x_2$, that is $x_1=x_2$. Therefore, if $f \sim x_1$ and $f\sim x_2$, then $x_1=x_2$.

\vspace{2mm}
\noindent {\it Claim 1.
The set $\mathcal T$ is an $EMV$-clan.}
\vspace{1mm}

Let $f,g,h\in \mathcal T$ and let $N(g-h)$ be a meager set. We assert $N_0:=N((f\oplus g)\ast (f\oplus h))$ is a meager set. Set $N_1=\{s \mid \min\{f(s)+g(s),1\}> \min\{f(s)+h(s),1\}\}$ and check
\begin{eqnarray*}
N_1&=&  \big(N_1\cap \{s\mid g(s)=h(s)\}\big) \cup \big(N_1 \cap \{s\mid g(s)>h(s)\}\big) \cup \big(N_1 \cap \{s \mid g(s)<h(s)\}\big)\\
&=& \big(N_1 \cap \{s\mid g(s)>h(s)\}\big) \cup \big(N_1 \cap \{s \mid g(s)<h(s)\}\big)\subseteq N_1\cap N(g-h),
\end{eqnarray*}
which shows that $N_0$ is a meager set. Similarly, $N((f\oplus h)\ast (f\oplus g))$ is a meager set.

In a similar way, if $N_3:= N((f\vee g)\ast (f\vee h))=\{s \mid f(s)\vee g(s) > f(s)\vee h(s)\}$, then
\begin{eqnarray*}
N_3&=& \big(N_3\cap \{s\mid g(s)=h(s)\}\big) \cup \big(N_3 \cap \{s\mid g(s)>h(s)\}\big) \cup \big(N_3 \cap \{s \mid g(s)<h(s)\}\big)\\
&=& \big(N_3 \cap \{s\mid g(s)>h(s)\}\big) \cup \big(N_3 \cap \{s \mid g(s)<h(s)\}\big) \subseteq N_3\cap N(g-h)\subseteq N(g-h),
\end{eqnarray*}
which establishes $N_3$ is a meager set. In the same way, $N((f\vee h)-(f\vee g))$ is meager, consequently, $N((f\vee g)- (f\vee h))$ is meager.

Therefore, if $f,g \in \mathcal T$ and $f\sim x$ and $g\sim y$ for unique $x,y \in M$, there is an idempotent $a\in \mathcal I(M)$ such that $x,y\le a$ and $f,g \le \hat a$. This implies $N((f\oplus g)\ast (\hat x \oplus \hat y)) \subseteq N((f\oplus g)\ast (f\oplus \hat y))\cup N((f\oplus \hat y) \ast (\hat x\oplus \hat y))$ is a meager set. Similarly  $N((\hat x \oplus \hat y)\ast (f\oplus g))$ is also meager. Therefore, $f\oplus g \sim x\oplus y$ which proves $\mathcal T$ is an $EMV$-clan and $\mathcal T$ is closed also under $\vee$ and $\wedge$ with pointwise ordering. In the same way, we have also $f\vee g \sim x\vee y$.

We note that if  $f\in \mathcal T$ is a characteristic function such that $f \sim x \in M$, $f \le \hat a$ for some $a \in \mathcal I(M)$, then $f=f\oplus f \sim x\oplus x = x$, so that $x$ is an idempotent of $M$.

Let $f \in \mathcal T$, $f\sim x$, $f\le b$ for some characteristic function $b\in \mathcal T$. Then there is a unique idempotent $a \in \mathcal I(M)$ such that $b\sim a$, in addition, $x\le a$. Then we have $\widehat{\lambda_a(x)}=\hat a - \hat x$, and
$$
N((b-f)-(\widehat{\lambda_a(x)}))= N((b-f)-(\hat a- \hat x))=N((b-\hat a)-(f-\hat x))\subseteq N(b-\hat a) \cup N(f-\hat x),
$$
which is a meager set. Hence,
\begin{equation}\label{eq:dif}
\lambda_b(f)=b-f \sim \lambda_a(x).
\end{equation}

We note that if $f,g \in \mathcal T$, and if $a$ is an idempotent of $M$ such that $f,g \le \hat a$, then $1-f, f\vee g, f\oplus g$ are dominated by $\hat a$. Consequently, $\mathcal T$ is an $EMV$-clan.

\vspace{2mm}
\noindent {\it Claim 2.
The set $\mathcal T$ is closed under pointwise limits of non-decreasing sequences from $\mathcal T$.}
\vspace{1mm}

Let $\{f_n\}_n$ be a sequence of non-decreasing functions from
$\mathcal T$. Choose $x_n \in M$ such that $f_n \sim x_n$ for each $n\ge 1$.
Since $f_n = f_1 \vee \cdots \vee f_n \sim x_1\vee \cdots \vee x_n$ for each $n\ge 1$, we have $x_n \le x_{n+1}$. Denote $f = \lim_n f_n,$ $x = \bigvee_{n=1}^\infty x_n$, and
$ b_0 = \lim_n \hat x_n.$ Then $x \in M$.  It is easy to see that there is an idempotent $a$ such that $x,x_1,\ldots \le a$ and $f_1,f_a\le \hat a$.

We have
$$
N(f - \hat x) \subseteq N(f - b_0) \cup N(\hat x -  b_0)
$$
and $N(f - b_0) = \{s \mid f(s) <  b_0(s)\} \cup
\{s \mid\  b_0(s) < f(s)\}$.

If $s \in  \{s \mid\ f(s) < b_0(s)\},$ then
there is an integer $n \ge 1$ such that $f(s) < \hat
x_n(s) \le b_0(s)$. Hence, $f_n(s) \le
f(s) < \hat x_n(s) \le b_0(s)$ so that $s
\in \{s \mid\ f_n(s) < \hat x_n(s)\}$.

Similarly we can prove that if $s\in \{s\mid b_0(s)
< f(s)\}$, then there is an integer $n \ge 1$ such that
$s\in \{s \mid\ \hat b_n(s) < f_n(s)\}$.

The last two cases imply
$$ N(f - b_0) \subseteq \bigcup_{n=1}^\infty N(\hat x_n - f_n)
$$
which is a meager set.

Now it is necessary to show that $N(\hat x-b_0)$ is a meager set. We have
\begin{eqnarray*}
N(\hat x-b_0)&=& \big(N(\hat x-b_0)\cap \{s\mid s(x)>0\}\big)\cup \big(N(\hat x-b_0)\cap \{s\mid s(x)=0\}\big)=N(\hat x-b_0)\cap S(x)\\
&=&  \big(N(\hat x-b_0) \cap (S(x)\setminus \bigcup_n S(x_n))\big) \cup \big(N(\hat x-b_0) \cap (S(x)\cap \bigcup_n S(x_n))\big).
\end{eqnarray*}
By Proposition \ref{pr:Bel15}, we have $N(\hat x-b_0) \cap (S(x)\setminus \bigcup_n S(x_n))$ is a meager set. Therefore, it is necessary to prove that $N:=N(\hat x-b_0) \cap (S(x)\cap \bigcup_n S(x_n))=N(\hat x-b_0) \cap\bigcup_n S(x_n)$ is a meager set.

To prove it, take an arbitrary open non-empty set $O$ in $\mathcal{SM}(M)$. Then there is an ideal $I$ of $M$ such that $O=\{s\in \mathcal{SM}(M)\mid I \varsubsetneq \Ker(s)\}$. The ideal $I$ contains a non-zero element $z\in I$. There is an idempotent $a\in \mathcal I(M)$ such that $x,z\le a$. We note that in such a case, $a_0(x)\le a$, where $a_0(x)$ is the least upper idempotent of $x$ defined in Theorem \ref{th:LS2}. The restriction of any state-morphism $s \in S(a)$ onto the $MV$-algebra $M_{a}=[0,a]$ is a state-morphism on $M_{a}$; we denote the set of those restrictions by $S_0(a)$. Then $S_0(a)\subseteq \mathcal{SM}(M_{a})$. It is clear that $M_{a}$ is a $\sigma$-complete $MV$-algebra, whence $x,x_1,\ldots \in M_{a}$ and $x$ is the least upper bound of $\{x_n\}$ taken in the $MV$-algebra $M_{a}$. By the proof of \cite[Thm 4.1]{Dvu5},  $S_0:=\{s\in \mathcal{SM}(M_{a})\mid s(x) > \lim_n s(x_n)\}$ is a meager set in the weak topology of $\mathcal{SM}(M_{a})$. Then $\{s_{|M_{a}}\mid s \in S(x) \cap N(\hat x -b_0)\}\subseteq S_0$ is also a meager set of $\mathcal{SM}(M_{a})$.

The element $z$ belongs to $[0,a]$, and let $I_a=I\cap[0,a]$. Clearly $I_a$ is an ideal of $M_a$ containing $z$,  and let $O_a(I_a)=\{s\in \mathcal{SM}(M_a)\mid I_a\varsubsetneq \Ker(s)\}$. Then $O_a(I_a)$ is a non-zero open set of $\mathcal{SM}(M_a)$. Therefore, there is an element $0<y\in M_a$ such that $S_a(y)=\{s \in \mathcal{SM}(M_a)\mid s(y)>0\}\subseteq O_a(I_a)$ and it has the empty intersection with $S_0$. Define $S(y)=\{\mathcal{SM}(M) \mid s(y)>0\}$.  Since $y\le a$, we have $S(y)\subseteq M(a)$. For each state-morphism $s$ on $M$, let $s_a$ be the restriction of $s$ onto $M_a$. Take $s \in S(y)$, then $s_a(y)>0$, $s_a$ is a state-morphism on $M_a$,  $s_a\in S_a(y)$, and $s_a \in O_a(I_a)$. That is, there is a non-zero element $t\in I_a$ such that $s_a(t)=0$, i.e. $s(t)=0$ for some $t \in I$ which gets $s\in O$. We have proved that $S(y)\subseteq O$. We assert $S(y)\cap S(x)\cap N(\hat x- b_0)=\emptyset$. If not, there is a state-morphism $s$ belonging to the intersection. Then $s(a)=1$ since $s \in S(y)$, so that $s_a$ is a state-morphism on $M_a$, $s_a(y)=s(y)>0$, and $\hat x(s)-b_0(s)=s_a(x)-\lim_ns_a(x_n)>0$ which is an absurd, and the intersection is empty. Therefore, the set $S(x)\cap N(\hat x- b_0)$ is a meager set.

Hence, given a non-decreasing sequence $\{f_n\}$, for the function $f$ defined by $f(s)=\sup_n f_n(s)$, $s \in \mathcal{SM}(M)$, we have $f \sim x$, where $x=\bigvee_n x_n$, and clearly $f \in \mathcal T$.

\vspace{2mm}
\noindent {\it Claim 3.
The set $\mathcal T$ is an $EMV$-tribe.}
\vspace{1mm}

Now let $\{f_n\}$ be an arbitrary sequence of functions from $\mathcal T$ such that $f_n \sim x_n \in M$. By the previous step, there is an idempotent $a\in M$ such that $x_1,x_2,\ldots \le a$ and $f_1,f_2,\ldots \le \hat a$.
Then for each $n\ge 1$, $g_n=f_1\oplus\cdots \oplus f_n=\min\{f_1+\cdots + f_n,\hat a\}\sim x_1\oplus\cdots \oplus x_n$ and it does not depends on $a$.
Then $\bigoplus_n f_n$ is a pointwise limit of the non-decreasing sequence $\{g_n\}$, that is, $\bigoplus_n f_n= \lim_n g_n$, which by Claim 2 means, $\bigoplus_n f_n \sim \bigvee_n(x_1\oplus \cdots \oplus x_n)$. In addition, $\bigoplus_n f_n \le \hat a$, so that, we have shown that $\bigoplus_n f_n\in \mathcal T$ and $\mathcal T$ is an $EMV$-tribe of fuzzy sets on $\mathcal{SM}(M)$. Since by the construction of $\mathcal T$, for each $f \in \mathcal T$, there is an idempotent $a\in \mathcal I(M)$ such that $f\le \hat a$,
Proposition \ref{pr:LS4} says that $\mathcal T$ is an $EMV$-tribe.

\vspace{2mm}
\noindent {\it Claim 4. $M$ is a $\sigma$-homomorphic image of the $EMV$-tribe
 $\mathcal T$.}
\vspace{1mm}

Define a mapping $h:\mathcal T\to M$ by $h(f)=x$ iff $f \in \mathcal T$ and $f\sim x \in M$. By the first part of the present proof, $h$ is a well-defined mapping that is surjective. It preserves $\oplus, \vee,\wedge$, and $h(0_\Omega)=0$. In addition, if $f = \bigvee_n f_n=\sup_n f_n$, then by Step 2, $f_n\sim x_n$ and $f\sim x=\bigvee_n x_n$, that is $h(f)=\bigvee_n h(f_n)$.

Now let $f\le b$, where $f\in \mathcal T$ and $b$ is a characteristic function from $\mathcal T$. There are unique elements $x\in M$ and $a\in \mathcal I(M)$ such that $f\sim x$ and $b\sim a$. Clearly, $x\le a$. Then $b = f \oplus \lambda_b(f)$, and by (\ref{eq:dif}), we have $b-f \sim \lambda_a(x)$, i.e. $h(b-f)=h(\lambda_b(f))=\lambda_a(x)$,
so that $a=h(b)=h(f)\oplus h(\lambda_b(f))= h(f) \oplus \lambda_{h(b)}(h(f))= x\oplus \lambda_a(x)$. By definition of $\lambda_{h(b)}$ in $M$, we have $\lambda_a(x)=\lambda_{h(b)}(h(f))\le h(\lambda_b(f))=\lambda_a(x)$, that is $h(\lambda_b(f))=\lambda_{h(b)}(h(f))$, which proves that $h$ is a homomorphism of $EMV$-algebras. Consequently, $h$ is a surjective $\sigma$-homomorphism as we needed.

Theorem is proved.
\end{proof}

We recall that if $\Omega$ is a non-void set, then a {\it ring} is a system $\mathcal R$ of subsets of $\Omega$ such that (i) $\emptyset \in \mathcal R$, (ii) if $A,B \in \mathcal R$, then $A\cup B, A\setminus B\in \mathcal R$. A ring $\mathcal R$ is a $\sigma$-{\it ring} if given a sequence $\{A_n\}$ of subsets from $\mathcal R$, $\bigcup_n A_n \in \mathcal R$. Clearly, every ring is an $EMV$-algebra and a generalized Boolean algebra of subsets.

We remind that due to the Stone theorem, see e.g. \cite[Thm 6.6]{LuZa}, every generalized Boolean algebra is isomorphic to some ring of subsets.

A corollary of the Loomis--Sikorski Theorem \ref{th:LS6} is the following result.

\begin{cor}\label{co:LS7}
Let $M$ be a $\sigma$-complete $EMV$-algebra. Then there are a $\sigma$-ring $\mathcal R$ of subsets of some set $\Omega\ne \emptyset$ and a surjective $\sigma$-homomorphism from $\mathcal R$ onto $\mathcal I(M)$.
\end{cor}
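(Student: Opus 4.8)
The plan is to apply the Loomis--Sikorski Theorem \ref{th:LS6} and then isolate the characteristic functions of the resulting tribe. By Theorem \ref{th:LS6}, there are a non-empty set $\Omega=\mathcal{SM}(M)$, an $EMV$-tribe $\mathcal T\subseteq[0,1]^\Omega$, and a surjective $\sigma$-homomorphism $h:\mathcal T\to M$; recall from its proof that $f\sim h(f)$, i.e.\ $N(f-\widehat{h(f)})$ is meager, for each $f\in\mathcal T$. I would set
$$\mathcal R:=\{A\subseteq\Omega\mid \chi_A\in\mathcal T\},$$
the collection of subsets whose characteristic function belongs to $\mathcal T$, equivalently the characteristic functions lying in $\mathcal T$. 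The first observation is that $h$ matches these characteristic functions with $\mathcal I(M)$: if $\chi_A\in\mathcal T$, then $\chi_A=\chi_A\oplus\chi_A$, so $h(\chi_A)=h(\chi_A)\oplus h(\chi_A)$ is an idempotent; conversely, for every $a\in\mathcal I(M)$ the function $\hat a$ is $\{0,1\}$-valued (since $s(a)\in\{0,1\}$ for each state-morphism $s$) and lies in $\widehat M\subseteq\mathcal T$ with $h(\hat a)=a$. Hence $h$ restricts to a surjection $h_0:=h|_{\mathcal R}:\mathcal R\to\mathcal I(M)$.

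Next I would verify that $\mathcal R$ is a $\sigma$-ring of subsets of $\Omega$. Since $0_\Omega=\chi_\emptyset\in\mathcal T$, we have $\emptyset\in\mathcal R$. For $A,B\in\mathcal R$ we have $\chi_{A\cup B}=\chi_A\vee\chi_B\in\mathcal T$ because $\mathcal T$ is closed under $\vee$, and $\chi_{A\setminus B}=\lambda_{\chi_A}(\chi_A\wedge\chi_B)\in\mathcal T$ because $\chi_A$ is a characteristic function of $\mathcal T$, $\chi_A\wedge\chi_B\le\chi_A$ lies in $\mathcal T$, and an $EMV$-clan is closed under $g\mapsto\chi_A-g$ for $g\le\chi_A$; both results are again characteristic functions, so $A\cup B,\,A\setminus B\in\mathcal R$. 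For a sequence $\{A_n\}\subseteq\mathcal R$, the partial unions $\chi_{A_1}\vee\cdots\vee\chi_{A_n}$ form a non-decreasing sequence of characteristic functions in $\mathcal T$, and by Claim~2 in the proof of Theorem \ref{th:LS6} (closure of $\mathcal T$ under pointwise limits of non-decreasing sequences) their pointwise supremum $\chi_{\bigcup_n A_n}$ lies in $\mathcal T$; thus $\bigcup_n A_n\in\mathcal R$ and $\mathcal R$ is a $\sigma$-ring.

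Finally I would check that $h_0$ is a $\sigma$-homomorphism of generalized Boolean algebras. That $h_0$ preserves $\emptyset$, $\cup$ and $\setminus$ is immediate from $h$ being an $EMV$-homomorphism, using $h(\chi_{A\cup B})=h(\chi_A)\vee h(\chi_B)$ and $h(\chi_{A\setminus B})=\lambda_{h(\chi_A)}(h(\chi_A)\wedge h(\chi_B))$, which are exactly the join and the relative difference of idempotents in the generalized Boolean algebra $\mathcal I(M)$ (so intersection is preserved as well, via $A\cap B=A\setminus(A\setminus B)$). For $\sigma$-additivity, if $A=\bigcup_n A_n$ in $\mathcal R$, the monotone-limit argument above gives $\chi_A\sim\bigvee_n h(\chi_{A_n})$, whence $h_0(A)=\bigvee_n h_0(A_n)$, the supremum being taken in the Dedekind $\sigma$-complete subalgebra $\mathcal I(M)$ (Proposition \ref{pr:Bel3}), where it coincides with the supremum in $M$. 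I expect the main obstacle to be precisely this countable-union step: one must ensure that the characteristic function of $\bigcup_n A_n$ stays inside $\mathcal T$ even though the individual $\chi_{A_n}$ need not be dominated by a single characteristic function of $\mathcal T$; passing to the non-decreasing sequence of finite unions and invoking the monotone-limit closure is what resolves it, and it simultaneously yields the required $\sigma$-homomorphism property.
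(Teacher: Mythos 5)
Your proposal is correct and follows essentially the same route as the paper: both restrict the $\sigma$-homomorphism $h$ of Theorem \ref{th:LS6} to the characteristic functions in $\mathcal T$, identify these with a ring $\mathcal R$ of subsets of $\Omega=\mathcal{SM}(M)$, and check closure under finite Boolean operations and countable unions (the paper phrases the countable-union step via $\bigoplus_n\chi_{A_n}=\chi_{\bigcup_n A_n}$, which is the same monotone-limit argument you give). The only cosmetic difference is that the paper first forms the set $\mathcal R_0$ of characteristic functions and then transports the structure to $\mathcal R$ via the isomorphism $A\mapsto\chi_A$, whereas you work with $\mathcal R$ directly.
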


\begin{proof}
Since $M$ is $\sigma$-complete, by Proposition \ref{pr:Bel3}, $\mathcal I(M)$ is a $\sigma$-complete subalgebra of $M$, in other words, $\mathcal I(M)$ is a $\sigma$-complete generalized Boolean algebra.

Use the system $\mathcal T$ defined in the proof of Theorem \ref{th:LS6}, that is $f \in \mathcal T$ iff there is an element $x\in M$ with $f\sim x$ and there is an idempotent $a\in M$ such that $f\le \hat a$; $\mathcal T$ is a $\sigma$-complete $EMV$-tribe of fuzzy functions on $\Omega =\mathcal{SM}(M)$. Then the mapping $h: \mathcal T \to M$ defined by $h(f)=x$ ($f\in \mathcal T$) if $f\sim x\in M$, is a surjective $\sigma$-homomorphism.

Denote by $\mathcal R_0$ the class of all characteristic functions from $ \mathcal T$. As it was proved in Theorem \ref{th:LS6}, for each $f \in \mathcal R_0$, there is a unique $x\in \mathcal I(M)$ such that $f \sim x$. If (i) $\chi_A,\chi_B \in \mathcal R_0$, then $\chi_A\vee \chi_B=\chi_A \oplus \chi_B = \chi_{A\cup B}\in \mathcal R_0$, (ii) if $\chi_A,\chi_B \in \mathcal R_0$ and $\chi_A \le \chi_ B$, then $\chi_B - \chi_A \in \mathcal R_0$, (iii) if $\chi_A,\chi_B \in \mathcal R_0$, then $\chi_A \wedge \chi_B = \chi_{A\cap B}\in \mathcal R_0$, and (iv) if $\{\chi_{A_n}\}$ is a sequence of characteristic functions from $\mathcal R_0$, then $\bigoplus_n\chi_{A_n} =\chi_A\in \mathcal R_0$, where $A =\bigcup_n A_n$.

We note here, that in Claim 2 of the proof of the Loomis--Sikorski Theorem, it was necessary to prove that $N(\hat x - b_0)$ is a meager set. We show that if the non-decreasing sequence $\{x_n\}$ of elements of $M$ with $x = \bigvee_n x_n $ and $b_0=\lim_n \hat x_n$ consists only of idempotent elements, the proof of the fact $N(\hat x - b_0)$ is meager is very easy. Indeed, if $s \in N$, there is an integer $n_0$ such that $s\in S(x_{n_0})$. Then we have  $1\ge s(x)\ge s(x_{n_0})=1$ that yields $\hat x(s)=1=b_0(s)$ and the set $N$ is empty.

Now if $h_0:\mathcal R_0 \to \mathcal I(M)$ is the restriction of the $\sigma$-homomorphism $h:\mathcal T\to M$ onto $\mathcal R_0$ we see that $h_0$ is a $\sigma$-homomorphism from $\mathcal R_0 $ onto $\mathcal I(M)$. Now let $\mathcal R=\{A \subseteq \Omega \mid \chi_A \in \mathcal R_0\}$. Then $\mathcal R_0$ is a $\sigma$-complete ring of subsets of $\Omega = \mathcal{SM}(M)$. Define a mapping $\iota: \mathcal R\to \mathcal R_0$ by $\iota(A) = \chi_A$, $A \in \mathcal R$. It is clear that $\iota$ is a $\sigma$-complete isomorphism. If we set $\phi=h_0\circ \iota: \mathcal R \to \mathcal I(M)$, then $\phi$ is a surjective $\sigma$-homomorphism from $\mathcal R$ onto the set of idempotents $\mathcal I(M)$, and the corollary is proved.
\end{proof}

We note that the last result can be found in \cite[p. 216]{Kel} using the language of $\sigma$-complete Boolean rings. Therefore, Theorem \ref{th:LS6} is a generalization of the Loomis-Sikorski Theorem for Boolean $\sigma$-algebras, see \cite{Loo,Sik}, $\sigma$-complete Boolean rings, \cite{Kel},  and $\sigma$-complete $MV$-algebras, see \cite{Mun4, Dvu5,BaWe}.

We say that an ideal $I$ of an $EMV$-algebra $M$ is $\sigma$-{\it complete} if, for each sequence $\{x_n\}$ of elements of $I$, the existence of $\bigvee_n x_n$ in $M$ implies $\bigvee_n x_n \in I$.

\begin{thm}\label{th:LS8}
Every $\sigma$-complete $EMV$-algebra $M$ without top element can be embedded into a $\sigma$-complete $MV$-algebra $N$ as its maximal ideal which is also $\sigma$-complete. Moreover, this $N$ can be represented as
$$
N=\{x\in N\mid \text{ either } x \in M \text{ or } x = \lambda_1(y) \text{ for some } y \in M\}.
$$
\end{thm}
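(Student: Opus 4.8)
The plan is to take for $N$ precisely the representing $MV$-algebra supplied by the Basic Representation Theorem (Theorem~\ref{th:embed}): since $M$ has no top element, $M$ embeds into an $MV$-algebra $N$ as a maximal ideal, and $N$ already has the desired form $N=\{x\in N\mid x\in M \text{ or } x=\lambda_1(y),\ y\in M\}$, so the representation formula needs no extra work. Thus the whole content of the statement reduces to two claims about this fixed $N$: that $N$ is $\sigma$-complete as an $MV$-algebra, and that $M$, viewed as a maximal ideal of $N$, is a $\sigma$-complete ideal in the sense defined above. Throughout I will use that, since $M$ is a downward closed maximal ideal of $N$, every element of $N\setminus M$ is of the form $\lambda_1(y)$ with $y\in M$, and every lower bound in $N$ of a family of elements of $M$ already lies in $M$.

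The main work is the $\sigma$-completeness of $N$, which I would prove by splitting a sequence according to the ideal $M$. Let $\{z_n\}\s N$ and put $P=\{n\mid z_n\in M\}$ and $Q=\{n\mid z_n\notin M\}$, writing $z_n=\lambda_1(y_n)$ with $y_n\in M$ for $n\in Q$. For the indices in $P$, $\sigma$-completeness of $M$ yields $w:=\bigvee_{n\in P}z_n\in M$ (with $w=0$ if $P=\emptyset$), and by Lemma~\ref{le:Bel2}(i) this $w$ is also the supremum of $\{z_n\mid n\in P\}$ computed in $N$. For the indices in $Q$, the Dedekind $\sigma$-completeness of $M$ gives, via Lemma~\ref{le:Bel2}(iii), an infimum $v:=\bigwedge_{n\in Q}y_n\in M$; because any lower bound of $\{y_n\}$ in $N$ lies in the ideal $M$, this $v$ is also the infimum in $N$, and since $\lambda_1$ is an order-reversing involution of the $MV$-algebra $N$ we get $\bigvee_{n\in Q}z_n=\bigvee_{n\in Q}\lambda_1(y_n)=\lambda_1(v)$ in $N$. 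Finally $\bigvee_n z_n=w\vee\lambda_1(v)$, a binary join which certainly exists in $N$; hence $N$ is $\sigma$-complete.

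It then remains to check that $M$ is a $\sigma$-complete ideal of $N$ and to assemble the statement. If $\{x_n\}$ is a sequence from $M$ whose supremum exists in $N$, then by $\sigma$-completeness of $M$ the element $\bigvee_n x_n$ already exists in $M$, and Lemma~\ref{le:Bel2}(i) shows it coincides with the supremum taken in $N$; thus the $N$-supremum lies in $M$, which is exactly the required $\sigma$-completeness of the ideal $M$. Combining this with the $\sigma$-completeness of $N$ and the representation of $N$ inherited from Theorem~\ref{th:embed} completes the argument. I expect the only delicate point to be the treatment of mixed sequences in the proof that $N$ is $\sigma$-complete: the element $1$ and, more generally, the members of $N\setminus M$ can serve as upper bounds even when no supremum lies in $M$, so the argument must pass through the infimum in $M$ of the complements $y_n$ and invoke De Morgan duality, rather than attempting to compute the supremum of the $z_n$ directly.
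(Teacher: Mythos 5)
Your proof is correct, and it follows the same overall strategy as the paper's: split the sequence according to membership in the maximal ideal $M$, compute the supremum of the $M$-part inside $M$, handle the complemented part by dualizing to an infimum in $M$, and finish with a binary join. The difference lies in the technical execution. The paper invokes Mundici's representation $N=\Gamma(G,u)$ and carries out the verifications with group addition and subtraction in the unital $\ell$-group (e.g.\ showing $y_0+x_n\le u$ implies $y_0+\bigvee_n x_n\le u$ to see that an $M$-supremum survives upper bounds from $N\setminus M$), whereas you stay entirely inside the $EMV$/$MV$ framework: you cite Lemma~\ref{le:Bel2}(i) for the fact that suprema computed in $M$ persist in $N$ (the paper's Case~(1) essentially re-proves this lemma via the $\ell$-group), you use downward closedness of the ideal $M$ to see that infima of families from $M$ taken in $M$ are also infima in $N$, and you use that $\lambda_1$ is an order-reversing involution to convert that infimum into the supremum of the complemented part. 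Your route is more economical and self-contained, reusing the paper's own Lemma~\ref{le:Bel2} rather than external $\ell$-group machinery; the paper's route is more computational but makes the interaction with the strong unit $u$ explicit. Your treatment of the $\sigma$-completeness of $M$ as an ideal of $N$ (via Lemma~\ref{le:Bel2}(i)) matches the paper's concluding remark that Case~(1) yields exactly this. The only cosmetic omission is the trivial edge case $Q=\emptyset$, which reduces to the $P$-part alone.
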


\begin{proof}
If a $\sigma$-complete $EMV$-algebra $M$ possesses a top element, then it is an $MV$-algebra, so $M$ is a $\sigma$-complete $MV$-algebra. Thus, let $M$ have no top element. According to Theorem \ref{th:embed}, there is an $MV$-algebra $N$ such that $M$ can be embedded into $N$ as its maximal ideal. Without loss of generality let us assume that $M$ is an $EMV$-subalgebra of $N$. Let $1$ be the top element of $N$. By the proof of Theorem \ref{th:embed}, every element $x\in N$ is either from $M$, or $\lambda_1(x)\in M$. Due to Mundici's result, see \cite{Mun}, there is a unital Abelian $\ell$-group $(G,u)$ such that $N=\Gamma(G,u)$ so that $1=u$. Thus let $\{x_n\}$ be an arbitrary sequence of elements of $N$.

There are three cases. (1) Every $x_n \in M$.  Then there is an element $x=\bigvee_n x_n\in M$, where the supremum $x$ is taken in the $\sigma$-complete $EMV$-algebra $M$. Thus let $x_n \le y$ for each $n$, where $y\in N$. It is enough to assume that $y=\lambda_1(y_0)$ for some $y_0 \in M$. Using the Mundici representation of $MV$-algebras by unital $\ell$-groups, we obtain $x_n\le \lambda_1(y_0)= u-y_0$, so that $y_0+x_n\le u$, where $+$ and $-$ denote the group addition and the group subtraction, respectively, taken in the group $(G,u)$. Hence, $y_0+x_n = y_0\oplus x_n\in M$, so that there is $\bigvee_n(y_0\oplus x_n)$ in $M$, which means $y_0\oplus \bigvee_n x_n = \bigvee_n (y_0\oplus x_n)\le u$ as well as  $y_0+ \bigvee_n x_n = \bigvee_n (y_0+ x_n)= \bigvee_n (y_0\oplus x_n)\le u$. Then $\bigvee_n x_n \le u -y_0=y$ which proves $\bigvee_nx_n$ is also a supremum of $\{x_n\}$ taken in the whole $MV$-algebra $N$.

We note that for each sequence $\{z_n\}$ of elements of $M$, there is an idempotent $a\in M$ such that $z_n \le a$, so that $z=\bigwedge_n z_n$ exists in $M$ and similarly as for $\bigvee$, we can show that $z$ is also the infimum taken in the whole $N$.

Case (2), every $x_n = \lambda_1({x^0_n})=u-x^0_n$, where $x^0_n \in M$ for each $n\ge 1$. Clearly, $\bigwedge_nx_n$ exists in $M$ as well as in $N$ and they are the same. Hence, in the unital $\ell$-group as well as in the $MV$-algebra $N$, we have $u-\bigwedge_n x^0_n = \bigvee_n(u-x^0_n)= \bigvee_n x_n\in N$ which says $\bigvee_n x_n$ exists in $N$.

Case (3), the sequence $\{x_n\}$ can be divided into two sequences $\{y_i\}$ and $\{z_m\}$, where $y_i \in M$, $z_m = \lambda_1(z^0_m)$ with $z^0_m \in M$ for each $n$ and $m$. By cases (1) and (2), $y=\bigvee_i y_i$ and $z=\bigvee_m z_m$ are defined in $N$, so that $y\vee z$ exists in $N$ and clearly, $y\vee z=\bigvee_n x_n$.

Combining (1)--(3), we see that $N$ is a $\sigma$-complete $MV$-algebra.

From Theorem \ref{th:embed}, we conclude $M$ is a maximal ideal of $N$, and Case (1) says that $M$ is a $\sigma$-ideal of $N$.
\end{proof}

We mention that if $M$ is a $\sigma$-complete $MV$-algebra, then $\mathcal{SM}(M)$ is a basically disconnected space, see \cite[Prop 4.3]{Dvu5}. A similar result holds also for $\sigma$-complete $EMV$-algebras as it follows from the following statement.

\begin{thm}\label{th:bdisc}
Let $M$ be a $\sigma$-complete $EMV$-algebra. If $\{C_n\}$ is a sequence of compact subsets of $\mathcal{SM}(M)$ such that $A=\bigcup_n C_n$ is open, then the closure of $A$ in the weak topology of state-morphisms on $M$ is open.
\end{thm}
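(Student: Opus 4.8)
The plan is to reduce the statement to the already-known basic disconnectedness of the state-morphism space of a $\sigma$-complete $MV$-algebra, transported through the one-point compactification of Theorem~\ref{th:comp}. First I would dispose of the degenerate cases. If $M=\{0\}$ then $\mathcal{SM}(M)=\emptyset$ and there is nothing to prove. If $M$ has a top element, then $M$ is itself a $\sigma$-complete $MV$-algebra, so $\mathcal{SM}(M)$ is basically disconnected by \cite[Prop 4.3]{Dvu5}; since $A=\bigcup_n C_n$ is open and is a countable union of the compact, hence closed, sets $C_n$, it is an open $F_\sigma$ set and its closure is open by the very definition of basic disconnectedness. So from now on assume $M$ has no top element.

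By Theorem~\ref{th:LS8}, embed $M$ as a maximal ideal of a $\sigma$-complete $MV$-algebra $N=\{x\mid x\in M\text{ or }\lambda_1(x)\in M\}$. Write $X=\mathcal{SM}(M)$ and $Y=\mathcal{SM}(N)$. By \cite[Prop 4.3]{Dvu5} the space $Y$ is basically disconnected, and by Theorem~\ref{th:comp} (together with Proposition~\ref{pr:corr1}) $Y$ is the one-point compactification of $X$: the map $\phi(s)=\tilde s$ is a homeomorphism of $X$ onto the open subspace $U:=Y\setminus\{s_\infty\}=\phi(X)$. The essential observation is that $\phi(A)$ is an open $F_\sigma$ subset of the whole space $Y$. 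Indeed, $A$ is open in $X$, so $\phi(A)$ is open in $U$, and since $U$ is open in $Y$, $\phi(A)$ is open in $Y$; moreover each $\phi(C_n)$, being the image of the compact set $C_n$ under the homeomorphism $\phi$, is compact in $Y$, hence closed in the Hausdorff space $Y$, so $\phi(A)=\bigcup_n\phi(C_n)$ is an $F_\sigma$.

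Now basic disconnectedness of $Y$ applies to $\phi(A)$ and yields that $\overline{\phi(A)}$, the closure taken in $Y$, is open in $Y$. Finally I would pull this back: since $\phi$ is a homeomorphism of $X$ onto $U$, the closure of $A$ in $X$ satisfies $\phi(\overline{A})=\mathrm{cl}_U(\phi(A))=\overline{\phi(A)}\cap U$, where $\overline{\phi(A)}$ is the closure in $Y$. This set is open in $U$, being the intersection of a $Y$-open set with $U$, and hence $\overline{A}=\phi^{-1}\big(\overline{\phi(A)}\cap U\big)$ is open in $X=\mathcal{SM}(M)$, which is exactly the claim.

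The main obstacle is purely the bookkeeping of topologies across the compactification: one must be certain that compactness of each $C_n$ survives as closedness of $\phi(C_n)$ in the larger space $Y$, so that the $F_\sigma$ property is genuinely available there, and that openness of $A$ is not lost when $\phi(A)$ is viewed inside $Y$; both facts rest on $\phi$ being a homeomorphism onto the open subspace $U$, i.e. on Theorem~\ref{th:comp}. It is noteworthy that one need not decide whether $s_\infty\in\overline{\phi(A)}$, because intersecting with $U$ discards $s_\infty$ in any case, which is what lets the argument proceed without a separate estimate keeping $\overline{A}$ inside a compact set. An alternative, entirely internal route would avoid $N$ and instead cover each $C_n$ by a compact clopen $S(a_n)$ with $a_n\in\mathcal I(M)$ (Theorem~\ref{th:Bel11}), set $a=\bigvee_n a_n\in\mathcal I(M)$ so that $\overline{A}\subseteq S(a)$ is compact by Proposition~\ref{pr:LS1}, and then run the $MV$-algebra argument inside the $\sigma$-complete $MV$-algebra $[0,a]$; this, however, requires identifying $S(a)$ with $\mathcal{SM}([0,a])$, whose surjectivity is the delicate point, so I prefer the compactification route above.
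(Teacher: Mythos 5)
Your proof is correct and follows essentially the same route as the paper: reduce to the $MV$-algebra $N$ of Theorem \ref{th:LS8}, use that $\mathcal{SM}(N)$ is the one-point compactification of $\mathcal{SM}(M)$ (Theorem \ref{th:comp}) to see that $\phi(A)$ is an open $F_\sigma$ in the basically disconnected space $\mathcal{SM}(N)$, and pull the open closure back. Your only departure is cosmetic: the subspace-closure identity $\phi(\overline{A})=\overline{\phi(A)}\cap\phi(\mathcal{SM}(M))$ lets you skip the paper's case distinction on whether $s_\infty$ lies in $\overline{\phi(A)}$.
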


\begin{proof}
If $M$ has a top element, the statement follows from \cite[Prop 4.3]{Dvu5}.
Thus let $M$ have no top element and let $A=\bigcup_n C_n$ be open, where each $C_n$ is compact. Let $N$ be an $MV$-algebra representing the $EMV$-algebra given by Theorem \ref{th:embed}.  According to Theorem \ref{th:comp}, the state-morphism space $\mathcal{SM}(N)$ is the one-point compactification of $\mathcal{SM}(M)$, and the mapping $\phi: \mathcal{SM}(M)\to \mathcal{SM}(N)$ defined by $\phi(s)=\tilde s$, $s\in \mathcal{SM}(M)$, given by (\ref{th:embed}), is a continuous embedding of $\mathcal{SM}(M)$ into $\mathcal{SM}(N)$. Then $\mathcal{SM}(N)= \phi(\mathcal{SM}(M))\cup \{s_\infty\}$. We have $\phi(A)=\bigcup_n \phi(C_n)$. Since $s_\infty \notin \phi(A)$, we see that $\phi(A)$ is open and every $\phi(C_n)$ is closed in the weak topology of state-morphisms on $N$. Since $N$ is by Theorem \ref{th:LS8} a $\sigma$-complete $MV$-algebra, the state-morphism space $\mathcal{SM}(N)$ is basically disconnected. That is, $\overline{\phi(A)}^N$ is an open set, where $\overline{K}^N$ and $\overline{K}^M$ denote the closure of $K$ in the weak topology on $\mathcal{SM}(N)$ and $M$, respectively.
If $s_\infty \notin \overline{\phi(A)}^N$, then $\phi^{-1}(\overline{\phi(A)}^N\cap \phi(X))= \overline{A}^M$, where $X=\mathcal{SM}(M)$, which means that $\overline{A}^M$ is open. If $s_\infty \in \overline{\phi(A)}^N$, then $\overline{\phi(A)}^N=\phi(\overline{A}^M)\cup \{s_\infty\}$, so that $X\setminus \phi^{-1}(\overline{\phi(A)}^N)= X\setminus \overline{A}^M$ is compact, and $\overline{A}^M$ is open.
\end{proof}

Now we present another proof of the Loomis--Sikorski theorem for $\sigma$-complete $EMV$-algebras which is based on Theorem \ref{th:LS8} and on the Loomis--Sikorski representation of $\sigma$-complete $MV$-algebras, see e.g. \cite{Dvu5,Mun4}. We note that the proof from Theorem \ref{th:LS6} gives an interesting and more instructive look into important topological methods which follow from the hull-kernel topology of maximal ideals and the weak topology of state-morphisms than a simple application of the Loomis--Sikorski theorem for $\sigma$-complete $MV$-algebras.

\begin{thm}\label{th:LS9}{\bf (Loomis--Sikorski Theorem 1)}
Let $M$ be a $\sigma$-complete $EMV$-algebra. Then there are an $EMV$-tribe $\mathcal T$ of fuzzy sets on some $\Omega \ne \emptyset$ and a surjective $\sigma$-homomorphism $h$ of $EMV$-algebras from $\mathcal T$ onto $M$.
\end{thm}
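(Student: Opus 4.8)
The plan is to derive the statement from the classical Loomis--Sikorski theorem for $\sigma$-complete $MV$-algebras, using the $\sigma$-complete enveloping $MV$-algebra furnished by Theorem \ref{th:LS8}. If $M$ has a top element, then $M$ is itself a $\sigma$-complete $MV$-algebra, and the representation of \cite{Dvu5,Mun4} already yields a tribe $\mathcal S$ on some $\Omega$ and a surjective $\sigma$-homomorphism onto $M$; since a tribe contains $1_\Omega$ and is therefore in particular an $EMV$-tribe, we are done. So I would assume $M$ has no top element and embed it, by Theorem \ref{th:LS8}, as a maximal $\sigma$-ideal of a $\sigma$-complete $MV$-algebra $N=\{x\mid x\in M \text{ or } \lambda_1(x)\in M\}$.

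Applying the Loomis--Sikorski theorem to the $\sigma$-complete $MV$-algebra $N$ gives a tribe $\mathcal S\subseteq[0,1]^{\Omega'}$ over $\Omega'=\mathcal{SM}(N)$ and a surjective $\sigma$-homomorphism $g\colon\mathcal S\to N$ with $\widehat N\subseteq\mathcal S$, $g(\hat x)=x$, whose fibres are exactly the classes of functions differing on a meager set. By Theorem \ref{th:comp}, $\Omega'=\phi(\mathcal{SM}(M))\cup\{s_\infty\}$ is the one-point compactification of $\Omega:=\mathcal{SM}(M)$ and $M=\Ker(s_\infty)$, so $\hat x(s_\infty)=s_\infty(x)=0$ for every $x\in M$. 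The key move is to pass from $\Omega'$ to $\Omega$ by restriction: I would set
$$\mathcal T:=\{\, f|_\Omega \mid f\in\mathcal S,\ g(f)\in M,\ f\le\hat a \text{ for some } a\in\mathcal I(M)\,\},$$
and define $h(f|_\Omega):=g(f)$.

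First I would check that $h$ is well defined: if $f|_\Omega=f'|_\Omega$, then $f$ and $f'$ differ at most at the single non-isolated point $s_\infty$, a nowhere dense and hence meager set, so $g(f)=g(f')$. Surjectivity is immediate since $\hat x|_\Omega\in\mathcal T$ and $h(\hat x|_\Omega)=x$ for each $x\in M$. Then I would verify the four axioms of Definition \ref{de:tribe}: (i) follows from $g(0)=0$; the characteristic functions of $\mathcal T$ are precisely the $\hat a|_\Omega$ with $a\in\mathcal I(M)$, and (ii)--(iii) follow by transporting the corresponding operations of the tribe $\mathcal S$ through $g$, using that $M$ is downward closed to keep the (bounded) countable joins inside $M$. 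Crucially, axiom (iv) holds exactly because $s_\infty$ was deleted: for $s\in\Omega=\mathcal{SM}(M)$ choose $x\in M$ with $s(x)=1$, so $\hat x|_\Omega\in\mathcal T$ and $\hat x|_\Omega(s)=1$. Finally, that $h$ is a $\sigma$-homomorphism of $EMV$-algebras is inherited from $g$: it preserves $\oplus,\vee,\wedge,0$ and the relative complements $\lambda_{\hat a|_\Omega}$, and for an increasing sequence with join in $\mathcal T$ one lifts to representatives in $\mathcal S$ bounded by a common $\hat a$ and applies the $\sigma$-homomorphism property of $g$, noting that bounded increasing joins are computed pointwise on both sides.

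The main obstacle is the point $s_\infty$ at infinity. Unlike in the $MV$-algebra case, the natural tribe lives over $\mathcal{SM}(N)$, and there no admissible function dominated by some $\hat a$, $a\in\mathcal I(M)$, can attain the value $1$ at $s_\infty$, so axiom (iv) of an $EMV$-tribe would fail on $\mathcal{SM}(N)$. The whole argument hinges on replacing $\Omega'=\mathcal{SM}(N)$ by $\Omega=\mathcal{SM}(M)$ via restriction, and on checking, through the Baire property of $\mathcal{SM}(N)$ (Corollary \ref{co:Baire}), that removing the single nowhere-dense point $s_\infty$ does not disturb the meager-equivalence underlying $g$, so that $h$ remains well defined and a $\sigma$-homomorphism.
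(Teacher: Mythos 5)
Your proof is correct and follows essentially the same route as the paper's own proof of Theorem \ref{th:LS9}: embed $M$ as a maximal $\sigma$-ideal of the $\sigma$-complete $MV$-algebra $N$ furnished by Theorem \ref{th:LS8}, apply the classical Loomis--Sikorski theorem to $N$, and cut the resulting tribe down to an $EMV$-tribe that maps onto $M$. The only divergence is your deletion of the point $s_\infty$: the paper keeps the full base set and only requires the dominating characteristic function $a\in\mathcal T_0$ to satisfy $h_0(a)\in\mathcal I(M)$ (such an $a$ may take the value $1$ at the non-isolated, hence nowhere dense, point $s_\infty$, which is how axiom (iv) survives there), whereas your restriction to $\Omega=\mathcal{SM}(M)$ makes axiom (iv) immediate at the cost of the extra, easy, well-definedness check for $h$.
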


\begin{proof}
Let $M$ be a proper $\sigma$-complete $EMV$-algebra. According Theorem \ref{th:LS8}, $M$ can be embedded into a $\sigma$-complete $MV$-algebra $N$ as its maximal ideal which is also $\sigma$-complete. Without loss of generality, we can assume that $M$ is an $EMV$-subalgebra of $N$, and every element $x$ of $N$ is either from $M$ or $\lambda_1(x)$ is from $M$. Using the famous Mundici representation of $MV$-algebras by unital $\ell$-groups, there is a unital Abelian $\ell$-group $(G,u)$ such that $N=\Gamma(G,u)$. Hence, if $x\le a\in \mathcal I(M)$, then $\lambda_a(x)=a-x$, where $-$ is the subtraction taken from the $\ell$-group $G$.

By \cite[Thm 5.1]{Dvu5}, there are a tribe $\mathcal T_0$ of fuzzy sets of some set $\Omega\ne \emptyset$ and a $\sigma$-homomorphism of $MV$-algebras $h_0$ from $\mathcal T_0$ onto $N$. We note that if $\{f_n\}$ is a sequence of functions from $\mathcal T_0$ such that there is a characteristic function $a\in \mathcal T_0$ with $f_n(\omega)\le a(\omega)$ for each $\omega \in \Omega$ and each integer $n$, then
$$
\min\{\sum_n f_n(\omega),a(\omega)\}=\min\{\sum_n f_n(\omega),1\},\quad \omega \in \Omega.
$$
This statement follows the same proof of an analogical equality from the proof of Proposition \ref{pr:LS4}. Therefore, $h_0(f\oplus g)=h_0(f)\oplus h_0(g)$. Let $f\in \mathcal T_0$ and assume that $a$ is a characteristic function from $\mathcal T_0$ such that $f \le a$. Then $\lambda_a(f)=a-f \in \mathcal T_0$ and $a = f+ (a-f)=f \oplus (a-f)$ which means $h_0(a)=h_0(f)\oplus h_0(a-f)= h_0(f)+ (h_0(a)-h_0(f))= h_0(f)+\lambda_{h_0(a)}(h_0(f))= h_0(f)\oplus\lambda_{h_0(a)}(h_0(f))$, where $+$ and $-$ are group addition and subtraction, respectively, taken in the group $G$. In other words, we have established that $h_0$ is also a homomorphism of $EMV$-algebras.

Denote by $\mathcal T$ the set of functions $f\in \mathcal T_0$ such that (1) there is $x \in M$ with $h_0(f)=x$, and (2) there is a characteristic function $a\in \mathcal T_0$ such that $f\le a$ and $h_0(f) \in \mathcal I(M)$. We assert that $\mathcal T$ is an $EMV$-tribe of fuzzy sets. Indeed, if $f,a\in \mathcal T$, where $f\le a$ and $a$ is a characteristic function, then $h_0(f)=x$, $b:=h_0(a)$ is an idempotent of $M$, and $x\le a$. Then $a-f \in \mathcal T_0$ and $a-f \le a$, and using a fact that $h_0$ is a homomorphism of $EMV$-algebras,  $a= f+(a-f)= a\oplus (a-f)$ implies $h_0(a-f)= \lambda_{h_0(a)}(f)\in \mathcal T$, i.e. $h_0(a-f)=\lambda_b(x)\in M$ which means $a-f \in \mathcal T$. Clearly $f,g \in \mathcal T$ implies $f\oplus g\in \mathcal T$, $f\vee g=\max\{f,g\}, f\wedge g = \min\{f,g\}\in \mathcal T$, whence, $\mathcal T$ is an $EMV$-tribe.

Now let $\{f_n\}$ be a sequence of functions from $\mathcal T$. Since $\mathcal T$ is closed under $\vee = \max$, we can assume that $\{f_n\}$ is non-decreasing. For each $n$, there is a characteristic function $a_n\in \mathcal T_0$ such that $f_n \le a_n$. We can choose $\{a_n\}$ to be also non-decreasing. Assume $h_0(f_n)=x_n \in M$ and $h_0(a_n)=b_n\in \mathcal I(M)$. Then $x=\bigvee_n x_n \in M$ and $b=\bigvee_nb_n \in \mathcal I(M)$. Define
$$
f(\omega)=\lim_n f_n(\omega),\quad a(\omega)=\lim_n a_n(\omega), \quad \omega \in \Omega.
$$
Then $a$ is a characteristic function with $f\le a$, and $h_0(a)=h_0(\bigvee_n a_n)=b$, $h_0(f)=x$ and $f \le a$, so that $a,f\in \mathcal T$.

Now let $\{f_n\}$ be a sequence of arbitrary functions from $\mathcal T$ and let each $f_n$ is dominated by a characteristic function $a\in \mathcal T$. Then $g_n:=f_1\oplus\cdots \oplus f_n = \min\{f_1+\cdots+f_n,a\} \in \mathcal T$ for each $n\ge 1$, $f=\lim_n g_n \in \mathcal T$, and $f=\min\{\sum_n f_n,a\}$. Consequently, $\mathcal T$ is an $EMV$-tribe of fuzzy functions.

Finally, if $h$ is the restriction of $h_0$ onto $\mathcal T$, then $h$ is a $\sigma$-homomorphism of $EMV$-algebras from $\mathcal T$ onto $M$ which completes the proof of the theorem.
\end{proof}

\section{Conclusion}

The main aim of the paper was to formulate and prove a variant of the Loomis--Sikorski theorem for $\sigma$-complete $EMV$-algebras. To do it, we have used some topological methods. The main complication is that an $EMV$-algebra does not possess a top element, in general. We have introduced the weak topology of state-morphisms and the hull-kernel topology of maximal ideals. We have shown that these spaces are homeomorphic, Theorem \ref{th:Bel9}, and they are compact iff the $EMV$-algebra possesses a top element. In general, these space are locally compact, completely regular and Hausdorff, Theorem \ref{th:Bel11}, and due to Corollary \ref{co:Baire}, they are Baire spaces. Nevertheless that an $EMV$-algebra $M$ does not possess a top element, due to the Basic Representation Theorem, it can be embedded into an $MV$-algebra $N$ as its maximal ideal and every element of $N$ either belongs to $M$ or is a complement of some element of $M$. Therefore, the one-point compactification of the state-morphisms space is homeomorphic to the state-morphism space of $N$, a similar result holds for the set of maximal ideals, Theorem \ref{th:comp}. The main result of the paper is the Loomis--Sikorski Theorem for $\sigma$-complete $EMV$-algebras, Theorem \ref{th:LS6} which says that every $\sigma$-complete $EMV$-algebra is a $\sigma$-epimorphic image of some $\sigma$-complete $EMV$-tribe which is a $\sigma$-complete $EMV$-algebra of fuzzy sets where all $EMV$-operations are defined by points. We have presented two proofs of the Loomis--Sikorski theorem, see also Theorem \ref{th:LS9}.

The presented paper enriches the class of \L ukasiewicz like algebraic structures where the top element is not assumed.


\end{document}